\renewcommand{\mathbf}{\mathbold}
\numberwithin{equation}{section}
\newtheorem{Theorem}[equation]{Theorem}
\newtheorem{Proposition}[equation]{Proposition}
\newtheorem{Lemma}[equation]{Lemma}
\newtheorem{Corollary}[equation]{Corollary}
\newtheorem{Conjecture}[equation]{Conjecture}
\newtheorem{Assumption}[equation]{Assumption}
\theoremstyle{definition}
\newtheorem{Remark}[equation]{Remark}
\newtheorem{eg}[equation]{Example}
\newtheorem{Definition}[equation]{Definition}
\newcommand{\bA}{\mathbf{A}}
\newcommand{\bB}{\mathbf{B}}
\newcommand{\cC}{\mathcal{C}}
\newcommand{\sD}{\mathscr{D}}
\newcommand{\bG}{\mathbf{G}}
\newcommand{\cH}{\mathcal{H}}
\newcommand{\cI}{\mathcal{I}}
\newcommand{\sI}{\mathscr{I}}
\newcommand{\cK}{\mathcal{K}}
\newcommand{\bN}{\mathbf{N}}
\newcommand{\cN}{\mathcal{N}}
\newcommand{\cO}{\mathcal{O}}
\newcommand{\cT}{\mathcal{T}}
\newcommand{\bU}{\mathbf{U}}
\newcommand{\cU}{\mathcal{U}}
\newcommand{\cW}{\mathcal{W}}
\newcommand{\fc}{\mathfrak{c}}
\renewcommand{\AA}{\mathbb{A}}
\newcommand{\CC}{\mathbb{C}}
\newcommand{\GG}{\mathbb{G}}
\newcommand{\HH}{\mathbb{H}}
\newcommand{\QQ}{\mathbb{Q}}
\newcommand{\RR}{\mathbb{R}}
\newcommand{\ZZ}{\mathbb{Z}}
\renewcommand{\phi}{\varphi}
\renewcommand{\emptyset}{\varnothing}
\newcommand{\eps}{\varepsilon}
\renewcommand{\tilde}[1]{\widetilde{#1}}
\def\Ddots{\mathinner{\mkern1mu\raise\p@
\vbox{\kern7\p@\hbox{.}}\mkern2mu
\raise4\p@\hbox{.}\mkern2mu\raise7\p@\hbox{.}\mkern1mu}}
\DeclareMathOperator{\sgn}{sgn}
\newcommand{\SL}{\mathbf{SL}}
\newcommand{\suchthat}{\mid}
\newcommand{\textif}{\text{ if }}
\newcommand{\textand}{\text{ }\mathrm{and}\text{ }}
\newcommand{\textor}{\text{ or }}
\newcommand{\Wv}{W^\mathrm{v}}
\newcommand{\kk}{\Bbbk}
\newcommand{\TitsCone}{\cT}
\newcommand{\WTits}{W_{\TitsCone}}
\begin{document}
\newcommand*\circled[1]{\tikz[baseline=(char.base)]{
            \node[shape=circle,draw,inner sep=2pt] (char) {#1};}}
\newcommand*\squared[1]{\tikz[baseline=(char.base)]{
            \node[shape=rectangle,draw,inner sep=2pt] (char) {#1};}}

\title{Double-affine Kazhdan-Lusztig polynomials via masures}
\author{Dinakar Muthiah}
\address{Kavli Institute for the Physics and Mathematics of the Universe (WPI), The University of Tokyo Institutes for Advanced Study, The University of Tokyo, Kashiwa, Chiba 277-8583, Japan}
\email{dinakar.muthiah@ipmu.jp}
\maketitle

\begin{abstract}

Masures (previously also known as hovels) are a generalization of the theory of affine buildings for arbitrary $p$-adic Kac-Moody groups. Gaussent and Rousseau invented masures to compute the Satake transform for $p$-adic Kac-Moody groups. Their answer is given as a sum over Hecke paths, which are certain piecewise linear paths.

Guided by their method we give a definition of double affine Kazhdan-Lusztig $R$-polynomials as a sum over piecewise linear paths that we call $I_\infty$-Hecke paths. Remarkably, the notion of $I_\infty$-Hecke path, which arises from masure theoretic considerations, is closely related to chains in the double affine Bruhat order. Our main result is that there are finitely many $I_\infty$-Hecke paths in untwisted affine ADE type. This implies that $R$-polynomials are well-defined in this case. This finiteness result follows from earlier known finiteness results for the double affine Bruhat order. Combined with other results on the double affine Bruhat order, we now have all the ingredients to define double affine Kazhdan-Lusztig $P$-polynomials.
\end{abstract}

\section{Introduction}

Let $\cK = \kk((\pi))$ be the Laurent series field over a finite field $\kk$, let $\cO = \kk[[\pi]]$ be its ring of integers, and let $\bG$ be a split reductive group. The group $G = \bG(\cK)$ is an an algebraic version of the $\bG$ loop group. One may view $G$ from a \emph{$p$-adic perspective} or a complementary \emph{geometric perspective}.

Under the $p$-adic perspective, $G$ is naturally a locally compact group and therefore has a Haar measure. One has the maximal compact subgroup $K=\bG(\cO)$ and the Iwahori subgroup $I$. Corresponding to these subgroups are the spherical Hecke algebra $\cH_K$, consisting of $K$-biinvariant functions on $G$, and the Iwahori-Hecke algebra $\cH_I$, consisting of $I$-biinvariant functions. Multiplication in these Hecke algebras is given by convolution with respect to the Haar measure.  The structure and representation theory of both algebras is well understood. We mention that this $p$-adic perspective makes equal sense when we replace $\cK$ by an arbitrary non-archimedean local field like $p$-adic numbers $\QQ_p$.

In contrast, the geometric perspective is limited to the case of $\cK = \kk((\pi))$. With this restriction one may consider $\bG(\cK)$ as an infinite-dimensional algebro-geometric object over $\kk$. Two quotients, the affine Grassmannian $\bG(\cK)/\bG(\cO)$ and the affine flag variety $\bG(\cK)/I$, have especially rich geometry. Perverse sheaves on these spaces and, in particular, the intersection cohomologies of Schubert subvarieties play a key role.

These perspectives complement one another and are linked precisely by Grothendieck's sheaf-function correspondence. For example, functions with values given by Kazhdan-Lusztig polynomials correspond to the intersection cohomology sheaves of Schubert varieties.

\subsection{The Kac-Moody case}

It is natural to extend the study of $G = \bG(\cK)$ to the case when $\bG$ is an infinite dimensional Kac-Moody group. The past decade has brought much progress in understanding $G = \bG(\kk((\pi)))$ from the $p$-adic perspective.

Many difficulties do arise. In the Kac-Moody case the group $G = \bG(\cK)$ is no longer locally compact, so there is no Haar measure. Therefore, even constructing the spherical Hecke algebra is a delicate issue. Nonetheless, Braverman and Kazhdan \cite{Braverman-Kazhdan} and independently Gaussent and Rousseau \cite{Gaussent-Rousseau-2014} have constructed the spherical Hecke algebra and established the Satake isomorphism.

Additionally, Braverman, Kazhdan, and Patnaik \cite{Braverman-Kazhdan-Patnaik} and independently Bardy-Panse, Gaussent, and Rousseau \cite{BardyPanse-Gaussent-Rousseau-2016} have constructed the Iwahori-Hecke algebra and established its Bernstein-Lusztig presentation. One subtlety in the Kac-Moody case is that the Cartan decomposition fails on the entire group, and therefore one must restrict attention to a subsemigroup $G^+ \subset G$ where it holds.  

The most interesting case is when $\bG$ is an untwisted affine Kac-Moody group. In this case, the Bernstein-Lusztig presentation establishes that the Iwahori-Hecke algebra is a slight modification of Cherednik's double affine Hecke algebra. We will mostly restrict attention to this untwisted affine case, and will therefore use the prefix ``double affine'' to describe objects in this setting.

\subsubsection{The geometric perspective}

Our primary interest is to further understand the second geometric perspective of Kac-Moody loop groups through a better understanding of double affine Schubert varieties. Presently however, very little is understood about Kac-Moody loop groups from the geometric point of view. 
In contrast, the $p$-adic perspective is better understood because the theorems one proves are natural generalizations of those in the reductive case. However, in the reductive case the geometric perspective crucially uses the coincidence that the loop group of a reductive group is itself a Kac-Moody group. For example this coincidence is responsible for the fact that affine Weyl groups are Coxeter groups. This coincidence is no longer available for Kac-Moody loop groups.

Nonetheless, there are many indications that a rich geometric theory is hidden in $G^+$. As a first approximation, we can consider the quotients $G^+/K$ and $G^+/I$ as the $\kk$-points of the double affine Grassmannian and the double affine flag variety. One promising sign is that $K$ and $I$ orbits on these spaces are indexed by discrete sets. This is directly analogous to the reductive case. We now explain this for the case of $I$-orbits on $G^+/I$.

Let $\Wv$ be the Weyl group of $\bG$, and let $\cT$ be the (integral) Tits cone. One may form the semi-direct product $\WTits = \cT \rtimes \Wv$. We write $\mu \in \cT$ multiplicatively as $\pi^\mu$, and for each $\pi^\mu w  \in \WTits $ we have a corresponding $I$-double coset $I \pi^\mu w I$. 
Braverman, Kazhdan, and Patnaik \cite{Braverman-Kazhdan-Patnaik} have established the following decomposition:
\begin{align}
  \label{eq:132}
 G^+ = \bigsqcup_{\pi^\mu w \in \WTits} I \pi^\mu w I 
\end{align}
In analogy with the reductive case, we call the subsets $I \pi^\mu w I/I \subset G^+/I$ \emph{double affine Schubert cells}. Our goal is to understand \emph{double affine Schubert varieties}, which should be the closures of $I \pi^\mu w I/I$ in $G^+/I$. In the reductive case, the closure is exactly controlled by the affine Bruhat order on the affine Weyl group. In the Kac-Moody case, $\WTits$ is not a Coxeter group, so there is a priori no obvious candidate for the closure order. Nonetheless, Braverman, Kazhdan, and Patnaik \cite{Braverman-Kazhdan-Patnaik} have given a definition of double affine Bruhat order that should play the role of the closure order.

As a first milestone in understanding double affine Schubert varieties, one would like to have a good understanding of double affine Kazhdan-Lusztig polynomials, which give their intersection cohomology stalks. Our goal in this paper is to establish a definition of these polynomials.

\subsubsection{Prior work}
In \cite{Muthiah-2018}, we established two results toward the above goal. First, we proved that the structure coefficients of the Iwahori-Hecke algebra are polynomials in $q = |\kk|$; this was independently obtained in \cite{BardyPanse-Gaussent-Rousseau-2016}. This polynomiality is essential for making sense of the substitution $q \mapsto q^{-1}$ that plays a central role in Kazhdan-Lusztig theory. Second, we proved a conjecture of Braverman, Kazhdan, and Patnaik by showing that the double affine Bruhat order on $\cT \rtimes \Wv$ is a partial order. In \cite{Muthiah-Orr-2019}, with Orr, we show that $\cT \rtimes \Wv$ has a well-behaved length function that controls the double affine Bruhat order. Finally, Welch has given a finer description of covers in the double affine Bruhat order in her PhD thesis \cite{Welch-2019}. These results play a key role in this paper.

\subsection{The present work}

The remaining ingredient for understanding Kazhdan-Lusztig polynomials is the Kazhdan-Lusztig involution. In the reductive case, this is exactly the data of the so called Kazhdan-Lusztig $R$-polynomials. It is known in the reductive case that $R$-polynomials are given by counting points in the intersections of Schubert cells and opposite Schubert cells. Let $I_\infty$ be the subgroup of $\bG(\kk[\pi^{-1}])$ consisting of elements that lie in the negative Borel $\bB^-(\kk)$ modulo $\pi^{-1}$. Opposite double affine Schubert cells are orbits of $I_\infty$ on $G^+/I$.
For each pair $\pi^\nu v, \pi^\mu w \in \WTits$, we are looking for the $R$-polynomial $R_{\pi^\nu v, \pi^\mu w} \in \ZZ[v]$ such that for any finite field $\kk$, we have
\begin{align}
  \label{eq:133}
 R_{\pi^\nu v, \pi^\mu w}(q)  =  |(I \pi^\mu w I \cap I_\infty \pi^\nu v I )/I|
\end{align}
where $q = |\kk|$.

\subsubsection{Recalling the Gaussent-Rousseau computation of the Satake transform}

Similar to establishing \eqref{eq:133} is the problem of computing
\begin{align}
  \label{eq:134}
  |(K \pi^\lambda K \cap U^-\pi^\nu  K )/K|
\end{align}
where $\lambda, \nu$ are dominant, and $U^- = \bU^-(\cK)$ where $\bU^-$ is the negative unipotent subgroup of the Kac-Moody group $\bG$. The numbers \eqref{eq:134} define the Satake transform. The problem of computing the Satake transform in the Kac-Moody setting was first investigated by Gaussent and Rousseau \cite{Gaussent-Rousseau-2008}. For their study, they developed the theory of \emph{masures} (previously also known as hovels). In the case when $\bG$ is reductive, masures recover the notion of affine buildings exactly. In the case where $\bG$ is of untwisted affine type, it is helpful to think of masures as playing the role of ``double affine buildings''. We note however that the failure of the Cartan decomposition means that masures cannot satisfy all the building axioms.

Gaussent and Rousseau reinterpret the set $K \pi^\lambda K /K$ as certain segments in the masure. Then they study the intersection of this set with $U^-\pi^\nu K /K$ via a retraction $\rho_{-\infty}$ to the standard apartment. The output of their approach is a decomposition of the set $(K \pi^\lambda K \cap U^-\pi^\nu  K )/K$ into finitely many pieces indexed by piecewise linear paths they call \emph{Hecke paths}. For clarity, we will call their notion \emph{$U^-$-Hecke paths} to distinguish from $I_\infty$-Hecke paths, which we will define in this paper.  They prove that the contribution of each $U^-$-Hecke path is given by a universal polynomial specialized at $q= |\kk|$. In particular, they give a purely combinatorial formula for $|(K \pi^\lambda K \cap U^-\pi^\nu K )/K|$. We mention that $U^-$-Hecke paths, while entirely combinatorial in nature, are quite complicated, and their combinatorics is still not fully understood.

\subsubsection{Our approach to masures}
Our goal is to imitate the above strategy in order to compute $R$-polynomials. Unfortunately, the currently available framework of masures is not on its own sufficient to solve this problem. Observe that the intersection \eqref{eq:133} only makes sense for equal characteristic fields like $\kk((\pi))$ and not for mixed characteristic fields like $\QQ_p$. However, the current axiomatic framework of masures cannot distinguish between these two cases.

So we proceed differently. Instead of using masure theoretic methods directly, we reinterpret the work of Gaussent and Rousseau in group theoretic terms. We define the notion of \emph{retraction along a subgroup} (see Proposition \ref{prop:unique-retraction-along-Q}), and then observe that $\rho_{-\infty}$ is exactly equal to $\rho_{U^-}$, which is the retraction  along the subgroup $U^-$ of $G$. In \S \ref{sec:the-Gaussent-Rousseau-computation-of-the-Satake-transform}, we review the work of Gaussent and Rousseau with an emphasis on group decompositions rather than axiomatic masure-theoretic considerations.

One of our key ideas is to reinterpret Gaussent and Rousseau's tangent building calculations in the framework of what we call \emph{Deodhar problems}. A Deodhar problem is the problem of computing the intersection of Schubert cell and a translate of an opposite Schubert cell in a Kac-Moody partial flag variety. There exists a general formula for the cardinality of such intersections originating in an algorithm first discovered by Deodhar \cite{Deodhar}.
We will see that each $U^-$-Hecke path specifies a finite number of Deodhar problems. The contribution of the $U^-$-Hecke path in the Gaussent-Rousseau formula is given by the product of the corresponding Deodhar problems.

In addition to guiding our approach to $R$-polynomials, we hope that our group-theoretic explanation of the work of Gaussent and Rousseau will make masures accessible to a wider audience. We also hope that our approach will also contribute to clarifying the algebro-geometric nature of masures. For example, it was observed by Contou-Carr\'ere \cite{Contou-Carrere} that galleries in affine buildings correspond to points in affine Bott-Samelson varieties. Presumably paths in masures should therefore correspond to points in double affine Bott-Samelson varieties.

\subsubsection{A masure theoretic definition of $R$-polynomials}

Guided by our understanding of the Gaussent-Rousseau masure-theoretic technique, we shall see that for computing $R$-polynomials, one needs to consider the retraction $\rho_{I_\infty}$ along $I_\infty$. Currently we do not construct $\rho_{I_\infty}$, which would involve establishing a generalized Birkhoff decomposition for $G^+$ and developing a theory of twinned masures. Constructing $\rho_{I_\infty}$ is an important problem that we will address in a future article with Manish Patnaik. In the current paper, we assume the existence of $\rho_{I_\infty}$ and obtain a combinatorial definition of $R$-polynomials. Whether our definition correctly computes \eqref{eq:133} is therefore conditional on $\rho_{I_\infty}$ existing and having formal properties analogous to those of $\rho_{U^-}$.
One can nonetheless use our combinatorial definition to develop the theory of double affine Kazhdan-Lusztig polynomials as we describe in \S \ref{sec:kazhdan-lusztig-polynomials}.

In the course of our computations using the conjecturally defined $\rho_{I_\infty}$, we quickly see the appearance of double affine roots and the double affine Bruhat order. We obtain a notion that we call \emph{$I_\infty$-Hecke paths}, which is intimately connected to the double affine Bruhat order. We then obtain formulas for $R$-polynomials as sum over $I_\infty$-Hecke paths. The contribution of each $I_\infty$-Hecke path is given by an explicit product of Deodhar problems in direct analogy with the case of the Satake transform.  In \S \ref{sec:computing-spherical-R-polynomials} we first consider spherical $R$-polynomials, whose computation is most analogous to the Satake transform calculation. This is the bulk of our work, and we obtain a definition of spherical $R$-polynomials (Definition \ref{def:spherical-R-poly}) as a result. Then in \S \ref{sec:iwahori-versions-of-R-polynomials} we consider Iwahori $R$-polynomials as a straightforward generalization (Definition \ref{def:iwahori-spherical-R-poly} and Definition \ref{def:iwahori-iwahori-R-poly}).

It is not a priori clear why only finitely many $I_\infty$-Hecke paths appear in the sum defining $R$-polynomials. Our main result (Theorem \ref{thm:finiteness-of-I-infty-Hecke-paths}) is that in untwisted affine ADE type the set of $I_\infty$-Hecke paths is a finite set. This crucially relies on our earlier work with Orr \cite{Muthiah-Orr-2019} and work of Welch \cite{Welch-2019} on the double affine Bruhat order. Therefore, we obtain a purely combinatorial definition of Kazhdan-Lusztig $R$-polynomials. It is apparent from this approach that the combinatorial complexity of our definition is closely related to the complexity of the double affine Bruhat order.

\subsubsection{Kazhdan-Lusztig $P$-polynomials}

Kazhdan-Lusztig $R$-polynomials are stepping stone to the real objects of interest: the Kazhdan-Lusztig $P$-polynomials that give the intersection cohomology stalks of Schubert varieties. As we explain in \S \ref{sec:the-iwahori-case}, we now have all the ingredients to run the original Kazhdan-Lusztig algorithm for computing $P$-polynomials : (1) double affine Bruhat order, (2) length function, and (3) $R$-polynomials. However, to get the positivity results of usual $P$-polynomials, we need to know that there is an affine algebraic variety whose intersection homology is given by the $P$-polynomials. We phrase the existence of this variety as Conjecture \ref{conj:existence-of-transversal-slices-to-schubert-varieties}. Geometrically, this affine algebraic variety should be thought of as a transversal slice to one double affine Schubert variety sitting inside another. 

In this approach to the $P$-polynomials, the Iwahori-Hecke algebra is not visible. In the finite and affine cases, it is known that the $R$-polynomials exactly give the matrix of the Kazhdan-Lusztig involution. In our double affine situation, our masure theoretic approach does not give us any such result. Nonetheless, we expect that if one appropriately completes the Iwahori-Hecke algebra for $G$, then the $R$-polynomials define the matrix of an algebra involution. We explain this briefly in \S \ref{sec:R-polynomials-and-the-matrix-of Kazhdan-Lusztig-involution}. We hope that such an interpretation will also clarify the complicated combinatorics of $I_\infty$-Hecke paths.

\subsubsection{Coulomb branches and affine Grassmannian slices}

In the spherical case, transversal slices to Schubert varieties are known as affine Grassmannian slices. There already exists a good candidate for double affine Grassmannian slices (and more generally Kac-Moody affine Grassmannian slices) due to the work of Braverman, Finkelberg, and Nakajima on Coulomb branches of $3d$ $\cN=4$ quiver gauge theories. In section
\ref{sec:affine-Grassmannian-slices-and-Coulomb-branches}, we briefly recall their work. We explain how this compares with our computation of a spherical $R$-polynomial in \S \ref{sec:an-sl-two-hat-example}. In particular, it appears that in the spherical case, one cannot capture the entire affine Grassmannian slice working only with minimal Kac-Moody groups. This also corresponds to some weird behavior of the double affine Bruhat order on the parabolic quotient $\WTits/\Wv$ (see \S \ref{sec:weird-phenomena-about-parabolic-quotients}). Additionally, it is known that Coulomb branches in Kac-Moody type have strange stratifications that do not appear in the reductive case. In some special cases, these strange stratifications are known to be related to various correction factors that arise in the study of $p$-adic Kac-Moody groups. A very interesting question is to understand double affine Grassmannian slices and their strange stratifications in terms of $p$-adic Kac-Moody groups.

\subsection{Acknowledgments}

I thank Alexander Braverman, Bogdan Ion, Hiraku Nakajima, Alexei Oblomkov, Daniel Orr, Manish Patnaik, Anna Pusk\'as, and Alex Weekes for helpful conversations. I am especially grateful to Nicole Bardy-Panse, St\'ephane Gaussent, Auguste H\'ebert, and Guy Rousseau for their generous explanations about the theory of masures. This work was supported by JSPS KAKENHI Grant Number JP19K14495.

\section{Recollections about masures}

Let $\bG$ be a split untwisted affine Kac-Moody group (with all the relevant auxiliary data of Borels, etc. fixed). Specifically, $\bG$ is the Tits functor corresponding to the ``minimal Kac-Moody group'' \cite{Tits}. Our discussion will apply to more general split Kac-Moody groups except at a few key points where we make use of our untwisted affine assumptions. We expect that these assumptions are not necessary.

Let $\bB$ and $\bB^-$ be the positive and negative Borel subgroups. Let $\bU$ and $\bU^-$ be the positive and negative unipotent subgroups. Let $\bA$ be the maximal torus. Recall that the standard apartment $\AA$ is the real-span of the cocharacter lattice $P$ of $\bA$. Let $\bN$ be the normalizer of the torus $\bA$. 

Let $\cK = \kk((\pi))$ be the field of Laurent series in a variable $\pi$ with $\kk$ coefficients. Let $\cO = \kk[[\pi]]$. Let $G = \bG(\cK)$ and $K = \bG(\cO)$. Let $B = \bB(\cK)$, $B^- = \bB^-(\cK)$, $U = \bU(\cK)$, and $U^- = \bU^-(\cK)$. Let $N = \bN(\cK)$ and $A_\cO = \bA(\cO)$.

The Iwahori subgroup $I = \left\{ g \in K \suchthat g \in \bB(\kk) \!\!\mod \pi \right\}$ plays the role of the double affine Borel subgroup. A central role will be played by the group $I_\infty = \left\{g \in \bG(\kk[\pi^{-1}]) \suchthat g \in \bB^-(\kk) \!\!\mod \pi^{-1} \right\}$.

Let $\Wv$ be the (vectorial) Weyl group $N/A$. Recall that $\Wv$ is a Coxeter group. We also form the quotient $N/A_\cO$ which we identify with $W_P = \Wv \ltimes P$.

Then $N$ acts on $\AA$ via its quotient $W_P$. For $w \pi^\mu \in W_P$, we have: 
\begin{align}
  \label{eq:025}
  w \pi^\mu . x = w(x) + w(\mu)
\end{align}
Notice that we have a plus sign here, which is opposite the convention in \cite[\S 3.1]{Gaussent-Rousseau-2008}. We choose this convention to reduce the many minus signs that would otherwise occur throughout our calculations.

\subsection{Fixators and the definition of the masure}

Associated to each filter $\Omega$ of subsets of $\AA$, Gaussent and Rousseau \cite{Gaussent-Rousseau-2008} define a ``parahoric'' subgroups $P_\Omega$ and  ``fixator'' subgroups $\widehat{P}_\Omega$, such that $P_\Omega \subseteq \widehat{P}_\Omega \subseteq G$. In particular, to each point $x \in \AA$, one has $\widehat{P}_x$. The masure $\sI$ is defined \cite[Definition 3.15]{Gaussent-Rousseau-2008} to be the quotient $G \times \AA / \sim$ where $\sim$ is the equivalence relation on $G \times \AA$ defined by \begin{align}
  \label{eq:2}
  (g,x) \sim (h,y)
\end{align}
when there exists $n \in N$ such that $y = n x$ and $g^{-1}hn \in \widehat{P}_x$.
Given $(g,x) \in G \times \AA$, we  write $[g,x]$ for the corresponding point in $\sI$.
We mention that the original definitions and theory from \cite{Gaussent-Rousseau-2008} includes the assumption of $\kk = \CC$, but Rousseau removes this restriction in \cite{Rousseau-2016} (see also \cite{Rousseau-2017}). 

\subsubsection{The group action and fixators}

One identifies $\AA$ as a subset of $\sI$ via the map $x \mapsto [1,x]$. The left action of $G$ on $G \times \AA$ descends to $\sI$. For each subset $S \subseteq \AA$, one can define $G_S$ to be the fixator $G$ acting on the subset $S \subset \sI$. For each filter $\Omega$ of subsets in $\AA$, one defines the fixator of $G_\Omega$ by $G_\Omega = \bigcup_{S \in \Omega} G_S$. It is not immediately clear whether $G_\Omega$ is equal to $\widehat{P_\Omega}$. This is true (as well are other good properties) when $\Omega$ has so called ``good fixator'' \cite[Definition 4.1]{Gaussent-Rousseau-2008}. Gaussent and Rousseau show that a wide variety of filters have ``good fixator''. For example, points $x \in \AA$ have good fixator, which includes the statement that $G_x = \widehat{P_x}$. In the untwisted affine case, we have an equality $G_0 = K$ \cite[\S 3.4 ]{Gaussent-Rousseau-2014}.

\subsubsection{Tits cone}

Let $P^{++}$ denote the cone of integral dominant coweights, and let $P^{++}_\RR$ denote the corresponding real cone. The \emph{real Tits cone} $\cT_{\RR} \subseteq \AA$ consists of the set of coweights that pair non-negatively with almost all positive roots. The \emph{real open Tits cone} is 
$\cT_{\RR} \subseteq \overset{\circ}{\cT_{\RR}}$ is the subset of the Tits cone consisting of coweights that pair strictly positively with almost all positive roots.
The \emph{(integral) Tits cone} is defined as $\cT = P \cap \cT_{\RR}$, and the \emph{open (integral) Tits cone} is defined as $\overset{\circ}{\cT} = P \cap \overset{\circ}{\cT_{\RR}}$.
In the untwisted affine case, which is our focus, $\overset{\circ}{\cT_{\RR}}$ and $\overset{\circ}{\cT}$ are exactly
the \emph{positive level} coweights.

\subsubsection{The preorder and $G^+$}

One defines a preorder on $\AA$ by declaring $x \leq y$ if $y-x \in \cT_\RR$. Given points $[g,x],[g,y] \in \sI$, one defines $[g,x] \leq [g,y]$ if $x \leq y$. This is independent of choice of $g$ (see \cite[\S 4.3 2)]{Gaussent-Rousseau-2008}). The positive part $\sI^+$ of the masure is defined to be the set of all points that are greater than or equal to $0 \in \sI$ under this preorder. One also defines $G^+$ to be the set of all $g \in G$ such that $g.0 \geq 0$.

\subsubsection{Notation for segment germs}
\label{sec:notation-for-segment-germs}

Given $x, y \in \sI$ with $x \leq y$, one can form the segment $[x,y]$ consisting of all points $z \in \sI$ such that $x \leq z \leq y$. Gaussent and Rousseau define the \emph{segment germ} $[x,y)$ to be the filter consisting of all open neighborhoods of $x$ in the segment $[x,y]$. Gaussent and Rousseau also use the notation for $[y,x)$ for the segment germ consisting of open neighborhoods of $y$ in $[x,y]$. We will prefer to call this segment germ $(y,x]$.

For our purposes, it will be clearer to adopt some further notation for segment germs.  Our segment germs will all lie in the standard apartment, so they will be of the form $[p,p+v)$ for a point $p \in \AA$ and a vector $v \in \cT_\RR$. Notice that $[p,p+tv) = [p,p+v)$ for any $t >0$. Therefore we will define $[p,p+\eps v) = [p,p+v)$. Similarly, we will write $(p-\eps v, p] = (p-v,p]$. This notation is suggestive of the idea that segment germs are akin to tangent vectors.

\subsection{Retraction along subgroups}

Let $Q$ be a subgroup of $G$. Let $\sI^\bullet \in \{ \sI, \sI^+ \}$ and consider correspondingly $G^\bullet \in \{ G, G^+ \}$. Let $\AA^\bullet = \AA \cap \sI^\bullet$. We say that a map $\rho_{Q}: \sI^\bullet \rightarrow \AA^\bullet$ is a \emph{retraction along $Q$} if $\rho_Q \vert \AA^\bullet : \AA^\bullet \rightarrow \AA^\bullet$ is the identity and $\rho_Q$ is $Q$-invariant.

We say that $G^\bullet$ \emph{admits generalized Iwasawa decomposition} with respect to $Q$ if for all $x \in X$
\begin{align}
  \label{eq:3}
 G^\bullet \subseteq Q N G_x
\end{align}
and further the $N$-factor of the above decomposition is unique up to right multiplication by $N \cap G_x$.

\begin{Proposition}
  \label{prop:unique-retraction-along-Q}
Suppose $G^\bullet$ admits generalized Iwasawa decomposition with respect to $Q$, then there is a unique retraction $\rho_Q$ along $Q$. 
\end{Proposition}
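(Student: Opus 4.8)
The plan is to construct $\rho_Q$ explicitly from the generalized Iwasawa decomposition and then verify it has the two required properties and is forced to be what it is. First I would fix a base apartment identification and, for a point $[g,x] \in \sI^\bullet$, write $g^{-1}$... more precisely, use the hypothesis to decompose the relevant group element. Given a point of $\sI^\bullet$, represent it as $[g,x]$ with $g \in G^\bullet$ and $x \in \AA^\bullet$ (one must first check every point of $\sI^\bullet$ admits such a representative — this follows because $\sI^\bullet$ is by definition the image of $G^\bullet \times \AA^\bullet$). Apply the generalized Iwasawa decomposition to write $g = q n u$ with $q \in Q$, $n \in N$, $u \in G_x$. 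Then define $\rho_Q([g,x]) = n \cdot x \in \AA$, where $N$ acts on $\AA$ through $W_P$ as in \eqref{eq:025}. One must check this lands in $\AA^\bullet$: since $[g,x] = [qn u, x] = [qn, x]$ (as $u \in G_x$), and $q \in Q \subseteq G$, the point $n\cdot x$ is the image in $\AA$ of a point of $\sI^\bullet$, hence in $\AA^\bullet$ by the order-preservation noted in the ``preorder and $G^+$'' paragraph.

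The key steps, in order, are: (1) \emph{well-definedness in $g$ and $x$}: show that if $[g,x] = [g',x']$, i.e. there is $m \in N$ with $x' = m x$ and $g^{-1}g'm \in G_x = \widehat{P_x}$, then the resulting element $n' \cdot x'$ equals $n \cdot x$; here one uses that the $N$-factor in the Iwasawa decomposition is unique up to right multiplication by $N \cap G_x$, and that $N \cap G_x$ acts trivially on the relevant orbit point (elements of $N\cap G_x$ fix $x$). (2) \emph{$Q$-invariance}: for $q' \in Q$, the point $q' \cdot [g,x] = [q'g, x]$ has Iwasawa decomposition $q'g = (q'q) n u$, same $n$, so $\rho_Q$ is unchanged. (3) \emph{restriction to $\AA^\bullet$ is the identity}: for $x \in \AA^\bullet$, the point is $[1,x]$, decompose $1 = 1 \cdot 1 \cdot 1$; but one must rule out other decompositions giving a different answer — again uniqueness of the $N$-factor up to $N \cap G_x$ does this, since any $1 = q n u$ forces $n \in (N \cap Q G_x)$, and one checks such $n$ fix $x$. (4) \emph{uniqueness of $\rho_Q$}: if $\rho$ is any retraction along $Q$, then for $[g,x]$ with Iwasawa decomposition $g = qnu$, $Q$-invariance gives $\rho([g,x]) = \rho([nu,x]) = \rho([n, x]) = \rho(n \cdot [1, n^{-1} x])$... cleaner: $[nu, x] = [n, x] = n \cdot [1,x]$, and since $n \in N$ normalizes nothing relevant we instead write $\rho([n,x]) = \rho(n\cdot[1, x])$ — but $n$ need not be in $Q$, so instead observe $[g,x] = [qn,x]$ and $q$-invariance gives $\rho([g,x]) = \rho([n,x])$; now $[n,x] = [1, n\cdot x]$ by the equivalence relation (take the element $n \in N$ itself: $n x = n\cdot x$ and $1^{-1} \cdot n \cdot n^{-1} = 1 \in G_x$ — wait, need $n^{-1}$; take instead $(n, x) \sim (1, nx)$ via $n^{-1} \in N$: $x = n^{-1}(nx)$ and $n^{-1}\cdot 1 \cdot n \in G_x$), so $\rho([n,x]) = \rho([1, n\cdot x]) = n\cdot x$ since $\rho$ restricts to the identity on $\AA^\bullet$. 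Hence $\rho$ must equal the $\rho_Q$ we constructed.

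I expect the main obstacle to be step (1), the well-definedness — specifically reconciling the two sources of ambiguity (the choice of representative $(g,x)$ modulo $\sim$, and the non-uniqueness of the Iwasawa $N$-factor modulo $N \cap G_x$) and showing they interact correctly so that $n \cdot x \in \AA$ is genuinely an invariant of the point. The delicate point is that $\widehat{P_x} = G_x$ is only the \emph{fixator}, so I need to carefully track that $g^{-1}g' m \in G_x$ combined with the two Iwasawa decompositions $g = qnu$, $g' = q'n'u'$ yields $n^{-1} q^{-1} q' n' (\text{something in } G_{x})$ lying in $N \cdot (N\cap G_x)$ after absorbing the $Q$-parts, forcing $n' \equiv n m^{-1} \pmod{N \cap G_x}$ in an appropriate sense, and then that $N \cap G_x$ acting on $x$ is trivial while $m^{-1}$ correctly accounts for $x' = mx$. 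This is essentially a bookkeeping argument but it is where all the hypotheses (uniqueness clause in the generalized Iwasawa decomposition, good-fixator property $G_x = \widehat{P_x}$, and the structure of $\sim$) must be used simultaneously; everything else is formal.
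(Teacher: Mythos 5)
Your proposal is correct and takes essentially the same route as the paper: the paper likewise decomposes $g = q n p$ via the generalized Iwasawa decomposition, uses $Q$-invariance, $p \in G_x$, and $[n,x]=[1,nx]$ to force $\rho_Q([g,x]) = n\cdot x$, with independence of the choice of decomposition coming from the uniqueness of the $N$-factor modulo $N \cap G_x$. The only difference is one of emphasis: the paper records just this forced-value computation (uniqueness, with existence left implicit), whereas you additionally spell out the routine verifications (well-definedness with respect to the representative $(g,x)$, $Q$-invariance, identity on $\AA^\bullet$) that the paper omits.
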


\begin{proof}
  Suppose $\rho_Q$ is a retraction along $Q$. Let $[g,x] \in \cI^\bullet$. We decompose $g = q n p$ according to \eqref{eq:3}. By $Q$-invariance, we must have $\rho_Q( [ g,x]) = \rho_Q( [n p, x])$. Because $p \in G_x$, we have $[n p,x] = [n,x]$. Finally, because $n \in N$, we have $n x \in \AA$, so $[n,x] = [1,nx]$. Therefore, we have
  \begin{align}
    \label{eq:4}
    \rho_Q( [ g,x]) = \rho_Q([1,nx]) = nx
  \end{align}
  where the second equality is determined by the retraction property of $\rho_Q$. Moreover, the uniqueness properties of the decomposition \eqref{eq:3} tell us that the value computed \eqref{eq:4} is independent of choice of decomposition.
\end{proof}

Gaussent and Rousseau show that $U^-$ admits generalized Iwasawa decomposition for the entire group $G$ (see \cite[Proposition 3.6]{Gaussent-Rousseau-2008}). Therefore, we have a unique retraction $\rho_{U^-} : \sI \rightarrow \AA$ along $U^-$. One can check that $\rho_{U^-}$ is exactly equal to the retraction $\rho_{-\infty}$ ``onto $\AA$ with center the anti-dominant segment-germ'' constructed by Gaussent and Rousseau \cite[Definition 4.5]{Gaussent-Rousseau-2008}.

Our description above clarifies the group theoretic nature of this retraction and gives a natural way to generalize retraction to groups not arising as fixators of filters. However, our description does not immediately give all the good properties of the retraction. For example, we do not immediately see the fact that $\rho_{-\infty}$ maps straight segments in the masure to piecewise linear paths in the standard apartment.

\subsection{A zoo of Weyl groups and root systems.}
\label{sec:weyl-groups-and-root-systems}

Let $\Phi^v$ be the root system for $\bG$.  Then we can define $\Phi^{v,+}$ and $\Phi^{v,-}$ to be cones of positive and negative roots respectively. Let us write $\beta > 0$ (resp. $\beta < 0$) if $\beta \in \Phi^{v,+}$ (resp. $\beta \in \Phi^{v,-}$).

Let $\Phi = \{ \beta + n \pi \suchthat \beta \in \Phi^v \textand n \in \ZZ\}$ be the $\bG$-affine root system.
Define 
\begin{align}
  \label{eq:35-imported}
\Phi^{+} =   \{ \beta + n \pi \suchthat n > 0 \textor  (n = 0 \textand \beta>0 ) \}
\end{align}
and:
\begin{align}
  \label{eq:36-imported}
\Phi^{-} =   \{ \beta + n \pi \suchthat n < 0 \textor  (n = 0 \textand \beta<0 ) \}
\end{align}
These two correspond to the real root spaces of $I$ and $I_\infty$ respectively.

Define affine:
\begin{align}
  \label{eq:33-imported}
\Phi^{U^+} =   \{ \beta + n \pi \suchthat \beta \in \Phi^{v,+} \textand n \in \ZZ \}
\end{align}
and:
\begin{align}
  \label{eq:34-imported}
\Phi^{U^-} =   \{ \beta + n \pi \suchthat \beta \in \Phi^{v,-} \textand n \in \ZZ \}
\end{align}
These two correspond to the real root spaces of $U^+$ and $U^-$ respectively.

\subsubsection{Notation for positive $\bG$-affine roots}

\newcommand{\betacheck}{\beta^\vee}

Let us define the signum function $\sgn : \ZZ \rightarrow \{\pm 1\}$ by
\begin{align}
  \label{eq:020-imported}
  \sgn(n) =
  \begin{cases}
    + 1 & \textif n \geq 0 \\
    - 1 & \textif n < 0. 
  \end{cases}
\end{align}
For $\beta \in \Phi^{v,+}$, we define
\begin{align}
  \beta[n] = \sgn(n)\cdot (\beta + n \pi) = \sgn(n)\beta + |n| \pi,
\end{align}
Then:
\begin{align}
  \label{eq:45-imported}
  \{ \beta[n] \suchthat \beta \in \Phi^{v,+} \textand n \in \ZZ \} = \Phi^+
\end{align}
For $\beta[n] \in \Phi^+$, we define
\begin{align}
  \label{eq:46-imported}
  s_{\beta[n]} = \pi^{n \betacheck} s_\beta
\end{align}

\subsubsection{One parameter-subgroups}

For each real root $\gamma \in \Phi^v$, we have the one parameter subgroup
\begin{align}
  \label{eq:57}
  e_\gamma : \GG_a \rightarrow \bG
\end{align}
which is a natural transformation of group functors from the additive group $\GG_a$ to the group $\bG$.

For each $\beta[n] \in \Phi^+$, we will define a corresponding one parameter subgroup
\begin{align}
  \label{eq:53}
 e_{\beta[n]} : \kk \rightarrow I
\end{align}
by:
\begin{align}
  \label{eq:129}
  e_{\beta[n]}(x) =
  \begin{cases}
    e_\beta(x \pi^n) & \textif n \geq 0 \\
    e_{-\beta}(x \pi^{-n}) & \textif n < 0 
  \end{cases}
\end{align}

Similarly, we define:
\begin{align}
  \label{eq:130}
 e_{-\beta[n]} : \kk \rightarrow I_\infty
\end{align}

\subsubsection{Root systems and Weyl groups associated to points}

Let $x \in \AA$. Then we can form
\begin{align}
  \label{eq:37-imported}
  \Phi^v_x = \{ \beta \in \Phi^v \suchthat \langle \beta, x \rangle \in \ZZ \}
\end{align}
and:
\begin{align}
  \label{eq:38-imported}
  \Phi^{v,+}_x = \Phi^{v,+} \cap \Phi^{v}_x  \\
  \Phi^{v,-}_x = \Phi^{v,-} \cap \Phi^{v}_x  
\end{align}
Then $\Phi^v_x$ is a Kac-Moody root system \cite{Bardy} with Weyl group $\Wv_x$.

We can also form
\begin{align}
  \label{eq:40-imported}
  \Phi_x = \{ \pm \beta[n] \in \Phi^+ \suchthat s_{\beta[n]} . x = x \}
\end{align}
and:
\begin{align}
  \label{eq:41-imported}
  \Phi^{U^+}_x = \Phi^{U^+} \cap \Phi_x  \\
  \Phi^{U^-}_x = \Phi^{U^-} \cap \Phi_x  
\end{align}
Then, as in the vectorial case, $\Phi_x$ is a Kac-Moody root system with Weyl group $W_x$.

Similarly, we define
\begin{align}
  \label{eq:57-imported}
  \Phi^{+}_x = \Phi^{+} \cap \Phi_x  \\
  \Phi^{-}_x = \Phi^{-} \cap \Phi_x  
\end{align}
For $x \in \overset{\circ}{\TitsCone}$, $\Phi_x$ is a Kac-Moody root system with Weyl group $W_x$ such that $\Phi^{+}_x$ is the set of positive real roots. Moreover, the subgroup $W_{[x,(1+\epsilon)x)}$ is a standard parabolic subgroup. This is immediate for $x$ dominant, and the general case follows by translating by an element of $\Wv$.

\subsubsection{Comparison with vectorial roots system and Weyl group}

We have the inclusion map $\Phi^v \hookrightarrow \Phi$ and the projection map $\Phi \twoheadrightarrow \Phi^v$.

For any $x \in \AA$, the induced map
\begin{align}
  \label{eq:42-imported}
\Phi_x \rightarrow \Phi^v  
\end{align}
is injective, and the image is exactly equal to $\Phi^{v}_x$.

Similarly, on the level of Weyl groups, we have the inclusion map $\Wv \hookrightarrow W$ and the projection map $W \twoheadrightarrow \Wv$.
Let us denote the projection map $W \twoheadrightarrow \Wv$ by:
\begin{align}
  \label{eq:54-imported}
  z \mapsto [z]
\end{align}

Also for any $x \in \AA$, the induced map
\begin{align}
  \label{eq:55-imported}
  W_x \rightarrow \Wv
\end{align}
is injective, and the image is exactly equal to $\Wv_x$.
\begin{Definition}
  Let us write
  \begin{align}
    \label{eq:47-imported}
   \iota_{x} : \Wv_x \rightarrow W_x 
  \end{align}
  for the inverse map to the isomorphism:
  \begin{align}
    \label{eq:56-imported}
    [\cdot]: W_x \rightarrow \Wv_x
  \end{align}
\end{Definition}

Let $z \in \Wv$ and $x \in \AA$. Then there is a unique $z^{(1)} \in \Wv_x$ such that:
\begin{align}
  \label{eq:39-imported}
  z ( \Phi^{v,-}) \cap \Phi^{v}_x = z^{(1)} (\Phi^{v,-}_x) 
\end{align}

Let:
\begin{align}
\Sigma^{v}_x = \{ v \in \Wv \suchthat v(\Phi^{v,-}) \cap \Phi^{v}_x = \Phi^{v,-}_x \}
\end{align}
Note that $\Sigma^{v}_x$ is only a set. It is not a subgroup of $\Wv_x$.
Then we have $ {(z^{(1)})}^{-1} \cdot z \in \Sigma^v_x$. 
\begin{Lemma}
  \label{lem:parabolic-factorization}
  As sets, $\Wv$ factors as a direct product:
  \begin{align}
    \Wv =  \Wv_x \cdot \Sigma^v_x
  \end{align}
  That is, for all $z \in \Wv$, we can uniquely factor
  \begin{align}
   z = z^{(1)} \cdot z^{(2)} 
  \end{align}
   where $z^{(1)} \in \Wv_x$ and $z^{(2)} \in \Sigma^v_x$.
\end{Lemma}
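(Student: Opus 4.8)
The plan is to recognize $\Sigma^v_x$ as the set of minimal-length coset representatives for $\Wv_x \backslash \Wv$, which is a classical object in Coxeter theory, and to identify the factorization $z = z^{(1)} z^{(2)}$ with the standard one for this parabolic quotient. The key observation is that the condition $v(\Phi^{v,-}) \cap \Phi^v_x = \Phi^{v,-}_x$ says exactly that $v^{-1}$ sends the positive roots of the (sub–root system) $\Phi^v_x$ to positive roots of $\Phi^v$; equivalently, writing $\Wv_x$ as the reflection subgroup generated by the reflections $s_\beta$ for $\beta \in \Phi^v_x$, the element $v$ has no inversions among the roots of $\Phi^v_x$. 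Since $\Phi^v_x$ is itself a Kac-Moody root system (as stated after \eqref{eq:38-imported}), with its own positive system $\Phi^{v,+}_x$ and Weyl group $\Wv_x$, the general theory of reflection subgroups of Coxeter groups (Dyer, Deodhar) applies.

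First I would record the defining property of $z^{(1)}$: by \eqref{eq:39-imported}, $z^{(1)}$ is the unique element of $\Wv_x$ with $z(\Phi^{v,-}) \cap \Phi^v_x = z^{(1)}(\Phi^{v,-}_x)$. Setting $z^{(2)} = (z^{(1)})^{-1} z$, one computes directly
\begin{align*}
  z^{(2)}(\Phi^{v,-}) \cap \Phi^v_x
   = (z^{(1)})^{-1}\bigl( z(\Phi^{v,-}) \cap z^{(1)}(\Phi^v_x)\bigr)
   = (z^{(1)})^{-1}\bigl( z(\Phi^{v,-}) \cap \Phi^v_x\bigr)
   = (z^{(1)})^{-1} z^{(1)}(\Phi^{v,-}_x)
   = \Phi^{v,-}_x,
\end{align*}
using that $z^{(1)} \in \Wv_x$ preserves $\Phi^v_x$ (hence $z^{(1)}(\Phi^v_x) = \Phi^v_x$). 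This shows $z^{(2)} \in \Sigma^v_x$, so the factorization $z = z^{(1)} z^{(2)}$ with $z^{(1)} \in \Wv_x$, $z^{(2)} \in \Sigma^v_x$ exists, which is the content already noted in the excerpt before the lemma. It remains to prove uniqueness.

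For uniqueness, suppose $z = a b = a' b'$ with $a, a' \in \Wv_x$ and $b, b' \in \Sigma^v_x$. Then $c := (a')^{-1} a = b' b^{-1} \in \Wv_x$, and I claim $c = 1$. Since $b, b' \in \Sigma^v_x$ means $b(\Phi^{v,-}) \cap \Phi^v_x = \Phi^{v,-}_x = b'(\Phi^{v,-}) \cap \Phi^v_x$, and since $c \in \Wv_x$ preserves $\Phi^v_x$, one gets
\begin{align*}
  c(\Phi^{v,-}_x) = c\bigl(b(\Phi^{v,-}) \cap \Phi^v_x\bigr)
    = cb(\Phi^{v,-}) \cap \Phi^v_x = b'(\Phi^{v,-}) \cap \Phi^v_x = \Phi^{v,-}_x.
\end{align*}
But an element of the Weyl group $\Wv_x$ of the Kac-Moody root system $\Phi^v_x$ that fixes the full set of negative roots $\Phi^{v,-}_x$ setwise must be the identity — this is the standard fact that a Weyl group element is determined by (indeed trivial when it preserves) its positive system, valid for arbitrary Kac-Moody root systems. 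Hence $c = 1$, giving $a = a'$ and $b = b'$.

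The main obstacle, such as it is, is purely bookkeeping: one must be careful that $\Phi^v_x$ really behaves like the root system of $\Wv_x$ with $\Phi^{v,+}_x$ as positive roots — in particular that the "preserved positive system $\Rightarrow$ identity" fact holds in this generality. This is supplied by the cited fact that $\Phi^v_x$ is a Kac-Moody root system with Weyl group $\Wv_x$ \cite{Bardy}, together with the standard structure theory of reflection subgroups of Coxeter groups; no untwisted-affine hypothesis is needed here. Everything else is the short direct computation above. Alternatively, one can phrase the whole argument via lengths: $\Sigma^v_x$ is exactly $\{\, v \in \Wv : \ell(sv) > \ell(v) \text{ for all simple reflections } s \text{ of } \Wv_x \,\}$, the minimal-coset-representative set, and then the lemma is the classical parabolic factorization $\Wv = \Wv_x \cdot {}^{\Wv_x}\!\Wv$; I would include the root-theoretic version above since it matches the notation \eqref{eq:39-imported} already set up.
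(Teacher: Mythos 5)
Your proof is correct, and it is essentially the argument the paper intends: the paper states Lemma \ref{lem:parabolic-factorization} without proof, having already set up existence in the sentences just before it (the element $z^{(1)}$ of \eqref{eq:39-imported} and the observation that $(z^{(1)})^{-1}z \in \Sigma^v_x$), and treating the rest as the standard coset decomposition for the reflection subgroup $\Wv_x$ via \cite{Bardy} and Dyer--Deodhar theory — exactly the facts you invoke. Your uniqueness step is fine; note it can be shortened further by observing that if $z = ab$ with $a \in \Wv_x$, $b \in \Sigma^v_x$, then $z(\Phi^{v,-}) \cap \Phi^v_x = a\bigl(b(\Phi^{v,-}) \cap \Phi^v_x\bigr) = a(\Phi^{v,-}_x)$, so $a = z^{(1)}$ by the uniqueness clause of \eqref{eq:39-imported} and hence $b$ is determined as well; your reduction to ``an element of $\Wv_x$ preserving $\Phi^{v,-}_x$ is the identity'' is the same fact in a different guise. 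One small caution: $\Wv_x$ is in general only a reflection subgroup of $\Wv$, not a standard parabolic, so the phrase ``classical parabolic factorization'' should be understood in the reflection-subgroup (Dyer) sense, as you indeed indicate.
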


\begin{Definition}
  Let $x \in \AA$.
  Let us define 
  \begin{align}
   p_{x} :  \Wv \twoheadrightarrow \Wv_x
  \end{align}
 as follows. By Lemma \ref{lem:parabolic-factorization} we can uniquely
  \begin{align}
    z^{-1} = z^{(1)} \cdot z^{(2)}
  \end{align}
  where $z^{(1)} \in \Wv_x$ and $z^{(2)} \in \Sigma^v_x$. Then we define:
  \begin{align}
   p_{x}(z) = (z^{(1)})^{-1} 
  \end{align}
\end{Definition}

In other words, under the direct product factorization
\begin{align}
  \label{eq:052}
  \Wv = (\Sigma^v_x)^{-1} \cdot \Wv_x
\end{align}
$p_{x}$ sends an element of $\Wv$ to the right factor which lies in $\Wv_x$.

By \eqref{eq:39-imported}, we have:
\begin{align}
  z^{-1} ( \Phi^{v,-}) \cap \Phi^{v}_x = p_{x}(z)^{-1} (\Phi^{v,-}_x) 
\end{align}

\subsubsection{A lemma about Bruhat order on parabolic quotients}

Let $\lambda$  be a dominant weight, and let $\Wv_\lambda$ be the stabilizer of $\lambda$ under the action of $\Wv$. The group $\Wv_\lambda$ is a standard parabolic subgroup of $\Wv$, and therefore there is an induced Bruhat order on $\Wv/\Wv_\lambda$. Usually, one identifies $\Wv/\Wv_\lambda$ with the set of minimal representatives modulo $\Wv_\lambda$, and the Bruhat order is given by restricting the Bruhat order on $\Wv$ to the minimal length representatives. Here is an alternative description of the Bruhat order that does not explicitly mention minimal length representatives.

\begin{Lemma}
Let $x,y \in \Wv/\Wv_\lambda$. Then we have $x \geq y$ in $\Wv/\Wv_\lambda$ if and only if
there exists a sequence of positive roots $\beta_1, \cdots, \beta_s \in \Phi^{v,+}$, and elements
\begin{align}
  \label{eq:112}
 x.\lambda  = \xi_{0} , \xi_{1}, \cdots, \xi_{s} = y.\lambda  
\end{align}
such that for all $i$:
\begin{itemize}
\item  $\xi_i = s_{\beta_i}(\xi_{i-1})$
\item  $ \langle \beta_i, \xi_{i-1} \rangle < 0$
\end{itemize}
\end{Lemma}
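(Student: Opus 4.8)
The statement characterizes $x \geq y$ in $\Wv/\Wv_\lambda$ via chains of "root reflections that decrease the pairing with the relevant weight." This is essentially the statement that the Bruhat order on a parabolic quotient can be read off from the orbit $\Wv.\lambda$ of a dominant weight, with covers given by reflections. So the plan is to reduce to the standard description of the Bruhat order on $\Wv/\Wv_\lambda$ by minimal-length coset representatives, and to translate the condition on minimal representatives into the condition on the weights $\xi_i = x.\lambda$, etc.

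Here is how I would carry it out. First I would recall the standard facts: (i) every coset in $\Wv/\Wv_\lambda$ has a unique minimal-length representative, (ii) the Bruhat order on $\Wv/\Wv_\lambda$ is the restriction of the Bruhat order on $\Wv$ to these minimal representatives, and (iii) the map $w \mapsto w.\lambda$ restricted to minimal representatives is a bijection onto $\Wv.\lambda$, under which $w \leq w'$ (minimal reps) corresponds to the orbit. Next, I would set up the "only if" direction: given $x \geq y$, pick a saturated chain $x = u_0 > u_1 > \cdots > u_s = y$ of minimal-length coset representatives in which each step is a cover, so $u_i = t_{\beta_i} u_{i-1}$ for some positive root $\beta_i$ (this uses the standard $Z$-property / subword property of Bruhat order, and the fact that covers in the quotient come from reflections). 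Applying to $\lambda$ and writing $\xi_i = u_i.\lambda$, I would use: a cover $u_{i-1} \gtrdot u_i$ with $u_i = s_{\beta_i} u_{i-1}$ forces $\langle \beta_i, u_{i-1}.\lambda\rangle > 0$ (equivalently $\langle \beta_i, \xi_{i-1}\rangle > 0$, so $\xi_i = s_{\beta_i}(\xi_{i-1})$ is "lower"). Here one must be careful with the direction of the inequality and with the fact that different $u_{i-1}$ may give the same weight only if the reflection fixes $\lambda$, which is excluded by the strict inequality. To match the lemma's sign convention $\langle\beta_i,\xi_{i-1}\rangle < 0$, I would simply replace $\beta_i$ by the positive root in $\{\beta_i, -\beta_i\}\cap\Phi^{v,+}$ appropriately — or note that the lemma is stated for the \emph{reverse} chain $x.\lambda = \xi_0, \ldots, \xi_s = y.\lambda$ with decreasing $\xi$'s, so each step \emph{lowers} the weight and the sign works out.

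For the "if" direction, given the sequence $\xi_0, \ldots, \xi_s$ with $\xi_i = s_{\beta_i}(\xi_{i-1})$ and $\langle\beta_i,\xi_{i-1}\rangle < 0$, I would lift each $\xi_i$ to its minimal-length coset representative $u_i$ and show inductively that $u_{i-1} \geq u_i$ in $\Wv$; then $x = u_0 \geq u_s = y$ as coset representatives, giving $x \geq y$ in the quotient. The key inductive lemma is: if $v$ is a minimal-length representative, $\beta \in \Phi^{v,+}$, and $\langle \beta, v.\lambda \rangle < 0$, then $s_\beta v$ has a minimal-length representative $v'$ with $v' \leq v$ and $v'.\lambda = s_\beta(v.\lambda)$; this is a standard consequence of the exchange condition (the hypothesis $\langle\beta, v.\lambda\rangle<0$ says $v^{-1}\beta$ is a negative root modulo $\Wv_\lambda$, hence $\ell(s_\beta v) < \ell(v)$ up to the parabolic, so we get a Bruhat descent). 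Iterating gives the chain.

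The \textbf{main obstacle} is that $\Wv$ here is an arbitrary (infinite) Coxeter group — the Weyl group of a Kac-Moody root system — and $\Wv_\lambda$ is a standard parabolic subgroup that may itself be infinite, so one must be sure all the "standard facts" above hold in this generality. They do: the theory of minimal coset representatives, the subword/$Z$-property, the exchange condition, and the characterization of covers all hold for arbitrary Coxeter groups and arbitrary (including infinite-rank) standard parabolic subgroups, and $\lambda$ dominant guarantees $\Wv_\lambda$ is generated by the simple reflections fixing $\lambda$. The one genuinely careful point is the equivalence between "$v^{-1}\beta < 0$ in the parabolic sense" and "$\langle \beta, v.\lambda\rangle < 0$" — this requires that $\lambda$ be dominant and that its stabilizer be exactly the parabolic generated by $\{s_i : \langle\alpha_i,\lambda\rangle = 0\}$, which is where dominance is used essentially. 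Modulo that, the proof is a routine translation between the weight picture and the Coxeter-combinatorial picture.
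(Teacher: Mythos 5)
The paper states this lemma without proof (it is offered as a ``standard'' alternative description of the Bruhat order on $\Wv/\Wv_\lambda$), so there is no proof of record to compare against; judged on its own, your argument is the standard one and it does work. The reduction to minimal-length coset representatives, the chain/cover property of the quotient order for arbitrary Coxeter groups, and the translation between ``$v^{-1}\beta$ is negative and not a root of $\Wv_\lambda$'' and ``$\langle\beta, v.\lambda\rangle<0$'' (which is where dominance of $\lambda$ and the fact that its stabilizer is the standard parabolic generated by the simple reflections fixing $\lambda$ are used) are exactly the right ingredients, and they hold in the Kac-Moody generality needed here. For the ``if'' direction you can even skip minimal representatives: $\langle\beta_i,\xi_{i-1}\rangle<0$ forces $u^{-1}(\beta_i)$ to be a negative root for \emph{any} representative $u$ of the coset of $\xi_{i-1}$, hence $\ell(s_{\beta_i}u)<\ell(u)$, hence $s_{\beta_i}u<u$ in $\Wv$, and the projection $\Wv\rightarrow\Wv/\Wv_\lambda$ is order-preserving; iterating gives $y\le x$.

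The one spot where your write-up is wrong as stated is the sign in the ``only if'' direction. If $u_{i-1}$ is the minimal representative of its coset and the coset strictly decreases under $s_{\beta_i}$ with $\beta_i\in\Phi^{v,+}$, then $u_{i-1}^{-1}(\beta_i)$ is a \emph{negative} root that does not lie in the root system of $\Wv_\lambda$ (otherwise the coset would not move), and since a root pairs to zero with the dominant $\lambda$ exactly when it lies in that subsystem, one gets $\langle\beta_i,\xi_{i-1}\rangle=\langle u_{i-1}^{-1}(\beta_i),\lambda\rangle<0$ directly --- not $>0$ as you first assert, and no replacement of $\beta_i$ within $\{\beta_i,-\beta_i\}$ is available since $-\beta_i$ is not a positive root. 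Your hedge that ``the sign works out'' does land on the correct statement, but the phrase ``each step lowers the weight'' is also loose: each step lowers the coset in the Bruhat order while moving the weight toward the dominant chamber ($\xi_i=\xi_{i-1}-\langle\beta_i,\xi_{i-1}\rangle\beta_i^\vee$ with $-\langle\beta_i,\xi_{i-1}\rangle>0$). With that sign computation done once and cleanly, your proof is complete.
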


\section{Deodhar problems and twinned buildings}
\label{sec:deodhar-problems-and-twinned-buildings}
\newcommand{\G}{\underline{G}}
\newcommand{\B}{\underline{B}}
\renewcommand{\P}{\underline{P}}
\newcommand{\W}{\underline{W}}

In this section, we will only over the finite field $\kk$. Let $\G$ be the $\kk$-points of a Kac-Moody group with opposite Borels $\B$ and $\B^-$, and Weyl group $\W$. Consider the flag variety $\G/\B$, and for $w,v \in \W$, we can consider the Schubert cell $\B w \B/\B$  and the opposite Schubert cell $\B^- v \B/\B$.
We will recall below a formula computing cardinality of the $\kk$-points of the intersection of Schubert cells with (translates) of opposite Schubert cells. We will the problem of computing such a cardinality a \emph{Deodhar problem} after Deodhar's work in \cite{Deodhar}. 

For our purposes in the rest of the article, $\G$ will be the group controlling the tangent building of a point in the masure. We will need to consider the situation that the Cartan matrix defining $\G$ is infinite: this is no difficulty because when we study Schubert varieties, our attention will always involve a finite submatrix of the Cartan matrix.

\subsection{Deodhar decomposition}

Let $w,v \in \W$. Deodhar in \cite{Deodhar} has given an explicit decomposition of $\B w \B/\B \cap \B^- v \B/\B$ in. His decomposition depends on a reduced word $\sigma = (\sigma_1,\ldots,\sigma_\ell)$ of $w$. He calls a subexpression $\tau = (\tau_1,\ldots,\tau_\ell)$ \emph{distinguished} if it satisfies a certain explicit combinatorial condition that we will omit here (\cite[Definition 2.3]{Deodhar} and see also \cite[Definition 4.1]{Muthiah-Orr-2018}). Write $\sD_\sigma$ for the set of distinguised subexpressions of $\sigma$. For each $\tau \in \sD_\sigma$, Deodhar constructs a subset $S_\tau$ of $\B w \B/\B \cap \B^- v \B/\B$ that is isomorphic to $\kk^a \times (\kk^\times)^b$ for some $a,b \geq 0$. Then he proves that:
\begin{align}
  \label{eq:1}
  \B w \B/\B \cap \B^- v \B/\B = \bigsqcup_{\tau \in \sD_\sigma} S_\tau
\end{align}
In particular, this gives an explicit formula for the cardinality of $\B w \B/\B \cap \B^- v \B/\B$.

\subsubsection{Attracting sets in Bott-Samelson varieties}

Let us briefly sketch a proof of the Deodhar decomposition using the geometry of Bott-Samelson varieties (see \cite[\S 5]{Muthiah-Orr-2018} for the argument in detail). Recall that $\B^- v \B/\B$ is precisely the attracting set for an explicit $\GG_m$-action on $\G/\B$ (coming from the cocharacter $\rho : \GG_m \rightarrow \G$). Corresponding to the reduced word $\sigma$, we can construct a Bott-Samelson variety $\Sigma(\sigma)$, which resolves the Schubert variety $\overline{\B w \B/\B}$. Specifically, we have an open set $\overset{\circ}{\Sigma(\sigma)}$ that maps isomorphically to the Schubert cell $\B w \B/\B$. The $\GG_m$ action $\rho$ lifts to to $\Sigma(\sigma)$. The torus fixed points on $ \Sigma(\sigma)$ are precisely indexed by the subexpressions $\tau$ of $\sigma$. The distinguished subexpression are precisely those subexpressions whose attracting sets have non-trivial intersection with $\overset{\circ}{\Sigma(\sigma)}$, and $S_\tau$ is precisely the intersection of $\overset{\circ}{\Sigma(\sigma)}$ with the attracting set of $\tau$. Because the Bott-Samelson variety comes equipped with an explicit atlas, we can compute $S_\tau$ explicitly and see that it is isomorphic to $\AA^a \times \GG_m^b$.

\subsubsection{The parabolic generalization}

Let $\P$ be a standard parabolic of $\G$. The proof also shows that it is easy to compute the cardinality of sets of the form $u \B w \P/\P \cap t \B^- v \P/\P$. We immediately see that this set is isomorphic to $\B w \P/\P \cap u^{-1}t \B^- v \P/ \P$. Repeating the argument, now instead using a cocharacter $\eta: \GG_m \rightarrow \G$ to produce a $\GG_m$-action having $u^{-1}t \B^- v \P/\P$ as an attracting set, we can compute a decomposition similar \eqref{eq:1}. The notion of distinguished expression will change according to $\eta$, but the overall computation will be similar. We call the problem of computing the cardinality of sets of the form $u \B w \P/\P \cap t \B^- v \P/\P$ a \emph{(parabolic) Deodhar problem}, and we see that parabolic Deodhar problems have explicit combinatorial solutions.

\begin{Remark}
  We sketched the proof above because our strategy for solving Deodhar problems for $p$-adic Kac-Moody groups
is a generalization of this method. Gaussent and Littelmann explain in \cite{Gaussent-Littelmann} how limits of various $\GG_m$-actions on Bott-Samelson varieties correspond to various retractions from buildings to their standard apartments. Because we do not have Bott-Samelson varieties for $p$-adic Kac-Moody groups, we proceed using masures.
\end{Remark}

\subsection{Twinned buildings}

\newcommand{\twinI}{\underline{\sI}}

One has, associated to $\G$, a twinned paired of buildings $(\twinI^-,\twinI^+)$. The chambers, $\cC^+$ of $\twinI^+$ are in bijection with $\G/\B$, and the chambers, $\cC^-$ are in bijection with $\G/\B^+$. The data of the twinning is a codistance:
\begin{align}
  \label{eq:5}
  d^* : \cC^- \times \cC^+ \rightarrow \W
\end{align}
In our case, the codistance can be described very explicitly in group-theoretic terms. Given $(g_1 \B^-,g_2 \B) \in \cC^- \times \cC^+$, we have $d^*((g_1 \B^-,g_2 \B)) = w$ where $w \in \W$ is uniquely determined by $\B^- g_1^{-1} g_2 \B = \B^- w \B$.

There is also a usual distance function $d : \cC^+ \times \cC^+ \rightarrow W$ with an analogous group-theoretic characterization. Using this description, one sees the following well known fact.
\begin{Proposition}
 The set $\B^- v \B/ \B \cap \B w \B/ \B $ is in bijection with the chambers $g\B \in \cC^+$ of distance $w$ from $\B$ and codistance $v$ from $\B^-$.
\end{Proposition}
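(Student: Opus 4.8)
The plan is to unwind the group-theoretic descriptions of the distance $d$ and the codistance $d^*$ recorded just above the statement, and to observe that each of the two conditions cuts out exactly a Bruhat cell, respectively an opposite Bruhat cell, inside $\G/\B$. Concretely, identify $\cC^+$ with $\G/\B$ and let $C_0^+ = \B$ be the base chamber; identify $\cC^-$ with $\G/\B^-$ and let $C_0^- = \B^-$ be the base chamber, so that $d^*(C_0^-, C_0^+)$ is the identity of $\W$ (the standard normalization, consistent with $\B^-\cdot 1^{-1}\cdot 1\cdot \B = \B^-\B = \B^- 1 \B$). The bijection will be the identity map of $\G/\B$, viewed on one side as a set of chambers and on the other as the flag variety.

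For the distance condition, I would apply the stated characterization of $d$ to the pair $(C_0^+, g\B)$: by definition $d(C_0^+, g\B) = w$ iff $\B\, g\, \B = \B w \B$. By the Bruhat decomposition $\G = \bigsqcup_{w' \in \W} \B w' \B$ this determines $w$ uniquely, and it holds for the given $w$ precisely when $g \in \B w \B$, i.e. when $g\B \in \B w \B/\B$. This is independent of the chosen coset representative, since replacing $g$ by $gb$ with $b \in \B$ does not change $\B g \B$. Symmetrically, applying the stated characterization of $d^*$ to the pair $(C_0^-, g\B)$ gives $d^*(C_0^-, g\B) = v$ iff $\B^-\, g\, \B = \B^- v \B$, which by the Birkhoff decomposition $\G = \bigsqcup_{v' \in \W} \B^- v' \B$ determines $v$ uniquely and holds for the given $v$ exactly when $g\B \in \B^- v \B/\B$; again this does not depend on the representative.

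Combining the two, the identity map of $\G/\B$ identifies the set of chambers $g\B \in \cC^+$ satisfying $d(C_0^+, g\B) = w$ and $d^*(C_0^-, g\B) = v$ with $\B^- v \B/\B \cap \B w \B/\B$, which is the claim. There is no substantive obstacle here: the statement is pure definition-chasing, and the only points needing (routine) care are the well-definedness on cosets noted above and the observation that the group-theoretic formulas for $d$ and $d^*$ coming from the twinning specialize, at the base chambers $C_0^\pm$, to the Bruhat- and Birkhoff-cell membership conditions — both of which are immediate from the two decompositions of $\G$.
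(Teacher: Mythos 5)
Your proof is correct and is exactly the intended argument: the paper offers no separate proof, simply asserting the proposition as a well-known consequence of the group-theoretic descriptions of $d$ and $d^*$, and your unwinding via the Bruhat and Birkhoff decompositions at the base chambers (with the routine check of independence of coset representative) is precisely that definition-chase.
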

We see that the the Deodhar problem can be phrased as a question of counting chambers in the building $\twinI^+$ with constrained distance and codimension. More general and parabolic Deodhar problems can be similarly phrased as counting certain faces in the building with constrained distance and codimension.

\subsubsection{Tangent buildings}

A key feature of the masure $\sI$ is that at every point $x \in \sI$, one has a twinned pair of \emph{tangent buildings} $(\sI^-_x, \sI^+_x)$. As a set $\sI^+_x$ consists of a segment germs $[x,y)$ where $y \in \sI$ and $x \leq y$. Similarly, $\sI^-_x$ consists of segment germs $(y,x]$ where $y \leq x$. Moreover, when $x \in \AA$ this building is modelled on the root system $\Phi_x$. We refer to \cite[\S 5.11 Proposition 4]{Rousseau-2017} for the precise statements. We remark that tangent buildings are sometimes called ``residue buildings'' in the literature.

\subsubsection{Buildings and Bott-Samelson varieties}
The usual method to solve Deodhar problems in the building theoretic context above is to consider all galleries of a fixed ``type'' starting at the fundamental chamber. The ``type'' corresponds to a reduced word of $w$. Then one computes which galleries end in a chamber of the prescribed codistance. In the process one produces a sum over folded galleries. The folded galleries precisely correspond to the distinguished expressions above.

The equivalence between this gallery-theoretic approach and the Bott-Samelson approach sketched above is originally due to Contou-Carr\'ere in his thesis \cite{Contou-Carrere}. We learned of this method, in particular the idea of thinking about retraction as limiting under a $\GG_m$-action,  in the work of Gaussent and Littelmann \cite{Gaussent-Littelmann}.

\subsubsection{Our purposes}

Our purpose of reviewing this correspondence between buildings and Bott-Samelson varieties is two fold. First, our overall idea is to rephrase our Deodhar problems for the $p$-adic Kac-Moody group, ~\eqref{eq:131},~\eqref{eq:128}, and \eqref{eq:72}, in masure theoretic terms. Unfortunately, because the theory of twinned masures has not yet been developed, we cannot conclude that we have solved the problem. However, we will be able to combinatorially compute the answer nonetheless.

Second, in the process of computing the answer, we will need to perform computations in the tangent buildings of various points in the masure. These tangent buildings are a twinned pair, and the computation will involve counting faces with fixed distance and codistance conditions. The existing calculations of this sort in tangent buildings involve the gallery-theoretic approach. Instead, we will rephrase the computation as precise Deodhar problems as discussed above. This way we need not unpack the combinatorics of solving these problems. This difference is largely cosmetic because the answers at the end will be identical. Nevertheless, we find it simplifying to encapsulate the complexity these tangent building computations and also clarifying for our purposes of generalization.

\section{The Gaussent-Rousseau computation of the Satake transform}
\label{sec:the-Gaussent-Rousseau-computation-of-the-Satake-transform}

In this section, we will review the Gaussent-Rousseau computation of the Satake transform \cite{Gaussent-Rousseau-2008}. Our goal is to present their work in a way that (1) emphasizes the role of Deodhar problems in the tangent building computations and (2) allows a straight-forward generalization to the case of $I_\infty$.

\subsection{The Satake transform and point counting}

Let $\lambda$ be a dominant coweight, and let $\nu$ be a coweight with $\nu \leq \lambda$. We want to understand the set
\begin{align}
  \label{eq:6}
  ( K \pi^\lambda K \cap U^- \pi^\nu K )/K
\end{align}
whose cardinality defines that Satake transform.

Gaussent and Rousseau have given an explicit formula for this cardinality given by counting paths in the masure. We will review their method, and explain their computation in the language we have explained above. Instead of using their masure-theoretic characterization of retraction, we use the group theoretic characterization of retraction $\rho_{U^-}$. Second, we will encapsulate the tangent building computations as Deodhar problems instead of unpacking the computations involving galleries. Our purpose is to present this computation in a way that will generalize to computing retraction along $I_\infty$.

\subsection{Identifying $ K \pi^\lambda K/K$ with a space of segments}

Let us consider the point $\lambda = \pi^{\lambda}.0$ (N.B. Gaussent and Rousseau would have a minus sign here). Then $\lambda \geq 0$, so we can form the straight line path
$\phi : [0,1] \rightarrow \AA$ given by:
\begin{align}
  \label{eq:07}
  \phi_0(t) = t \cdot (\lambda) = t \pi^\lambda.0
\end{align}

One can consider all $G$-translates of $\phi_0$, which  consists of paths of the form $t \mapsto [g, t \pi^\lambda.0]$ in $\cI$ for fixed $g \in G$. We also want this path to take the value $0$ at time $0$, which forces $g \in G_0=K$.

Therefore, we are considering all $K$ translates of the path $\phi_0$. We can see that the set of all end points of these paths are given by $K \pi^\lambda.0$, which we identify with $K \pi^\lambda K /K$. Moreover, because these paths are all segments and segments are determined by their endpoints, we identify $K \pi^\lambda K /K$ with the set of all $K$-translates of the path $\phi_0$. Let us write $K.\phi_0$ for the set of $K$-translates of $\phi_0$.

Observe that the stabilizer of $K$ acting on the path $\phi_0$ is precisely the fixator $G_{[0,\lambda]}$ of the segment $[0,\lambda]$. By \cite[\S 4.2 Example 1]{Gaussent-Rousseau-2008}, we have $G_{[0,\lambda]} = G_{\{0,\lambda \}} = G_{0} \cap G_\lambda = K \cap \pi^{\lambda} K \pi^{-\lambda}$. Observe that $K \cap \pi^{\lambda} K \pi^{-\lambda}$ is also the stabilizer  of $K$ acting on $\pi^\lambda K$ in $G/K$. Therefore, we have an $K$-equivariant isomorphism:
\begin{align}
  \label{eq:7}
  K . \phi_0 \cong   K \pi^\lambda K /K
\end{align}

\subsection{Computing retractions $\rho_{-\infty}$}
\label{sec:computing-retractions-rho-infty}

Let $\phi$ be a path in $K.\phi_0$. We can consider the pointwise retraction $\tau = \rho_{U^-}(\phi)$ of the path $\phi$. From our group theoretic description of $\rho_{U^-}$, we do not a priori see why $\tau$ is continuous. But from the masure-theoretic description of the retraction, one has that $\tau : [0,1] \rightarrow \AA$ is a piecewise linear path \cite[\S 4.4]{Gaussent-Rousseau-2008}.

Recall that $\phi(t) = [g, t \pi^\lambda.0]$ for some $g \in K$. Let us try to compute $\tau$ for very small $t$.
As in we explained \S \ref{sec:notation-for-segment-germs}, we write $[0,\epsilon\cdot\lambda)$ for the segment-germ consisting open neighborhoods of $0$ in $[0,\lambda]$. Therefore, we can consider the fixator subgroup $G_{[0,\epsilon\cdot\lambda)}$. To declutter notation let us drop the $\lambda$'s from our future notation (i.e. we will identify a point $t \in [0,1]$ with the point $t.\lambda \in \AA$. For example, we will simply write $[0,\epsilon)$ and $G_{[0,\epsilon)}$).

\subsubsection{The first step}
Because $[0,\epsilon)$ is a ``narrow'' filter, we have an Iwasawa decomposition 
\begin{align}
  \label{eq:9}
 G = U^- N G_{[0,\epsilon)} 
\end{align}
Let us write 
\begin{align}
  \label{eq:10}
 g = u_0 z_0 p_0 
\end{align}
where $u_0 \in U^-$, $z_0 \in W$, $p_0 \in G_{[0,\epsilon)}$.

Gaussent and Rousseau prove that $G_{[0,\epsilon)}$ is a good fixator. Among other properties we have
\begin{align}
  \label{eq:11}
  G_{[0,\epsilon)} = \bigcup_{t > 0} G_{[0,t]}
\end{align}
where again we abbreviate the notation for the segment $[0,t.\lambda]$ to $[0,t]$. Let $t_1$ be the maximal $t_1$ such that $g_0 \in G_{[0,t_1]}$. Choosing $t_1$ (and later $t_k$) like this will ensure that the description of $\tau$ is irredundant. Such a $t_1$ exists by the good properties of the retraction $\rho_{U^-}$; it is exactly the first folding time of the piecewise linear path $\tau$. Therefore, we see that:
\begin{align}
  \label{eq:12}
 \tau(t) = z_0.(t \lambda) \text{ for } t \in [0,t_1]
\end{align}

\subsubsection{Repeating the process}

Let us know try to compute $\tau$ for times greater than $t_1$. Let us write $[t_1,t_1+\epsilon)$ for the filter consisting of segments $\pi([t_1,t])$ for times $t$ slightly after $t_1$. The filter $[t_1,t_1+\epsilon)$ is narrow, so there is a corresponding Iwasawa decomposition. Let us perform the Iwasawa decomposition of $p_0$ with respect to $G_{[t_1,t_1+\epsilon)}$ on the right and $z_0^{-1} U^- z_0$ on the left, i.e. write
\begin{align}
  \label{eq:16}
  p_0 = z_0^{-1} u_1 z_1 p_1
\end{align}
where $u_1 \in U^-$, $z_1 \in W_P$, and $p_1 \in G_{[t_1,t_1+\epsilon)}$.

\newcommand{\bigcell}{\cU}

Next, we choose $t_2$ be the maximal $t_2$, such that $p_1 \in G_{[t_1,t_2]}$ and repeat the above discussion. We repeat the process until for some $N$ we have $1 \in [t_{N},t_{N+1}]$; redefine $t_{N+1}=1$. The fact that such an $N$ exists again uses the good properties of the retraction proved by Gaussent and Rousseau \cite[\S 4.4]{Gaussent-Rousseau-2008}. For all $k \leq N$, we have computed
 \begin{align}
  \label{eq:15}
   g = u_0 \cdots u_{k} z_{k} p_k
 \end{align}
 where $p_k \in G_{[t_k,t_{k+1}]}$. Therefore,
 \begin{align}
   \label{eq:17}
 \tau(t) = z_k. t\lambda \text{ for } t \in [t_k,t_{k+1}]
 \end{align}
that is, we have computed the retraction $\tau$. We see it is piecewise linear.

For $k =N$, we have:
\begin{align}
  \label{eq:18}
  g = u_0 \cdots u_{N+1} z_{N} p_{N+1}
\end{align}
 By construction $p_{N} .  \lambda =  \lambda$. So
\begin{align}
  \label{eq:20}
 g \pi^\lambda.0 =  u_0 \cdots u_{N} z_{N} . 0
\end{align}
So we must have $z_{N}.0 = \nu = \pi^\nu.0$.

\subsection{The space of parameters}
\label{sec:the-space-of-parameters}

Fix a retracted path $\tau$ as above, i.e. fixing the finite data of the folding times $(0=t_0 < t_1 < \cdots t_N < t_{N+1} = 1)$ and folding directions $(z_0,\cdots,z_N)$. We can then ask \emph{how many} paths in $K.\phi_0$ retract to $\tau$. Gaussent and Rousseau   prove that the space of of such parameters is a quasi-affine variety given as an explicit open subset of an affine space \cite[Theorem 6.3]{Gaussent-Rousseau-2008}. Moreover, this quasi-affine variety can be decomposed into pieces that are isomorphic to products of affine lines and multiplicative groups. One therefore concludes that the number of paths in $K.\phi_0$ that retract onto $\tau$ is given by a universal polynomial in $q = \# \kk$. We explain their calculations.

For each $i \in \{0,\ldots,N\}$, the decompositions \eqref{eq:15} for $k \leq i$ show that the data of $u_0,\ldots,u_i$ and $z_0,\ldots,z_i$ determine the path $\phi$ for times $t \in [0,t_i]$. The elements $u_{i+1}$ and $z_{i+1}$ are determined by the Iwasawa decomposition with respect to $G_{[t_i,t_i+\epsilon)}$. This determines the path $\phi$ for times infinitesimally after $t_i$; precisely, this determines the one-sided derivative $\phi_+'(t_i)$. Fixing $z_{i+1}$, we that the space of $\phi_+'(t_i)$ must lie in $ z_i^{-1} (U^- \cap G_{t_i}) z_{i + 1} G_{[t_i,t_i+\epsilon)} /G_{[t_i,t_i+\epsilon)}$.

Using the fact that $(U^- \cap G_{t_i}) = U^{nm-}_{t_i}$ \cite[Proposition 4.14 (P3)]{Rousseau-2016}, we see explicitly that $ z_i^{-1} (U^- \cap G_{t_i}) z_{i + 1} G_{[t_i,t_i+\epsilon)} /G_{[t_i,t_i+\epsilon)}$ can be identified with a finite-dimensional vector space corresponding to a finite set of affine roots. In particular, we have $ z_i^{-1} (U^- \cap G_{t_i}) z_{i + 1} G_{[t_i,t_i+\epsilon)} /G_{[t_i,t_i+\epsilon)} \cong  z_i^{-1} (U^- \cap P_{t_i}) z_{i + 1} P_{[t_i,t_i+\epsilon)} /P_{[t_i,t_i+\epsilon)}$. So the finite-dimensional vector space is identified with a finite-dimensional Schubert cell of the partial flag variety $P_{t_i}/P_{[t_i,t_i+\epsilon)}$. This partial flag variety is exactly the set of faces in in the tangent building at $t_i \lambda$ of type determined by $[t_i \lambda, (t_i+\eps)\lambda)$ (see \cite[\S 5.11 Proposition 4 and Remarks (b)]{Rousseau-2017}). This corresponds to the second paragraph of the proof of \cite[Theorem 6.3]{Gaussent-Rousseau-2008}. In particular, the points on the Schubert cell correspond to the set of minimal galleries considered there.

However, for $i>0$, we additionally need to impose the condition that $\phi$ is a straight line path in any apartment that contains it. That is, the condition that the one-sided derivative $\phi_-'(t_i)$, for times shortly before $t_i$,  agrees with $\phi_+'(t_i)$, the one-sided derivative shortly after $t_i$. This exactly corresponds to the big cell $P_{(t_i-\epsilon,t_i]} \cdot P_{[t_i,t_i+\epsilon)}/P_{[t_i,t_i+\epsilon)}$in the partial flag variety $P_{t_i}/P_{[t_i,t_i+\epsilon)}$ corresponding to the fundamental chamber in the building $\sI^-_{t_i}$. Note that this is because of our condition that $\lambda$ is dominant. In general, we will need to consider a Weyl translate of the big cell. The computation of the big-cell condition
corresponds to to the fifth paragraph of the proof of \cite[Theorem 6.3]{Gaussent-Rousseau-2008}.

We see that, at time $t_i$, the space of choices for $\phi_+'(t_i)$ is equal to a finite-dimensional Schubert cell in $P_{t_i}/P_{[t_i,t_i+\epsilon)}$ intersected with the big cell. Observe that this is a Deodhar set.

\subsubsection{Identifying the Deodhar sets}

Let us write:
\begin{align}
  \label{eq:59}
  z_{i-1}^{-1} U^- z_i \cdot P_{[t_i,t_i+\epsilon)} = z_{i-1}^{-1} U^- z_{i-1} z_{i-1}^{-1} z_i \cdot P_{[t_i,t_i+\epsilon)} 
\end{align}
We have:
\begin{align}
  \label{eq:60}
 z_{i-1}^{-1} U^- z_{i-1} = [z_{i-1}]^{-1} U^- [z_{i-1}]
\end{align}
By \eqref{eq:39-imported}, we have
\begin{align}
  \label{eq:61}
[z_{i-1}]^{-1} U^- [z_{i-1}] \cap P_{t_i} = \iota_{t_i}p_{t_i}([z_{i-1}])^{-1} U^- \iota_{t_i}p_{t_i}([z_{i-1}]) \cap P_{t_i}
=  \iota_{t_i}p_{t_i}([z_{i-1}])^{-1} B^-_{t_i} \iota_{t_i}p_{t_i}([z_{i-1}])
\end{align}
where $B^-_{t_i}$ is the fixator of the fundamental chamber in the tangent building $\sI^-_{t_i}$. In other words, $B^-_{t_i}$ is the parabolic in $P_{t_i}$ corresponding to the roots $\Phi^-_{t_i}$.

Secondly, the big cell in $ P_{t_i} /P_{[t_i,t_i+\epsilon)}  $ is equal to $B^+_{t_i} \cdot P_{[t_i,t_i+\epsilon)} /P_{[t_i,t_i+\epsilon)}  $ where $B^+_{t_i}$ is the parabolic corresponding to the roots $\Phi^+_{t_i}$. Therefore, the Deodhar set considered at time $t_i$ for $i>0$ in section \ref{sec:the-space-of-parameters} above is:
\begin{align}
  \label{eq:63}
 (\iota_{t_i}p_{t_i}([z_{i-1}])^{-1} B^-_{t_i} \iota_{t_i}p_{t_i}([z_{i-1}]) z_{i-1}^{-1}z_i \cdot   P_{[t_i,t_i+\epsilon)}  \cap B^+_{t_i} \cdot P_{[t_i,t_i+\epsilon)} )/P_{[t_i,t_i+\epsilon)} 
\end{align}
(Recall that for $i=0$ there is no big cell condition.)
This Deodhar set is expressed using the Weyl group $W_{t_i}$. Using the isomorphism $[\cdot]$, we can convert this to a Deodhar problem expressed using the the Weyl group $\Wv_{t_i}$, i.e.:
\begin{align}
  \label{eq:064}
 (p_{t_i}([z_{i-1}])^{-1} B^{v,-}_{t_i} p_{t_i}([z_{i-1}]) [z_{i-1}^{-1}z_i] \cdot   P^{v}_{t_i+\epsilon}  \cap B^{v,+}_{t_i} \cdot P^{v}_{t_i+\epsilon} )/P^{v}_{t_i+\epsilon} 
\end{align}
(Recall that $\iota_{t_i}$ is an inverse of $[\cdot]$, and that $p_{t_i}([z_{i-1}]), [z_{i-1}^{-1}z_i] \in \Wv_{t_i}$.)
The groups in \eqref{eq:064} are parabolic subgroups of $P_{t_i}$ corresponding to subsets of the roots dictated by the following table:
\begin{center}
\begin{tabular} {c c c}
  ``Parabolic'' group & Weyl group & Roots \\
   \hline  
  \hline
$P^v_{t_i}$  & $\Wv_{t_i}$ & $\Phi^v_{t_i}$ \\[0.5ex]
$P^v_{t_i+ \epsilon}$  & $\Wv_{t_i+\epsilon}$ & $\Phi^v_{t_i + \epsilon}$ \\[0.5ex]
$B^{v,-}_{t_i}$  & $\{e\}$ & $\Phi^{v,-}_{t_i}$ \\[0.5ex]
$B^{v,+}_{t_i}$  & $\{e\}$ & $\Phi^{v,+}_{t_i}$ \\[0.5ex]
   \hline
\end{tabular}     
\end{center}

By the work of \cite{Bardy}, the root system $\Phi^v_{t_i}$ is equal to the set of real roots in some Kac-Moody root system, and $\Phi^{v,+}_{t_i}$ is equal to the set of positive real roots in that root system. The Weyl group  $\Wv_{t_i}$ is the corresponding Weyl group; in particular it is a Coxeter group. Furthermore, $\Wv_{t_i+\epsilon}$ is a {\it standard} parabolic subgroup of $\Wv_{t_i}$, i.e., it is generated by a subset of the simple generators of $\Wv_{t_i}$.

We have explicitly described the Deodhar problem giving the cardinality of \eqref{eq:064}. In particular, we know that \eqref{eq:064} is non-empty if and only if 
\begin{align}
  \label{eq:66}
p_{t_i}([z_{i-1}]) <_{\Wv_{t_i}/\Wv_{t_i+\epsilon}} p_{t_i}([z_{i-1}]) [z_{i-1}^{-1}z_i]
\end{align}
where $\leq_{\Wv_{t_i}/\Wv_{t_i+\epsilon}}$ is the Bruhat order on the parabolic quotient 
$\Wv_{t_i}/\Wv_{t_i+\epsilon}$. Note that we have a strict equality above because of our procedure of choosing the folding times irredundantly. Furthermore, \eqref{eq:064} has dimension equal to:
\begin{align}
  \label{eq:65}
\ell_{\Wv_{t_i}/\Wv_{t_i+\epsilon}}\left(p_{t_i}([z_{i-1}]) [z_{i-1}^{-1}z_i]\right)
\end{align}
where $\ell_{\Wv_{t_i}/\Wv_{t_i+\epsilon}}$ is the length function for the parabolic quotient ${\Wv_{t_i}/\Wv_{t_i+\epsilon}}$.

\subsection{$U^-$-Hecke paths and the Kapovich-Milson folding condition}

Let us define the set of \emph{(spherical) $U^-$-Hecke paths of shape $\lambda$ and endpoint $\nu$} by:
\begin{align}
  \label{eq:8}
\cH^{\lambda,\nu}_{U^-,K} = \left\{ \rho_{U^-}(\phi) \suchthat \phi \in K.\phi_0 \textand \rho_{U^-}(\phi)(1) = \nu \right\}
\end{align}

Gaussent and Rousseau simply call such paths ``Hecke paths'', but we will emphasize $U^-$ as we will consider an $I_\infty$-version below. Gaussent and Rousseau (generalizing work of Kapovich and Millson \cite{Kapovich-Millson-2008}) describe a folding condition that characterize $U^-$-Hecke paths \cite[Definition 5.2]{Gaussent-Rousseau-2008}, which is a version of ``positive folding'' in the $p$-adic Kac-Moody setting. We will show below how to translate from the Kapovich-Millson folding condition to our condition \eqref{eq:66}. The reason we emphasize \eqref{eq:66} is that this condition will generalize most naturally to the case of $I_\infty$.

\subsubsection{Notational differences with Gaussent and Rousseau}

The main difference between our notations and those in \cite{Gaussent-Rousseau-2008} are the following:
\begin{itemize}
\item Our paths go from $0$ to $\lambda$, where those in \cite{Gaussent-Rousseau-2008} go from $-\lambda$ to $0$. So our time parameter is reversed.
\item Their folding condition involves the Weyl fixator $\Wv_{\tau(t_k)}$, whereas we work with $\Wv_{t_k \lambda}$. That is, they are looking at time $t_k$ on the retracted path, whereas we are looking at time $t_k$ on the straight-line path.
\item They pose their condition in the vectorial fixator  $\Wv_{\tau(t_k)}$. Working with the non-vectorial fixator is better for when we replace $U^-$ by $I_\infty$.
\item As mentioned before, we have $\pi^{\lambda}.0 = \lambda$, not $-\lambda$ as is the convention in \cite{Gaussent-Rousseau-2008}.
\end{itemize}

Aside from these notational differences, our notion of the retraction $\rho_{U^-}$ agree with the retraction of $\rho_{-\infty}$. Therefore, our notion of $U^-$-Hecke paths should agree. In particular, the Kapovich-Millson folding condition is equivalent to our condition \eqref{eq:66}. Let us understand the equivalence of the conditions combinatorially.

\subsubsection{The Kapovich-Millson folding condition in our notation}

Let us suppose that we are given the data of a prospective $U^-$-Hecke path, i.e. folding times $(0=t_0 < t_1 < \cdots t_N < t_{N+1} = 1)$ and folding directions $(z_0,\cdots,z_N)$. For each $k$, write $z_k = \pi^{\eta_k}w_k$.
The actual path $\tau : [0,1] \rightarrow \AA$ is given by
\begin{align}
  \label{eq:75}
  \tau(t) = z_k.t \lambda \text{ for } t \in [t_{k},t_{k+1}]
\end{align}
Let us fix $k$. For time shortly before $t_k$, we have
\begin{align}
  \label{eq:76}
  \tau(t) = z_{k-1}.t \lambda = t w_{k-1}(\lambda) + \eta_{k-1}
\end{align}
which implies that:
\begin{align}
  \label{eq:77}
  \tau'_-(t_k) =  w_{k-1}(\lambda)
\end{align}
Here $\tau'_-(t_k)$ is the one-sided derivative of $\tau$ at times before $t_k$. Similarly, we compute:
\begin{align}
  \label{eq:78}
  \tau'_+(t_k) =  w_k(\lambda)
\end{align}
In our notation, the Kapovich-Millson folding condition is \cite[Definition 5.2]{Gaussent-Rousseau-2008}
\begin{align}
  \label{eq:79}
\tau'_-(t_k) \leq_{\Wv_{\tau(t_k)}/\Wv_{\tau([t_k,t_k+\epsilon)}} \tau'_+(t_k)
\end{align}
which means there exists a sequence of positive roots $\beta_1, \cdots, \beta_s \in \Phi^{v,+}$, and elements $ \tau'_+(t_k) = \xi_{0} , \xi_{1}, \cdots, \xi_{s} = \tau'_-(t_k)$ satisfying:
\begin{itemize}
\item  $\xi_i = s_{\beta_i}(\xi_{i-1})$
\item  $ \langle \beta_i, \xi_{i-1} \rangle < 0$
\item  $\beta_i \in \Phi^{v}_{\tau(t_k)}$, i.e. $\langle \beta_i, \tau(t_k)  \rangle \in \ZZ$
\end{itemize}

The subscript ${\tau([t_k,t_k+\epsilon)}$ indicates that we are considering the Bruhat order in a parabolic quotient (this corresponds to the fact that we require the strict inequality $ \langle \beta_i, \xi_{i-1} \rangle < 0$ in the second condition above).

\subsubsection{Translating the folding condition}

Let us unpack the Kapovich-Millson folding condition. 

Equation \eqref{eq:79} means, using \eqref{eq:77} and \eqref{eq:78}, that there exists a sequence of positive roots $\beta_1, \cdots, \beta_s \in \Phi^{v,+}_{\tau(t_k)}$ and elements $w_k(\lambda) = \xi_0, \xi_1, \cdots, \xi_s= w_{k-1}(\lambda)$ satisfying:
\begin{itemize}
\item  $\xi_i = s_{\beta_i}(\xi_{i-1})$
\item  $ \langle \beta_i, \xi_{i-1} \rangle < 0$
\end{itemize}
Expanding the first condition, we have:
\begin{align}
  \label{eq:80}
  \xi_i = s_{\beta_i} s_{\beta_{i-1}} \cdots s_{\beta_1} w_k (\lambda)
\end{align}
Let us define $\zeta_1, \cdots, \zeta_s$ by:
\begin{align}
  \label{eq:81}
  \zeta_i = - w_k^{-1} s_{\beta_1} \cdots s_{\beta_{i-1}}(\beta_i)
\end{align}
The $ \langle \beta_i, \xi_{i-1} \rangle < 0$ condition above can be rewritten as:
\begin{align}
  \label{eq:82}
  \langle \zeta_i, \lambda \rangle > 0
\end{align}
We can also write:
\begin{align}
  \label{eq:84}
  \beta_i = - w_k s_{\zeta_1} \cdots s_{\zeta_{i-1}} (\zeta_i)
\end{align}
Let us rephrase the Kapovich-Millson folding condition in terms of $\zeta_1, \cdots, \zeta_s$.
The condition that $\beta_i \in \Phi^v_{\tau(t_k)}$, i.e.  $\langle \beta_i, \tau(t_k) \rangle \in \ZZ$, is equivalent to (we have $\tau (t_k) =   t w_k(\lambda) +\eta_k$, but the $\eta_k$ is irrelevant for the purpose of determining integrality):
\begin{align}
  \label{eq:83}
  \langle w_k s_{\zeta_1} \cdots s_{\zeta_{i-1}} (\zeta_i), t_k w_k(\lambda) \rangle  \in \ZZ
\end{align}
Therefore, $s_{\zeta_1} \cdots s_{\zeta_{i-1}} (\zeta_i) \in \Phi^{v}_{t_k}$ for all $i$. By induction, we have $\zeta_i \in \Phi^{v}_{t_k}$ for all $i$. Condition \eqref{eq:82} implies (because $\lambda$ is dominant) that $\zeta_i \in \Phi^{v,+}_{t_k}$ and that $s_{\zeta_i}$ is a non-trivial element of the quotient $\Wv_{t_k}/\Wv_{t_k+\epsilon}$.

Therefore, we can rewrite the condition in the following way. There exists a sequence of positive roots $\zeta_1, \cdots, \zeta_s \in \Phi^{v,+}_{t_k}$ such that $\langle \zeta_i, \lambda \rangle \neq 0$ and a chain in the Bruhat order of $\Wv$:
\begin{align}
  \label{eq:85}
  w_k \leftarrow w_k s_{\zeta_1} \leftarrow w_k s_{\zeta_1} s_{\zeta_2} \leftarrow \cdots \leftarrow w_k s_{\zeta_1} s_{\zeta_2} \cdots s_{\zeta_s} = w_{k-1}
\end{align}

Explicitly, the condition that $\beta_1, \cdots, \beta_s \in \Phi^{v}_{\tau(t_k)}$ is equivalent to the condition that $\zeta_1, \cdots, \zeta_s \in \Phi^{v}_{t_k}$. The condition that $\langle \beta_i, \xi_{i-1} \rangle > 0$ is equivalent to the condition that $\zeta_1, \cdots, \zeta_s \in \Phi^{v,+}$ and $\langle \zeta_i, \lambda \rangle \neq 0$. Finally, the condition that $\beta_1, \cdots, \beta_s > 0$ is equivalent to the condition that \eqref{eq:85} is a chain in the Bruhat order.

\subsubsection{Comparing with our condition}

Let us define $\overline{w_{k-1}} \in \Wv_{t_k}$ by:
\begin{align}
  \label{eq:87}
 \overline{w_{k-1}}^{-1} ( \Phi^{v,+}_{t_k})  =  w_{k-1}^{-1}( \Phi^{v,+}) \cap \Phi^v_{t_k} 
\end{align}
Then our condition \eqref{eq:66} is:
\begin{align}
  \label{eq:88}
 \overline{w_{k-1}} \leq_{\Wv_{t_k}/\Wv_{t_k+\epsilon}} \overline{w_{k-1}} w_{k-1}^{-1} w_k 
\end{align}
Let us rewrite \eqref{eq:85} as:
\begin{align}
  \label{eq:86}
  w_{k-1} \rightarrow w_{k-1}s_{\zeta_s} \rightarrow w_{k-1}s_{\zeta_s}s_{\zeta_{s-1}} \rightarrow \cdots \rightarrow w_{k-1}s_{\zeta_s}s_{\zeta_{s-1}}\cdots{s_{\zeta_1}} = w_k
\end{align}
For all $i$, $w_{k-1} s_{\zeta_s} \cdots s_{\zeta_{i+1}}(\zeta_i) \in \Phi^{v,+}$ by \eqref{eq:86}. Moreover,
\begin{align}
  \label{eq:89}
 w_{k-1}^{-1} w_{k-1} s_{\zeta_s} \cdots s_{\zeta_{i+1}}(\zeta_i) = s_{\zeta_s} \cdots s_{\zeta_{i+1}}(\zeta_i) \in \Phi^{v}_{t_k}
\end{align}
Therefore, $s_{\zeta_s} \cdots s_{\zeta_{i+1}}(\zeta_i)$ is an element of the right-hand side of \eqref{eq:87}. Therefore, there exists $\gamma_i \in \Phi^{v,+}_{t_k}$ such that:
\begin{align}
  \label{eq:90}
 s_{\zeta_s} \cdots s_{\zeta_{i+1}}(\zeta_i)  = \overline{w_{k-1}}^{-1}(\gamma_i)
\end{align}
Equivalently, for all $i$:
\begin{align}
  \label{eq:91}
 \overline{w_{k-1}} s_{\zeta_s} \cdots s_{\zeta_{i+1}}(\zeta_i) \in \Phi^{v,+}_{t_k}
\end{align}
Therefore \eqref{eq:86} is equivalent to:
\begin{align}
  \label{eq:92}
  \overline{w_{k-1}} \rightarrow \overline{w_{k-1}} s_{\zeta_s} \rightarrow \overline{w_{k-1}}s_{\zeta_s}s_{\zeta_{s-1}} \rightarrow \cdots \rightarrow \overline{w_{k-1}}s_{\zeta_s}s_{\zeta_{s-1}}\cdots{s_{\zeta_1}} = \overline{w_{k-1}} w_{k-1}^{-1}w_k
\end{align}
This is a chain in the Bruhat order of $\Wv_{t_k}$. Because of our condition that $\langle \zeta_i, \lambda \rangle \neq 0$, this descends to a chain in the Bruhat order of the quotient $\Wv_{t_k}/\Wv_{t_k+\epsilon}$. Therefore we see that our condition \eqref{eq:88} is equivalent to the Kapovich-Millson folding condition.

\subsubsection{Finiteness of the set of $U^-$-Hecke paths}

As we have discussed above, the elements $\tau \in \cH^{\lambda,\nu}_{U^-}$ are determined by the data of a finite sequence of folding times $0=t_0 < t_1 < \cdots < t_N < t_{N+1} = 1$ and a finite sequence of folding directions $z_0,\ldots,z_{N} \in W = W_{Q}$ subject to the inequalities \eqref{eq:66}. If we ask that the inequalities in \eqref{eq:66} are strict, this data is uniquely determined. 

Fix $\tau \in \cH^{\lambda,\nu}_{U^-}$. We have seen that the set of $\phi \in K.\phi_0$ such that $\rho_{U^-}(\phi) = \tau$ is given by an explicit a product of $N$ Deodhar sets determined by the folding times and directions. In particular, when $\kk$ is a finite field, this set is finite with cardinality varying polynomially in $q = \#\kk$.

Furthermore, Gaussent and Rousseau prove \cite[Corollary 5.9]{Gaussent-Rousseau-2008} that the set $\cH^{\lambda,\nu}_{U^-,K}$ is \emph{finite} using a generalization of the combinatorics controlling Lakshmibai-Seshadri paths. Therefore, they can conclude that the set is finite, and hence they obtain an explicit decomposition of $(K \pi^\lambda K \cap U^- \pi^\nu K)/K$ into a finite number of disjoint subsets (indexed by $\cH^{\lambda,\nu}_{U^-,K}$) each of whose cardinalities is given by product of explicit Deodhar problems. 

Our above discussion shows that the set $\cH^{\lambda,\nu}_{U^-,K}$ is controlled by certain chains in the Bruhat order. It seems natural to have a general proof of finiteness of $\cH^{\lambda,\nu}_{U^-,K}$ using only properties of the Bruhat order. We have not pursued this here.

However, below when we replace $U^-$ by $I_\infty$, this will be our approach to showing finiteness of the analogous notion (but we will work with the far more mysterious $G$-affine Bruhat order). We therefore hope that there should be a uniform method using variants of $G$-affine Bruhat orders to show finiteness of sets of $U^-$-Hecke paths that covers the $U^-$ and $I_\infty$ (and more general) cases.

\section{Computing Spherical $R$-polynomials}
\label{sec:computing-spherical-R-polynomials}

Let $\lambda$ and $\nu$ be dominant weights in $\overset{\circ}{\cT}$. Our goal is to compute the cardinality of
\begin{align}
  \label{eq:131}
(K \pi^\lambda K \cap I_\infty \pi^\nu K )/K
\end{align}
However, because we have not yet constructed the retraction $\rho_{I_\infty}$, we cannot yet accomplish this. Nonetheless, using the case of $\rho_{U^-}$ and the Satake transform as a guide, we can compute the expected answer. 

Below, we will proceed assuming that $\rho_{I_\infty}$ is defined and satisfies formal properties analogous to those of $\rho_{U^-}$. Our computation can therefore be taken as a definition of $R$-polynomials. The main mathematical result is that our definition is well-defined (i.e. it involves a finite sum).

\subsection{Computing the retraction $\rho_{I_\infty}$}

Let $\phi_0$ be the straight-line path from $0$ to $\lambda$, and as before we identify $K . \phi_0$ with $(K \pi^\lambda K )/K$. Let $\phi \in K . \phi_0$. Then there exists $g \in K$ such that:  
\begin{align}
  \label{eq:013}
  \phi(t) = (g, t \pi^\lambda.0)
\end{align}
Analogously to the computation in \S \ref{sec:computing-retractions-rho-infty}, we produce folding times $0 = t_0 < t_1 < \cdots t_N < t_{N+1} = 1$ such that
\begin{align}
  \label{eq:018}
  g = i_0 \cdots i_k x_k p_k
\end{align}
where $i_0, \cdots, i_k \in I_\infty$ and $p_k \in G_{[t_k,t_{k+1}]}$.

\subsubsection{The retracted path}
Let $\tau = \rho_{I_\infty}(\phi)$ be the retracted path. We have:
\begin{align}
  \label{eq:020}
 \tau(t) = x_k.( t \lambda) \text{ for } t \in [t_k,t_{k+1}]
\end{align}
Finally, the condition that $\phi$ corresponds to a point of $I_\infty \pi^\nu K /K$ becomes:
\begin{align}
  \label{eq:021}
 x_N . \lambda = \nu 
\end{align}

\subsection{The Deodhar sets}

Let us fix a retracted path $\tau$ as above in \eqref{eq:020}. This corresponds to a sequence $0 = t_0 < t_1 < \cdots t_N < t_{N+1} = 1$ of folding times and  folding directions $x_0, \cdots x_N$. We then consider the set of $\phi \in K.\phi_0$ such that $\rho_{I_\infty}(\phi) = \tau$ where $\tau$ is defined by \eqref{eq:020}.

The set of such $\phi$ are determined by the $i_k$ that appear in \eqref{eq:018}. Because the $i_k$ arise from Schubert cells, we can arrange it so that the $i_k$ are uniquely determined by requiring that they lie in a specified subset of $I_\infty$ determined by $x_{k-1}$ and $x_k$.
The set of possible choices of $i_0$ is the Schubert cell:
\begin{align}
  \label{eq:21}
   I_\infty x_0 \cdot P_{[0,0+ \epsilon)} /  P_{[0,0+ \epsilon)} = I_\infty x_0 \cdot P_{[t_0,t_0+ \epsilon)} / P_{[t_0,t_0+ \epsilon)} \subseteq P_{t_0}/P_{[t_0,t_0+ \epsilon)}
\end{align}
For $k >0$, the set of possible choices of $i_k$ is the following set
\begin{align}
  \label{eq:19}
 (x_{k-1}^{-1} I_\infty x_k \cdot P_{[t_k,t_k+\epsilon)} \cap  P_{(t_{k}-\epsilon,t_k]} \cdot P_{[t_k,t_k+\epsilon)} )/P_{[t_k,t_k+\epsilon)} \subseteq P_{t_k}/P_{[t_k,t_k+\epsilon)}
\end{align}
which is the intersection of a Schubert cell with the big cell. 
As in the $U^-$ case, the big cell condition arises because $\phi$ is a straight path in the masure, i.e. its two sided derivatives at $t_k$ must agree.

\subsubsection{A finiteness result}

For the following proof we will use of our assumption that $\bG$ is affine and that $\overset{\circ}{\cT}$ consists of positive level coweights.  However, we expect that this restriction is unnecessary.

\begin{Proposition}
  \label{proposition:existence-of-overline-x-k}
  There exists unique $\overline{x_{k-1}} \in W_{t_k}$ such that:
  \begin{align}
    \label{eq:22}
     x_{k-1}^{-1}(\Phi^-) \cap \Phi_{t_k} = \overline{x_{k-1}}^{-1}(\Phi_{t_k}^{-})
  \end{align}
  
\end{Proposition}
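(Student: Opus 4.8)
The plan is to reduce this to the vectorial statement \eqref{eq:39-imported}, which already produces a unique $z^{(1)} \in \Wv_x$ with $z(\Phi^{v,-}) \cap \Phi^v_x = z^{(1)}(\Phi^{v,-}_x)$, and then transport that element back up along the isomorphism $[\cdot] : W_{t_k} \xrightarrow{\sim} \Wv_{t_k}$ from \eqref{eq:56-imported}. So first I would set $z = [x_{k-1}] \in \Wv$, apply \eqref{eq:39-imported} at the point $x = t_k\lambda$ to obtain a unique $z^{(1)} = [x_{k-1}]^{(1)} \in \Wv_{t_k}$, and then \emph{define} $\overline{x_{k-1}} = \iota_{t_k}(z^{(1)}) \in W_{t_k}$. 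The content of the proposition is then that this choice satisfies \eqref{eq:22} and is the unique element of $W_{t_k}$ doing so.

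The key step is to show that the projection $\Phi \twoheadrightarrow \Phi^v$ carries \eqref{eq:22} to the vectorial identity \eqref{eq:39-imported} and reflects it back. Here one uses: (i) the map $\Phi_{t_k} \to \Phi^v$ is a bijection onto $\Phi^v_{t_k}$ (stated after \eqref{eq:42-imported}), and it identifies $\Phi^-_{t_k}$ with $\Phi^{v,-}_{t_k}$ — this is where I need the hypothesis that $\bG$ is affine and $\lambda \in \overset{\circ}{\cT}$ is positive level, since that is exactly the regime in which $\Phi_{t_k}$ is a Kac--Moody root system with $\Phi^+_{t_k}$ as its positive real roots (the paragraph around \eqref{eq:57-imported}); (ii) the map $W_{t_k} \to \Wv_{t_k}$, $z \mapsto [z]$, is a group isomorphism intertwining the actions on $\Phi_{t_k}$ and $\Phi^v_{t_k}$; (iii) $x_{k-1}^{-1}(\Phi^-) \cap \Phi_{t_k}$ projects to $[x_{k-1}]^{-1}(\Phi^{v,-}) \cap \Phi^v_{t_k}$ — this requires knowing that $\Phi^-$ maps onto $\Phi^{v,-}$ compatibly with the $W$-action, which is where the fact that $x_{k-1}$ is an \emph{affine} Weyl element (of the form $\pi^{\eta}w$) and the definition \eqref{eq:36-imported} of $\Phi^-$ as $\{\beta + n\pi : n<0 \text{ or } (n=0, \beta<0)\}$ come in. Concretely, $\pi^\eta w$ sends $\beta + n\pi$ to $w(\beta) + (n + \langle\beta,\eta\rangle)\pi$, so intersecting with $\Phi_{t_k}$ (roots vanishing at $t_k\lambda$, hence with controlled $\pi$-coefficient) and projecting recovers the vectorial picture after discarding the $n=0$ versus $n\neq 0$ bookkeeping; one checks the sign conditions match.

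Given that dictionary, existence is immediate: apply $\iota_{t_k}$ (which is a bijection $\Wv_{t_k} \to W_{t_k}$ intertwining the actions) to the vectorial identity. For uniqueness, suppose $\overline{x}_1, \overline{x}_2 \in W_{t_k}$ both satisfy \eqref{eq:22}; then $[\overline{x}_1], [\overline{x}_2] \in \Wv_{t_k}$ both satisfy \eqref{eq:39-imported} for $z = [x_{k-1}]$, so $[\overline{x}_1] = [\overline{x}_2]$ by the uniqueness clause there, and applying $\iota_{t_k}$ gives $\overline{x}_1 = \overline{x}_2$.

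I expect the main obstacle to be verifying clause (iii) carefully — namely that intersecting $x_{k-1}^{-1}(\Phi^-)$ with $\Phi_{t_k}$ and then projecting to $\Phi^v$ really does give $[x_{k-1}]^{-1}(\Phi^{v,-}) \cap \Phi^v_{t_k}$, rather than something that differs by a few roots at the ``$n = 0$ boundary.'' This is precisely the point where positive level is used: for $\lambda$ of positive level, the affine roots in $\Phi_{t_k}$ have bounded $\pi$-coefficient determined by $\langle\cdot,t_k\lambda\rangle \in \ZZ$, so the translation part $\pi^{\eta_{k-1}}$ cannot move a root across the $n = 0$ wall in a way that survives the intersection; without this, the bijection $\Phi^-_{t_k} \cong \Phi^{v,-}_{t_k}$ could fail and the statement would need modification. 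The rest is bookkeeping with the isomorphisms $[\cdot]$ and $\iota_{t_k}$ and is routine given Lemma~\ref{lem:parabolic-factorization} and the discussion around \eqref{eq:42-imported}.
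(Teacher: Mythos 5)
Your reduction breaks at clause (iii), and the failure is not a matter of boundary bookkeeping: the identity you need there is false. The set on the left of \eqref{eq:22} is $\{\alpha \in \Phi_{t_k} : x_{k-1}(\alpha) \in \Phi^-\}$, and since $\Phi^-$ is defined by the sign of the $\pi$-coefficient \eqref{eq:36-imported}, membership depends on the translation part of $x_{k-1}$, not only on $[x_{k-1}]$. Concretely, write $x_{k-1} = w\pi^\eta$; then $x_{k-1}(\beta + n\pi) = w(\beta) + (\langle \eta,\beta\rangle + n)\pi$. Take $w = e$ and $\eta$ of level zero with $\langle \eta,\beta\rangle$ very negative for some $\beta \in \Phi^{v,+}_{t_k}$: the positive affine root $\beta + t_k\langle\lambda,\beta\rangle\pi \in \Phi^+_{t_k}$ then lies in $x_{k-1}^{-1}(\Phi^-)\cap\Phi_{t_k}$, whereas your recipe (apply \eqref{eq:39-imported} to $z = [x_{k-1}] = e$ and lift by $\iota_{t_k}$) produces $\overline{x_{k-1}} = e$, whose inversion set $\Phi^-_{t_k}$ misses it. So the element $\overline{x_{k-1}}$ is genuinely an element of $W_{t_k}$ determined by the full affine element $x_{k-1}$, and it cannot be read off from the vectorial statement \eqref{eq:39-imported}. (Your construction is essentially the one the paper uses in the $U^-$/Satake setting, where the relevant set is $\Phi^{U^-}$, which \emph{is} determined by vectorial signs; for $I_\infty$ the relevant set is $\Phi^-$, and that is exactly where the two situations diverge.) A second structural problem: even after projecting, you would need to know that the resulting subset of $\Phi^v_{t_k}$ is an inversion set for $\Wv_{t_k}$; since $x_{k-1}$ lives in $W = \Wv \ltimes P$, which is not a Coxeter group, there is no off-the-shelf reflection-subgroup fact to quote, and this is precisely the content one must prove.

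The paper's proof supplies exactly this missing content: it shows by direct computation that $x_{k-1}^{-1}(\Phi^-)\cap\Phi^+_{t_k}$ is \emph{finite} — writing $x_{k-1} = w\pi^\eta$ and using that $\lambda$, hence $\eta + t_k\lambda$, has positive level, so $\langle \eta + t_k\lambda,\beta\rangle \geq 0$ and $w(\beta) > 0$ for almost all positive $\beta$ — observes that this set is biconvex, and then invokes Papi's theorem to obtain a unique $\overline{x_{k-1}} \in W_{t_k}$ with the stated inversion set. You correctly sensed that positive level is the load-bearing hypothesis, but it is used for this finiteness of the inversion set, not to control an $n=0$ wall-crossing in a projection to $\Phi^v$. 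To repair your argument you would have to abandon the reduction to \eqref{eq:39-imported} and prove finiteness plus biconvexity of $x_{k-1}^{-1}(\Phi^-)\cap\Phi^+_{t_k}$ directly, at which point you have reproduced the paper's proof.
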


\begin{proof}
  We will prove that $x_{k-1}^{-1}(\Phi^-) \cap \Phi^{+}_{t_k}$ is finite. Furthermore, it is clearly a biconvex subset of $\Phi^{+}_{t_k}$. (Recall that a subset $S \subset \Phi^{+}_{t_k}$ is convex if for all $\alpha, \beta \in S$, we have $\alpha + \beta \in S$. The set $S$ is biconvex if it is convex and 
$\Phi^{U^+}_{t_k} \backslash S$ is also convex.)

By a theorem of Papi \cite{Papi-1994}, it follows that $x_{k-1}^{-1}(\Phi^-) \cap \Phi^{+}_{t_k} = \overline{x_{k-1}}^{-1}(\Phi_{t_k}^{-}) \cap \Phi_{t_k}^{+}$ for a unique $\overline{x_{k-1}} \in W_{t_k}$. Because $x_{k-1}^{-1}(\Phi^-) \cap \Phi^{}_{t_k}$ includes either every root in $\Phi_{t_k}$ or its negative, but not both, we also have $x_{k-1}^{-1}(\Phi^-) \cap \Phi^{-}_{t_k} = \overline{x_{k-1}}^{-1}(\Phi_{t_k}^{-}) \cap \Phi_{t_k}^{+}$. Therefore we can conclude that $ x_{k-1}^{-1}(\Phi^-) \cap \Phi_{t_k} = \overline{x_{k-1}}^{-1}(\Phi_{t_k}^{-})$.

  We therefore have the task of proving that $x_{k-1}^{-1}(\Phi^-) \cap \Phi^{+}_{t_k}$ is finite. Write $x_{k-1} = w \pi^\eta$ where $w \in \Wv$ and $\eta$ has level zero. Then equivalently we will show that $w\pi^\eta (\Phi^{+}_{t_k}) \cap \Phi^-$ is finite. We have
  \begin{align}
    \label{eq:13}
  \Phi^{+}_{t_k} = \left \{ \beta[t_k \langle \lambda, \beta \rangle ] \suchthat t_k \langle \lambda, \beta \rangle \in \ZZ \textand \beta \text{ is a positive real root }\right \}
  \end{align}
  Because $\lambda$ is dominant, we have $\beta[t_k \langle \lambda, \beta \rangle ] = \beta + t_k \langle \lambda, \beta \rangle \pi$. Even if $\lambda$ were not dominant, but merely lived in the Tits cone, this condition is true for almost all $\beta$.

  Compute
  \begin{align}
    \label{eq:14}
  w\pi^\eta (\beta + t_k \langle \lambda, \beta \rangle \pi) =  w(\beta) + (\langle \eta, \beta \rangle + t_k \langle \lambda, \beta \rangle )\pi = w(\beta) + (\langle \eta + t_k  \lambda, \beta \rangle )\pi
  \end{align}

  As $\lambda$ has positive level, $\eta + t_k  \lambda$ has the same positive level and lies in the Tits cone. Therefore, $\langle \eta + t_k  \lambda, \beta \rangle$ is greater than or equal to zero for almost all positive real roots $\beta$. Finally, observe that $w(\beta)$ is a positive real root for almost all $\beta$.
\end{proof}

\subsubsection{Identifying the Deodhar problem}

Let $B^-_{t_k}$ be the fixator of the local chamber $(t_k-\epsilon C_f, t_k]$, where $(t_k-\epsilon C_f, t_k]$ denotes the chamber germ based at $t_k \lambda$ in the direction of the anti-dominant vectorial chamber $-C_f$.

The group theoretic analogue of Proposition \ref{proposition:existence-of-overline-x-k}, is the following statement, which we will take as an assumption toward our definition of $R$-polynomials. A proof of this statement will required a detailed analysis of the group $I_\infty$, which we will take up in a future work.
\begin{Assumption}
  \begin{align}
    \label{eq:59}
x_{k-1}^{-1} I_\infty  x_{k-1} \cap P_{t_k} = \overline{x_{k-1}}^{-1} B^-_{t_k} \overline{x_{k-1}}
  \end{align}
\end{Assumption}
Observe that a special case of this assumption is that $B^-_{t_k} = I_\infty \cap P_{t_k}$. With this assumption we see that \eqref{eq:19} is a Deodhar problem. The relevant Weyl groups and roots are given in the table below.
\begin{center}
\begin{tabular} {c c c}
    Parahoric group & Weyl group & Roots \\
   \hline  
  \hline
$P_{t_k}$  & $W_{t_k}$ & $\Phi_{t_k}$ \\[0.5ex]
$P_{[t_k, t_k + \epsilon)}$  & $W_{[t_k,x_k+\epsilon)}$ & $\Phi_{[t_k, t_k + \epsilon)}$ \\[0.5ex]
$P_{(t_k-\epsilon, t_k] }$  & $W_{(t_k-\epsilon, t_k]}$ & $\Phi_{(t_k-\epsilon, t_k]}$ \\[0.5ex]
$B^-_{t_k}$  & $\{e\}$ & $\Phi_{(t_k-\epsilon C_f, t_k]}$ \\[0.5ex]
   \hline
\end{tabular}     
\end{center}
We can then rewrite \eqref{eq:19} as:
\begin{align}
  \label{eq:23}
 (\overline{x_{k-1}}^{-1} B^-_{t_k} \overline{x_{k-1}} x_{k-1}^{-1}x_k \cdot P_{[t_k,t_k+\epsilon)} \cap  P_{(t_{k}-\epsilon,t_k]} \cdot P_{[t_k,t_k+\epsilon)} )/P_{[t_k,t_k+\epsilon)} \subseteq P_{t_k}/P_{[t_k,t_k+\epsilon)}
\end{align}
This set is non-empty if and only if
\begin{align}
  \label{eq:24}
\overline{x_{k-1}} \leq_{W_{t_k}/W_{[t_k,t_k+\epsilon)}} \overline{x_{k-1}} x_{k-1}^{-1}x_k  
\end{align}
and it has dimension equal to:
\begin{align}
  \label{eq:54}
\ell_{W_{t_k}/W_{[t_k,t_k + \epsilon)} }(  \overline{x_{k-1}} x_{k-1}^{-1}x_k)  
\end{align}
Moreover, we can explicitly count the size of \eqref{eq:23} using distinguished subexpressions.

The condition that $\lambda$ is dominant is not crucial to the discussion above. In \S \ref{sec:iwahori-spherical-r-polynomials} below, we will explain how to relax the dominance condition.

\subsection{The folding condition for $I_\infty$-Hecke paths}

\subsubsection{$I_\infty$-Hecke paths}
Let us define the set of {\it (spherical) $I_\infty$-Hecke paths} of shape $\lambda$ and endpoint $\nu$ by:
\begin{align}
  \label{eq:25}
\cH^{\lambda,\nu}_{I_\infty} = \{ \rho_{I_\infty}(\phi) \suchthat \phi \in K.[0,\lambda] \textand \rho_{I_\infty}(\phi)(1)=\nu \}
\end{align}
Combinatorially, $\tau \in \cH^{\lambda,\nu}_{I_\infty}$ is given by a set of times $0 = t_0 < t_1 < \cdots t_N < t_{N+1} = 1$ and elements $x_k \in W$ such that 
\begin{align}
  \label{eq:26}
x_{k-1}^{-1}x_k \in W_{t_k}  
\end{align}
and a {\it folding condition} 
\begin{align}
  \label{eq:24}
\overline{x_{k-1}} \leq_{W_{t_k}/W_{[t_k,t_k+\epsilon)}} \overline{x_{k-1}} x_{k-1}^{-1}x_k  
\end{align}
and:
\begin{align}
  \label{eq:27}
 x_N. \lambda = \nu 
\end{align}
To not double count, we further require
\begin{align}
  \label{eq:28}
  x_{k-1}. \lambda \neq x_k . \lambda
\end{align}
for all $k$.

\subsubsection{Minimal length representatives}

Although only the class of $x_{k-1}$ in $W/W_{[t_k, t_k+ \epsilon)}$ matters for the purpose of specifying an {\it $I_\infty$-Hecke path}, it will be useful for our computation below to choose a specific representative $\tilde{x_{k-1}} \in W$ that plays the role of a ``minimal length representative''. Let $\tilde{\overline{x_{k-1}}}$ be the minimal length representative of $\overline{x_{k-1}}$ in $W_{t_k}$ modulo $W_{[t_k,t_k+\epsilon)}$. Then we define $\tilde{x_{k-1}} = x_{k-1} \overline{x_{k-1}}^{-1} \tilde{\overline{x_{k-1}}}$.
This definition guarantees that:
\begin{align}
  \label{eq:115}
     \tilde{x_{k-1}}^{-1}(\Phi^-) \cap \Phi_{t_k} = \tilde{\overline{x_{k-1}}}^{-1}(\Phi_{t_k}^{-})
\end{align}
From now on we will drop the tildes and assume that $x_{k-1}$ was initially chosen so that $\tilde{\overline{x_{k-1}}} = \overline{x_{k-1}}$.

Observe that the minimal length condition forces $\overline{x_0} = e$. It is conceivable that the minimal length condition also forces $\overline{x_k} = e$, but we do not currently see a proof.

\subsubsection{The folding condition}
Let us fix an element $\tau \in \cH^{\lambda,\nu}_{I_\infty}$ as above, and let us keep the $t_k$ and $x_k$ notation as above. We will try to reinterpret the folding condition $\overline{x_{k-1}} \leq_{W_{t_k}/W_{[t_k,t_k+\epsilon)}} \overline{x_{k-1}} x_{k-1}^{-1}x_k$ in a more useful form. Because of our minimal length assumption for $\overline{x_{k-1}}$, we can find a sequence $\zeta_1[r_1], \cdots, \zeta_s[r_s] \in \Phi^{+}_{t_k} \backslash \Phi^{+}_{[t_k,t_k+\epsilon)}$ (equivalently $\langle \zeta_i, \lambda \rangle \neq 0$) such that:
\begin{align}
  \label{eq:29}
  \overline{x_{k-1}} \underset{\Phi^+}{\rightarrow}  \overline{x_{k-1}}s_{\zeta_1[r_1]} \underset{\Phi^+}{\rightarrow} \overline{x_{k-1}}s_{\zeta_1[r_1]}s_{\zeta_2[r_2]}  \underset{\Phi^+}{\rightarrow} \cdots
\underset{\Phi^+}{\rightarrow} \overline{x_{k-1}}s_{\zeta_1[r_1]}s_{\zeta_2[r_2]}\cdots{s_{\zeta_s[r_s]}} = 
\overline{x_{k-1}} x_{k-1}^{-1}x_k 
\end{align}
Here
\begin{align}
  \label{eq:31}
\overline{x_{k-1}}s_{\zeta_1[r_1]}s_{\zeta_2[r_2]}\cdots{s_{\zeta_{j-1}[r_{j-1}]}} 
\underset{\Phi^+}{\rightarrow} \overline{x_{k-1}}s_{\zeta_1[r_1]}s_{\zeta_2[r_2]}\cdots{s_{\zeta_j[r_j]}}
\end{align}
means:
\begin{align}
  \label{eq:30}
 \overline{x_{k-1}}s_{\zeta_1[r_1]}s_{\zeta_2[r_2]}\cdots{s_{\zeta_{j-1}[r_{j-1}]}}(\zeta_{j}[r_{j}]) \in \Phi^+
\end{align}
Observe that we in fact have
\begin{align}
  \label{eq:33}
 \overline{x_{k-1}}s_{\zeta_1[r_1]}s_{\zeta_2[r_2]}\cdots{s_{\zeta_{j-1}[r_{j-1}]}}(\zeta_{j}[r_{j}]) \in \Phi^+_{t_k}
\end{align}
because $\overline{x_{k-1}} \in W_{t_k}$.
Recall that $\overline{x_{k-1}} \in W_{t_k}$ satisfies the condition:
\begin{align}
  \label{eq:32}
 \overline{x_{k-1}}^{-1}(\Phi^+_{t_k}) = x_{k-1}^{-1}(\Phi^+) \cap  \Phi_{t_k} 
\end{align}
Therefore  \eqref{eq:33} is equivalent to
\begin{align}
  \label{eq:34}
 s_{\zeta_1[r_1]}s_{\zeta_2[r_2]}\cdots{s_{\zeta_{j-1}[r_{j-1}]}}(\zeta_{j-1}[r_{j-1}]) \in  x_{k-1}^{-1}(\Phi^+) \cap  \Phi_{t_k}
\end{align}
which is equivalent to:
\begin{align}
  \label{eq:35}
 x_{k-1}s_{\zeta_1[r_1]}s_{\zeta_2[r_2]}\cdots{s_{\zeta_{j-1}[r_{j-1}]}}(\zeta_{j-1}[r_{j-1}]) \in \Phi^+
\end{align}
Therefore \eqref{eq:29} is equivalent to:
\begin{align}
  \label{eq:36}
  x_{k-1} \underset{\Phi^+}{\rightarrow}  x_{k-1}s_{\zeta_1[r_1]} \underset{\Phi^+}{\rightarrow} x_{k-1}s_{\zeta_1[r_1]}s_{\zeta_2[r_2]}  \underset{\Phi^+}{\rightarrow} \cdots
\underset{\Phi^+}{\rightarrow} x_{k-1}s_{\zeta_1[r_1]}s_{\zeta_2[r_2]}\cdots{s_{\zeta_s[r_s]}} = 
x_{k-1} x_{k-1}^{-1}x_k=x_k
\end{align}
Let us write
\begin{align}
  \label{eq:37}
  x_{k-1} \underset{W_{t_k}/W_{[t_k,t_k+\epsilon)},\Phi^+}{\leq} x_k
\end{align}
for this condition. Remember that implicitly we are choosing $x_{k-1}$ to satisfy the minimal length conditions above.

\subsubsection{Relationship with the Bruhat order on $\WTits$}

\begin{Proposition}
  \label{prop:folding-condition-equiv-to-bruhat-order-condition}
  Let $t \in [0,1)$, $x \in W$. Suppose $\zeta[r] \in \Phi^+_{t} \backslash \Phi^+_{[t,t+\epsilon)}$ (i.e. $\langle \zeta, \lambda \rangle \neq 0$. Then
  \begin{align}
    \label{eq:38}
   x  \underset{\Phi^+}{\rightarrow} x s_{\zeta[r]}
  \end{align}
  if and only if:
  \begin{align}
    \label{eq:39}
   x \pi^\lambda   \underset{\Phi^+}{\leftarrow} x s_{\zeta[r]} \pi^\lambda
  \end{align}
\end{Proposition}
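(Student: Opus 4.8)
The plan is to turn both arrow relations into positivity statements about $\bG$-affine roots and to connect them through a single conjugation identity. Write $s_{\zeta[r]} = \pi^{r\zeta^\vee}s_\zeta$ as in \eqref{eq:46-imported}. The first step is the group-theoretic computation in $W_P = \Wv \ltimes P$
\[
 \pi^{-\lambda}\, s_{\zeta[r]}\, \pi^{\lambda} \;=\; \pi^{(r-\langle\zeta,\lambda\rangle)\zeta^\vee}\, s_\zeta \;=\; s_{\zeta[m]}, \qquad m := r - \langle\zeta,\lambda\rangle,
\]
obtained by moving $\pi^{\lambda}$ past $s_\zeta$ and using $s_\zeta(\lambda) = \lambda - \langle\zeta,\lambda\rangle\zeta^\vee$. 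Consequently $x\, s_{\zeta[r]}\, \pi^{\lambda} = (x\pi^{\lambda})\, s_{\zeta[m]}$, so the two elements of $\WTits$ appearing on the right-hand side of the proposition differ by right multiplication by the reflection $s_{\zeta[m]}$, whose positive root is $\zeta[m] \in \Phi^{+}$.

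The second step pins down the signs of $r$ and $m$. Since $\zeta[r] \in \Phi_{t\lambda}$, the reflection $s_{\zeta[r]}$ fixes $t\lambda$; using the action convention \eqref{eq:025} one computes $s_{\zeta[r]}.(t\lambda) = t\lambda + (r - t\langle\zeta,\lambda\rangle)\zeta^\vee$, which forces $r = t\langle\zeta,\lambda\rangle$. As $\lambda$ is dominant and $\langle\zeta,\lambda\rangle \neq 0$ we have $\langle\zeta,\lambda\rangle > 0$, so $t \in [0,1)$ gives $0 \le r < \langle\zeta,\lambda\rangle$, hence $-\langle\zeta,\lambda\rangle \le m < 0$. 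Thus $\zeta[r]$ has vectorial part $+\zeta$ while $\zeta[m]$ has vectorial part $-\zeta$. A translation fixes the vectorial part of a $\bG$-affine root, and the identity of the first step gives $s_{\pi^{\lambda}(\zeta[m])} = \pi^{\lambda}s_{\zeta[m]}\pi^{-\lambda} = s_{\zeta[r]}$, so $\pi^{\lambda}(\zeta[m]) = \pm\zeta[r]$; comparing vectorial parts forces
\[
 \pi^{\lambda}(\zeta[m]) = -\zeta[r].
\]

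The third step unwinds the arrows. By definition (as in \eqref{eq:30}), $x \underset{\Phi^{+}}{\rightarrow} xs_{\zeta[r]}$ means $x(\zeta[r]) \in \Phi^{+}$. On the other side, $x\pi^{\lambda} \underset{\Phi^{+}}{\leftarrow} xs_{\zeta[r]}\pi^{\lambda}$ means $xs_{\zeta[r]}\pi^{\lambda} \underset{\Phi^{+}}{\rightarrow} x\pi^{\lambda}$; writing $xs_{\zeta[r]}\pi^{\lambda} = (x\pi^{\lambda})s_{\zeta[m]}$ and $x\pi^{\lambda} = \bigl((x\pi^{\lambda})s_{\zeta[m]}\bigr)s_{\zeta[m]}$, this unwinds to $(x\pi^{\lambda})s_{\zeta[m]}(\zeta[m]) \in \Phi^{+}$, i.e.\ $(x\pi^{\lambda})(-\zeta[m]) \in \Phi^{+}$, i.e.\ $(x\pi^{\lambda})(\zeta[m]) \in \Phi^{-}$. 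By the second step, $(x\pi^{\lambda})(\zeta[m]) = x\bigl(\pi^{\lambda}(\zeta[m])\bigr) = x(-\zeta[r]) = -\,x(\zeta[r])$, so this condition is equivalent to $x(\zeta[r]) \in \Phi^{+}$, which is the left-hand side. That completes the equivalence.

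Every individual move here is a routine identity in $W_P$ acting on $\Phi$, so I do not anticipate a serious obstacle; the one place carrying the actual content --- and where I would be most careful with conventions --- is the sign $\pi^{\lambda}(\zeta[m]) = -\zeta[r]$ in the second step. It is precisely this sign that makes the arrow reverse from $\rightarrow$ inside $W$ to $\leftarrow$ inside $\WTits$: the roots $\zeta[r]$ and $\zeta[m] = \zeta[r - \langle\zeta,\lambda\rangle]$ have opposite vectorial parts, and establishing this uses both $t < 1$ and $\langle\zeta,\lambda\rangle \neq 0$. Dropping the dominance hypothesis only changes this to $\langle\zeta,\lambda\rangle < r \le 0$ when $\langle\zeta,\lambda\rangle < 0$, with the symmetric analysis applying.
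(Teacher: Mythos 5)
Your proof is correct and follows essentially the same route as the paper: both hinge on conjugating the reflection across $\pi^{\lambda}$ and observing that, because $r = t\langle\zeta,\lambda\rangle$ with $t<1$ and $\langle\zeta,\lambda\rangle>0$, the affine root governing the step in $\WTits$ is the \emph{negative} of the one governing the step in $W$, which is exactly what reverses the arrow. The only difference is cosmetic --- you compute $\pi^{-\lambda}s_{\zeta[r]}\pi^{\lambda}=s_{\zeta[m]}$ in $W_P$ and pin down the sign $\pi^{\lambda}(\zeta[m])=-\zeta[r]$ by comparing vectorial parts, whereas the paper directly computes $\pi^{-\lambda}(\zeta[r])=\zeta+(t-1)\langle\zeta,\lambda\rangle\pi\in\Phi^{-}$ and unwinds the arrows from there.
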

\begin{proof}
  First $\zeta[r] \in \Phi^+_{t}$ occurs if and only if $r = t \langle \zeta, \lambda \rangle$ and $t \langle \zeta, \lambda \rangle \in \ZZ$. Because $\lambda$ is dominant and $\langle \zeta, \lambda \rangle \neq 0$, we have $\langle \zeta,\lambda \rangle > 0$.
  Recall  $x  \underset{\Phi^+}{\rightarrow} x s_{\zeta[r]}$ means:
  \begin{align}
    \label{eq:43}
    x(\zeta[r]) \in \Phi^+
  \end{align}
  Next, let us write:
  \begin{align}
    \label{eq:44}
  x s_{\zeta[r]} \pi^\lambda = x \pi^\lambda \pi^{-\lambda} s_{\zeta[r]} \pi^\lambda = x \pi^\lambda s_{ \pi^{-\lambda}(\zeta[r]) }
  \end{align}
  We compute, using the fact that $\lambda$ is dominant and that $t < 1$:
  \begin{align}
    \label{eq:45}
    \pi^{-\lambda}(\zeta[r]) = \pi^{-\lambda}(\zeta + t\langle\zeta,\lambda\rangle\pi) = \zeta + (t-1)\langle\zeta,\lambda\rangle\pi \in \Phi^-
  \end{align}
  Therefore \eqref{eq:39} holds if and only if (take careful note of the signs):
  \begin{align}
    \label{eq:46}
x \pi^\lambda (\zeta + (t-1)\langle\zeta,\lambda\rangle\pi) \in \Phi^+
  \end{align}
  We compute:
  \begin{align}
    \label{eq:47}
x \pi^\lambda (\zeta + (t-1)\langle\zeta,\lambda\rangle\pi) =    
x ( \zeta + t \langle\zeta,\lambda\rangle\pi) = x(\zeta[r])
  \end{align}
 Therefore, \eqref{eq:39} holds if and only if \eqref{eq:43} holds, which we have seen is equivalent to \eqref{eq:38}. 
\end{proof}

\begin{Corollary}
  \label{cor:equiv-of-two-ineq}
  For all $k$, we have
  \begin{align}
    \label{eq:40}
  x_{k-1} \underset{W_{t_k}/W_{[t_k,t_k+\eps)},\Phi^+}{\leq} x_k
  \end{align}
  if and only if:
  \begin{align}
    \label{eq:41}
  x_{k-1}\pi^\lambda \underset{W_{t_k-1}/W_{[t_k-1,t_k-1+\epsilon)},\Phi^+}{\geq} x_k\pi^\lambda
  \end{align}
\end{Corollary}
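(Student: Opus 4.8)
The statement is a chain version of the single-reflection equivalence established in Proposition \ref{prop:folding-condition-equiv-to-bruhat-order-condition}, so the plan is simply to apply that Proposition termwise along a saturated chain. First I would unwind the definition \eqref{eq:37}: the relation \eqref{eq:40} holds precisely when there is a sequence of affine roots $\zeta_1[r_1],\ldots,\zeta_s[r_s]\in\Phi^+_{t_k}\setminus\Phi^+_{[t_k,t_k+\eps)}$ (equivalently with $\langle\zeta_i,\lambda\rangle\neq 0$) realizing the chain \eqref{eq:36}; that is, setting $y_0:=x_{k-1}$, $y_j:=y_{j-1}s_{\zeta_j[r_j]}$, $y_s=x_k$, one has $y_{j-1}\underset{\Phi^+}{\rightarrow}y_j$ for each $j$.

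Next I would apply Proposition \ref{prop:folding-condition-equiv-to-bruhat-order-condition}, with $t=t_k$, $x=y_{j-1}$ and $\zeta[r]=\zeta_j[r_j]$, to each step. This converts $y_{j-1}\underset{\Phi^+}{\rightarrow}y_j$ into $y_{j-1}\pi^\lambda\underset{\Phi^+}{\leftarrow}y_j\pi^\lambda$, and by \eqref{eq:44} the element $y_j\pi^\lambda$ equals $(y_{j-1}\pi^\lambda)\,s_{\pi^{-\lambda}(\zeta_j[r_j])}$, so each of these is genuinely a reflection step. Reading the resulting chain $x_{k-1}\pi^\lambda=y_0\pi^\lambda\underset{\Phi^+}{\leftarrow}y_1\pi^\lambda\underset{\Phi^+}{\leftarrow}\cdots\underset{\Phi^+}{\leftarrow}y_s\pi^\lambda=x_k\pi^\lambda$ from right to left then exhibits a chain of $\underset{\Phi^+}{\rightarrow}$-steps from $x_k\pi^\lambda$ up to $x_{k-1}\pi^\lambda$, which is exactly the assertion \eqref{eq:41}. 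The bookkeeping to carry out is: (i) each root used, namely the positive representative of $\pi^{-\lambda}(\zeta_j[r_j])$, lies in $\Phi^+_{t_k-1}$ — this is precisely the content of the computation \eqref{eq:45}, which shows $\pi^{-\lambda}$ carries an affine root fixing $t_k\lambda$ to one fixing $(t_k-1)\lambda$; and (ii) this root is not in $\Phi^+_{[t_k-1,t_k-1+\epsilon)}$, which holds because its vectorial part still pairs nontrivially with $\lambda$. I would also transport the minimal-length normalization of the left endpoint (the hypothesis in \eqref{eq:37} guaranteeing existence of such a chain) through conjugation by $\pi^\lambda$, so that the two sides of \eqref{eq:41} are compared in the sense of \eqref{eq:37}.

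Since Proposition \ref{prop:folding-condition-equiv-to-bruhat-order-condition} is an ``if and only if'', the reverse implication needs no new idea: given a witnessing chain for \eqref{eq:41}, right-multiply by $\pi^{-\lambda}$, run the Proposition backwards on each step, and reverse the order to recover \eqref{eq:40}. The closest thing to an obstacle is purely a matter of careful bookkeeping rather than a substantive difficulty: one must confirm that conjugation by $\pi^\lambda$ matches the parabolic subgroup $W_{[t_k,t_k+\eps)}$ with $W_{[t_k-1,t_k-1+\epsilon)}$, so that the \emph{strictness} implicit in working modulo those parabolics (i.e.\ the requirement $\langle\zeta_i,\lambda\rangle\neq 0$, equivalently $x_{k-1}.\lambda\neq x_k.\lambda$) is preserved on both sides. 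As noted above, this comes down to the single observation that $|\langle\zeta_i,\lambda\rangle|$ is unchanged under the $\pi^{-\lambda}$-twist, so the real content is entirely in the Proposition.
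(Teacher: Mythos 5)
Your proposal is correct and matches the paper's (implicit) argument: the Corollary is stated as an immediate consequence of Proposition \ref{prop:folding-condition-equiv-to-bruhat-order-condition}, obtained exactly as you describe by applying it termwise to the chain defining \eqref{eq:37} and observing via \eqref{eq:45} that $\pi^{-\lambda}$ carries the roots $\zeta_j[r_j]\in\Phi^+_{t_k}\setminus\Phi^+_{[t_k,t_k+\eps)}$ to roots based at $t_k-1$ still pairing nontrivially with $\lambda$. Your bookkeeping points (i) and (ii) are precisely the checks the paper's setup already provides, so nothing further is needed.
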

\noindent Note that \eqref{eq:41} implies a inequality in the Bruhat order on $\WTits$. 

\subsection{Finiteness of the set of $I_\infty$-Hecke paths}
By Corollary \ref{cor:equiv-of-two-ineq}, the data of our Hecke path satisfies:
\begin{align}
  \label{eq:42}
  \pi^\lambda \underset{W_{t_0-1}/W_{[t_0-1,t_0-1+\epsilon)},\Phi^+}{\geq} x_0\pi^\lambda \underset{W_{t_1-1}/W_{[t_1-1,t_1-1+\epsilon},\Phi^+}{\geq} \cdots x_{N-1}\pi^\lambda \underset{W_{t_N-1}/W_{[t_N-1,t_N-1+\epsilon)},\Phi^+}{\geq} x_N\pi^\lambda
\end{align}
The non-redundancy condition \eqref{eq:28} tells us that all but the first inequality is strict:
\begin{align}
  \label{eq:48}
  \pi^\lambda \underset{W_{t_0-1}/W_{[t_0-1,t_0-1+\epsilon)},\Phi^+}{\geq} x_0\pi^\lambda \underset{W_{t_1-1}/W_{[t_1-1,t_1-1+\epsilon},\Phi^+}{>} \cdots x_{N-1}\pi^\lambda \underset{W_{t_N-1}/W_{[t_N-1,t_N-1+\epsilon)},\Phi^+}{>} x_N\pi^\lambda
\end{align}

The endpoint condition \eqref{eq:27} means that
\begin{align}
  \label{eq:49}
 x_N \pi^\lambda = \pi^\nu w 
\end{align}
for some $w \in \Wv$.
Therefore:
\begin{align}
  \label{eq:50}
  \pi^\lambda \geq \pi^\nu w 
\end{align}

\subsubsection{Finiteness of the set of $x_N$}
By \cite[Theorem 3.3]{Muthiah-Orr-2018}, this implies
\begin{align}
  \label{eq:51}
  \ell(\pi^\lambda) \geq \ell(\pi^\nu w)
\end{align}
where $\ell$ is the the length function from \cite{Muthiah-2018} specialized with $\epsilon=1$.

Since $\lambda$ is dominant, $\ell(\pi^\lambda) = \langle 2\rho, \lambda \rangle$, where $\rho$ is the sum of fundamental weights. As we have assumed that $\nu$ is dominant, we have:
\begin{align}
  \label{eq:52}
\ell(\pi^\nu w) = \langle 2\rho, \nu \rangle  + \ell(w)
\end{align}
Therefore, we must have
\begin{align}
  \label{eq:053}
  \ell(w) \leq \langle 2 \rho, \lambda - \nu \rangle
\end{align}
which shows that the set of possible choices for $w$, and hence for $x_N$ is finite.

\begin{Remark}
  The bound \eqref{eq:053} is reminiscent of an estimate due to Joseph. The Joseph estimate plays a key role in showing that spaces of rational maps into Kac-Moody flag varieties of fixed degree are finite-type schemes \cite{Braverman-Finkelberg-Gaitsgory-2006}. It also a key finiteness result on the way to proving the affine Gindikin-Karpelevich formula \cite{Braverman-Garland-Kazhdan-Patnaik}. A similar estimate appears in the proof of the finiteness of $U^-$-Hecke paths \cite{Gaussent-Rousseau-2008}.
\end{Remark}

\begin{Remark}
If we drop the assumption that $\nu$ is dominant, then \eqref{eq:52} shifts by a fixed amount related to the length of the element of $\Wv$ that sends $\nu$ to a dominant weight. In particular, we will still obtain a bound on $\ell(w)$, from which we can make the same finiteness conclusions as above.
\end{Remark}

\subsubsection{Finiteness of $\cH^{\lambda,\nu}_{I_\infty}$}
We have seen that ever element $\tau \in \cH^{\lambda,\nu}_{I_\infty}$ gives rise to a sequence
\begin{align}
  \label{eq:56}
  \pi^\lambda \underset{W_{t_0-1}/W_{[t_0-1,t_0-1+\epsilon)},\Phi^+}{\geq} x_0\pi^\lambda \underset{W_{t_1-1}/W_{[t_1-1,t_1-1+\epsilon},\Phi^+}{>} \cdots x_{N-1}\pi^\lambda \underset{W_{t_N-1}/W_{[t_N-1,t_N-1+\epsilon)},\Phi^+}{>} x_N\pi^\lambda
\end{align}
in the Bruhat order on $\WTits$. By \eqref{eq:26}, the folding times are determined by the set of $x_k$, so the sequence \eqref{eq:56} determines the path $\tau$. Furthermore, we have proved that the set of possible $x_N$ is finite. 

\newcommand{\ASL}{\widehat{\SL}}

In general, Orr and the author have proved that the length of intervals in the double affine Bruhat order are controlled by the length function \cite[Theorem 3.3]{Muthiah-Orr-2018}. Moreover, in the case of untwisted affine ADE we show that covers are characterized by the length functions \cite[Theorem 6.2]{Muthiah-Orr-2018}. Building on that characterization of covers, Welch has given a beautiful explicit classification of covers in the double affine Bruhat order (for untwisted affine ADE) \cite[Theorem 5.4.8]{Welch-2019}. In particular, she shows that the set of covers of a given element of $W_\cT$ is finite \cite[Corollary 5.4.6]{Welch-2019}. Summarizing we have the following.

\begin{Theorem}[\cite{Muthiah-Orr-2018},\cite{Welch-2019}]
  In untwisted affine ADE type, intervals in the double affine Bruhat order are finite. Precisely, given $x,y \in W_{\cT}$ with $x \leq y$, the set of $z \in W_{\cT}$ such that $x \leq z \leq y$ is a finite set.
\end{Theorem}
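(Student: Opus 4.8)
The plan is to obtain the finiteness of intervals by combining the two facts recalled just above: the length-function control of chains in the double affine Bruhat order, \cite[Theorem 3.3]{Muthiah-Orr-2018}, and the finiteness of the set of covers of a fixed element in untwisted affine ADE type, \cite[Corollary 5.4.6]{Welch-2019}. The link between them is a short induction on the length difference.

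First I would record the following consequences of \cite[Theorem 3.3]{Muthiah-Orr-2018} and \cite[Theorem 6.2]{Muthiah-Orr-2018}, together with the fact that the double affine Bruhat order is a genuine partial order \cite{Muthiah-2018}: for $x \leq y$ in $W_{\cT}$ every saturated chain from $x$ to $y$ is finite of length $\ell(y) - \ell(x)$ (with $\ell$ the length function of \cite{Muthiah-2018} specialized at $\epsilon = 1$); hence $x < z$ forces $\ell(x) < \ell(z)$, and whenever $x < z$ there is a cover $x \lessdot z'$ with $z' \leq z$, namely the second vertex of a saturated chain from $x$ to $z$ inside the finite-height interval $[x,z]$.

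Next I would induct on $n = \ell(y) - \ell(x) \geq 0$. If $n = 0$ then $x = y$ and $\{z \mid x \leq z \leq y\} = \{x\}$. If $n \geq 1$, put $C = \{ z' \in W_{\cT} \mid x \lessdot z' \text{ and } z' \leq y \}$; this is contained in the set of covers of $x$, hence finite by \cite[Corollary 5.4.6]{Welch-2019}. Any $z$ with $x < z \leq y$ lies in $\{ z'' \mid z' \leq z'' \leq y \}$ for some $z' \in C$ by the observation above, and $\ell(y) - \ell(z') = n - 1$, so that interval is finite by the inductive hypothesis. Therefore $\{z \mid x \leq z \leq y\} = \{x\} \cup \bigcup_{z' \in C} \{ z'' \mid z' \leq z'' \leq y \}$ is a finite union of finite sets, which closes the induction.

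The substance of the theorem lies entirely in the cited inputs rather than in this argument: \cite[Corollary 5.4.6]{Welch-2019} is the deep statement, resting on Welch's explicit classification of covers \cite[Theorem 5.4.8]{Welch-2019} and, through it, on the characterization of covers by the length function in untwisted affine ADE type \cite[Theorem 6.2]{Muthiah-Orr-2018}, while \cite[Theorem 3.3]{Muthiah-Orr-2018} is what makes every interval finite-height. The inductive bookkeeping assembling them is routine, and I expect no genuine obstacle there; the only point worth stating carefully is that \cite[Theorem 3.3]{Muthiah-Orr-2018} must be invoked in the strong form that bounds lengths of chains, not merely in the form $x \leq y \Rightarrow \ell(x) \leq \ell(y)$.
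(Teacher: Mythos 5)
Your proposal is correct and matches the paper's treatment: the paper offers no written argument, stating the theorem as a direct summary of exactly the inputs you cite (the length-function control of chains from Muthiah--Orr and Welch's finiteness of covers, itself resting on the ADE characterization of covers). Your induction on $\ell(y)-\ell(x)$ is precisely the routine bookkeeping the paper leaves implicit, and it is carried out correctly.
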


Combining this with our calculation above that the set of possible $x_N$ is finite, we therefore deduce our main finiteness result.
\begin{Theorem}
  \label{thm:finiteness-of-I-infty-Hecke-paths}
  In untwisted affine ADE type, the set $\cH^{\lambda,\nu}_{I_\infty}$ is finite.
\end{Theorem}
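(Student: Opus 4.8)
The plan is to reduce the finiteness of $\cH^{\lambda,\nu}_{I_\infty}$ to the finiteness of intervals in the double affine Bruhat order, using the chain \eqref{eq:56} that has already been attached to each $I_\infty$-Hecke path. First I would record the two structural facts established above. By Corollary \ref{cor:equiv-of-two-ineq} together with the non-redundancy condition \eqref{eq:28}, every $\tau \in \cH^{\lambda,\nu}_{I_\infty}$ produces a strictly decreasing chain
\[
\pi^\lambda \geq x_0\pi^\lambda > x_1\pi^\lambda > \cdots > x_N\pi^\lambda
\]
in the Bruhat order on $\WTits$ (refined by the parabolic-quotient data along each step); and by \eqref{eq:26} the folding times $0 = t_0 < t_1 < \cdots < t_{N+1} = 1$ are forced by the requirement $x_{k-1}^{-1}x_k \in W_{t_k}$, so the finite sequence $(x_0,\ldots,x_N)$ determines $\tau$ completely. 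Hence it suffices to bound the number of such sequences.

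Next I would pin down the endpoint. As computed in \eqref{eq:49}--\eqref{eq:053}, the endpoint condition $x_N.\lambda = \nu$ forces $x_N\pi^\lambda = \pi^\nu w$ for some $w \in \Wv$, and then $\pi^\lambda \geq \pi^\nu w$ in $\WTits$ together with the length estimate of \cite[Theorem 3.3]{Muthiah-Orr-2018} gives $\ell(w) \leq \langle 2\rho, \lambda-\nu\rangle$. Since $\Wv$ is a Coxeter group, it has only finitely many elements of bounded length, so there are finitely many possibilities for the element $y := x_N\pi^\lambda \in \WTits$.

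Then I would invoke the interval-finiteness theorem for the double affine Bruhat order (the Theorem of \cite{Muthiah-Orr-2018}, \cite{Welch-2019} quoted just above): in untwisted affine ADE type, for each of the finitely many endpoints $y$ the interval $\{ z \in \WTits \mid y \leq z \leq \pi^\lambda \}$ is a finite set. The intermediate terms $x_k\pi^\lambda$ of the chain \eqref{eq:56} all lie in this finite interval and, by strictness, are pairwise distinct; so each chain is an ordered subset of a fixed finite poset, and there are only finitely many of them. Combining the finiteness of the set of endpoints $y$, the finiteness of the set of chains with a given endpoint, and the fact that each chain determines $\tau$, we conclude that $\cH^{\lambda,\nu}_{I_\infty}$ is finite.

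The genuinely hard inputs are not in this last deduction but upstream: the Joseph-type length estimate of \cite{Muthiah-Orr-2018} bounding the endpoint, and the interval-finiteness of the double affine Bruhat order from \cite{Welch-2019}. The main bookkeeping obstacle within the argument is to be sure that the passage ``$I_\infty$-Hecke path $\leadsto$ chain in $\WTits$'' is faithful --- that is, that the parabolic-quotient folding inequalities \eqref{eq:48} really do imply honest Bruhat inequalities in $\WTits$ (the content of Proposition \ref{prop:folding-condition-equiv-to-bruhat-order-condition} and Corollary \ref{cor:equiv-of-two-ineq}, which uses $t_k < 1$ and the dominance of $\lambda$), and that the folding times are recoverable from the $x_k$ via \eqref{eq:26}. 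Granting those, the present theorem follows as above.
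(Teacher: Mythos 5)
Your proposal is correct and follows essentially the same route as the paper: the chain \eqref{eq:56} attached to each path via Corollary \ref{cor:equiv-of-two-ineq}, the length bound $\ell(w) \leq \langle 2\rho, \lambda-\nu\rangle$ from \cite[Theorem 3.3]{Muthiah-Orr-2018} pinning down finitely many endpoints $x_N\pi^\lambda$, and interval-finiteness of the double affine Bruhat order in untwisted affine ADE type to bound the chains, with the folding times recovered from the $x_k$ via \eqref{eq:26}. Your added remark that strictness makes each chain an ordered subset of a fixed finite interval simply makes explicit the counting step the paper leaves implicit.
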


\subsubsection{The definition of spherical $R$-polynomials}

We now can summarize our calculation as a definition of $R$-polynomials.
\begin{Definition}
  \label{def:spherical-R-poly}
Let $\lambda$ and $\nu$ be dominant coweights. Then we define the \emph{spherical $R$-polynomial} by: 
\begin{align}
  \label{eq:58}
  R^K_{\nu,\lambda} = \sum_{\tau \in \cH^{\lambda,\nu}_{I_\infty}} | I_\infty x_0 \cdot P_{[t_0,t_0+ \epsilon)}/P_{[t_0,t_0+ \epsilon)}| \cdot \prod_{i=1}^N
  |x_{k-1}^{-1} I_\infty x_k \cdot P_{[t_k,t_k+\epsilon)} \cap  P_{(t_{k}-\epsilon,t_k]} \cdot P_{[t_k,t_k+\epsilon)}/ P_{(t_{k}-\epsilon,t_k]}|
\end{align}
\end{Definition}
By Theorem \ref{thm:finiteness-of-I-infty-Hecke-paths}, this is a finite sum in untwisted affine ADE type. Beware that a number of dependencies are being suppressed from the notation to mitigate clutter: the number of folding times $N$, the folding times $(0=t_0 < t_1 < \cdots t_N < t_{N+1} = 1)$, and folding directions $(x_0,\cdots,x_N)$ depend on $\tau$, and the parahoric subgroups depend on $\lambda$.

By our above discussion, specifically Proposition \ref{proposition:existence-of-overline-x-k} and \eqref{eq:23}, we can realize each factor in \eqref{eq:58} as a specific Deodhar problem in the corresponding tangent root system. Therefore, \eqref{eq:58} gives an explicit (if perhaps complicated) combinatorial definition of $R^K_{\nu,\lambda}$.

\subsection{An $\widehat{SL_2}$ example}
\label{sec:an-sl-two-hat-example}
Let us compute $R^K_{\Lambda_0, \Lambda_0+\delta}$ for $\widehat{\SL_2}$. First we need to enumerate the set $\cH^{\Lambda_0+\delta,\Lambda_0}_{I_\infty}$ of $I_\infty$-Hecke paths of shape $\Lambda_0+\delta$ and endpoint $\Lambda_0$. By \eqref{eq:56}, each $I_\infty$-Hecke path will give us a chain in the Bruhat order from $\pi^{\Lambda_0+\delta}$ to $\pi^{\Lambda_0}$ in the Bruhat order. For the purposes of our calculations, it will be convenient to drop the irredundancy condition in \eqref{eq:56}; redundancies will be obvious, so we will not double count. To avoid confusion, we will write $y_0,y_1,\ldots$ (resp. $r_0,r_1,\ldots$) for the folding directions (resp. folding times) where we allow redunancies. Finally, we will restore the $x_0,x_1,\ldots$ (resp. $t_0,t_1,\ldots$) notation when we remove the redundancies.

Using the results of \cite{Muthiah-Orr-2019,Welch-2019} on the Bruhat order, we can explicitly compute the interval in the Bruhat order:
\begin{equation}
  \label{eq:137}
  \text{
  \begin{tikzpicture}[scale=.5]
    \node (A) at (0,0) {$\pi^{\Lambda_0 + \delta}$};
    \node [below left=1cm of A] (B) {$\pi^{\Lambda_0 + \alpha_1}s_0$};
    \node [below right=1cm of A] (C) {$\pi^{\Lambda_0 - \alpha_1}s_1s_0s_1$};
    \node [below left=1cm of B] (D) {$\pi^{\Lambda_0}s_1 s_0$};
    \node [below right=1cm of B] (E) {$\pi^{\Lambda_0 - \alpha_1} s_1s_0$};
    \node [below right=1cm of C] (F) {$\pi^{\Lambda_0}s_0 s_1$};
    \node [below right=1cm of D] (G) {$\pi^{\Lambda_0}s_0$};
    \node [below left=1cm of F] (H) {$\pi^{\Lambda_0}s_1$};
    \node [below left=1cm of H] (I) {$\pi^{\Lambda_0}$};
    \draw (A) -- (B);
    \draw (A) -- (C);
    \draw (B) -- (D);
    \draw (B) -- (E);
    \draw (C) -- (E);
    \draw (C) -- (F);
    \draw (D) -- (G);
    \draw (D) -- (H);
    \draw (E) -- (G);
    \draw (F) -- (G);
    \draw (F) -- (H);
    \draw (G) -- (I);
    \draw (H) -- (I);
  \end{tikzpicture}
}
\end{equation}
One can further verify that the above picture is also a picture of the Bruhat graph, i.e. every pair of elements that differ by a double affine reflection is pictured above.

We therefore count $6$ possible chains:
\begin{enumerate}
\item $\pi^{\Lambda_0 + \delta} > \pi^{\Lambda_0+\alpha_1} s_0 > \pi^{\Lambda_0} s_1 s_0 > \pi^{\Lambda_0}s_0 > \pi^{\Lambda_0}$
\item $\pi^{\Lambda_0 + \delta} > \pi^{\Lambda_0+\alpha_1} s_0 > \pi^{\Lambda_0} s_1 s_0 > \pi^{\Lambda_0}s_1 > \pi^{\Lambda_0}$
\item $\pi^{\Lambda_0 + \delta} > \pi^{\Lambda_0+\alpha_1} s_0 > \pi^{\Lambda_0-\alpha_1} s_1 s_0 > \pi^{\Lambda_0}s_0 > \pi^{\Lambda_0}$
\item $\pi^{\Lambda_0 + \delta} > \pi^{\Lambda_0-\alpha_1} s_1s_0s_1 > \pi^{\Lambda_0-\alpha_1} s_1 s_0 > \pi^{\Lambda_0}s_0 > \pi^{\Lambda_0}$
\item $\pi^{\Lambda_0 + \delta} > \pi^{\Lambda_0-\alpha_1} s_1s_0s_1 > \pi^{\Lambda_0} s_0 s_1 > \pi^{\Lambda_0}s_0 > \pi^{\Lambda_0}$
\item $\pi^{\Lambda_0 + \delta} > \pi^{\Lambda_0-\alpha_1} s_1 s_0 s_1 > \pi^{\Lambda_0} s_0 s_1 > \pi^{\Lambda_0}s_1 > \pi^{\Lambda_0}$
\end{enumerate}
Reading off the double affine reflections that give each step in the above chains (under left multiplication), we get sequences $\beta_1,\beta_2,\beta_3,\beta_4$ of double affine roots.
\begin{enumerate}
\item $\alpha_0,\alpha_1[1],\alpha_1,\alpha_0[1]$
\item $\alpha_0,\alpha_1[1],(\alpha_1+\delta)[1],\alpha_1$
\item $\alpha_0,\alpha_1,\alpha_1[-1],\alpha_0[1]$
\item $\alpha_1+\delta,\alpha_1+2\delta,\alpha_1[-1],\alpha_0[1]$  
\item $\alpha_1 + \delta, \alpha_1[-1], (\alpha_0+\delta)[2], \alpha_0[1]$
\item  $\alpha_1 + \delta, \alpha_1[-1], \alpha_0[1], \alpha_1$
\end{enumerate}

The folding directions are given by $y_0 = s_{\beta_1}, y_1 = s_{\beta_2}s_{\beta_1}, y_2 = s_{\beta_3}s_{\beta_2}s_{\beta_1}, y_3 = s_{\beta_4}s_{\beta_3}s_{\beta_2}s_{\beta_1}$. The folding times $r_0,r_1,r_2,r_3$ are constrained by the condition that $y_{k-1}^{-1}y_k \in W_{t_k}$.   The possible folding times are the following.
\begin{enumerate}
\item $0,\frac{1}{2},1,1$ 
\item $0, \frac{1}{2},1,1$
\item $0,0,\frac{1}{2},1$
\item $0,\bullet,\frac{1}{2},1$
\item $0,\frac{1}{2},\bullet,1$
\item $0,\frac{1}{2},1,\bullet$
\end{enumerate}
Here a $\bullet$ indicates that the condition $y_{k-1}^{-1}y_k \in W_{t_k}$ does not impose any constraint on the folding time $r_k$. These unconstrained folding times do not actually allow any additional freedom because no folding is happening at that time.

The folding times for each chain can be chosen so that $0 \leq r_0 \leq r_1 \leq r_2 \leq r_3 \leq 1$. Therefore, each chain above gives rise to an $I_\infty$-Hecke path. However, the same $I_\infty$-Hecke path arises multiple times. 
The above six chains give rise to only two distinct $I_\infty$-Hecke paths.

\subsubsection{The first $I_\infty$-Hecke path}
Chains (1), (2), and (4) give rise to the following $I_\infty$-Hecke path:
\begin{align}
  \label{eq:118}
 t_0 = 0, t_1 = \frac{1}{2} \\ 
 x_0 = s_0, x_1=\pi^{\Lambda_0}s_1s_0 \pi^{-\Lambda_0 - \delta}
\end{align}
This $I_\infty$-Hecke path contributes a factor of:
\begin{align}
  \label{eq:120}
 |I_\infty P_{[0,\eps)}/P_{[0,\eps)}| \cdot |x_0^{-1} I_\infty x_1 P_{[\frac{1}{2},\frac{1}{2}+\eps)}/P_{[\frac{1}{2},\frac{1}{2}+\eps)} \cap P_{(\frac{1}{2}-\eps,\frac{1}{2}]} P_{[\frac{1}{2},\frac{1}{2}+\eps)}/P_{[\frac{1}{2},\frac{1}{2}+\eps)}|
\end{align}
The first factor is a one dimensional Schubert cell in $P_{0}/P_{[0,\eps)}$. Explicitly, we can choose $i_0 = e_{-\alpha_0}(c_1)$ where $c_1 \in \kk$,  so it contributes a factor of $q$.
A short computations gives $\overline{x_0} = e$. For the second factor we compute $x_0^{-1}x_1 = \pi^{\alpha_0+\delta}s_{\alpha_0+\delta} = s_{-(\alpha_0+\delta)[1]}$. The simple negative roots in the tangent root system $\Phi_{\frac{1}{2}{(\Lambda_0+\delta)}}$ are $-\alpha_1$ and $-(\alpha_0+\delta)[1]$. Moreover, the reflection $s_{(\alpha_0+\delta)[1]}$ does not fix the segment germ $[\frac{1}{2},\frac{1}{2}+\eps)$. Therefore, the second factor is the intersection of a translate of a one-dimensional Schubert cell with the big cell in $P_{\frac{1}{2}}/P_{[\frac{1}{2},\frac{1}{2}+\eps)}$, so it contributes a factor of $q-1$. Explicitly, we can choose $i_1 = x_0 e_{-(\alpha_0+\delta)[1])}(c_2) x_0^{-1} = e_{-\alpha_1[1]}(c_2)$ where $c_2 \in \kk^\times$.
In terms of one-parameter Chevalley subgroups, the paths in the masure retracting onto the $I_\infty$-Hecke path correspond to the following subset of $I_\infty$:
\begin{align}
  \label{eq:121}
 \left\{ e_{-\alpha_0}(c_1) e_{-\alpha_1[1]}(c_2) \suchthat c_1 \in \kk, c_2 \in \kk^\times \right\}
\end{align}

\subsubsection{The second $I_\infty$-Hecke path}
Chains (3), (5), and (6) give rise to the following $I_\infty$-Hecke path:
\begin{align}
  \label{eq:119}
 t_0 = 0, t_1 = \frac{1}{2} \\ 
 x_0 = s_1s_0, x_1=\pi^{\Lambda_0}s_0 \pi^{-\Lambda_0 - \delta}  
\end{align}
The first factor is now a two dimensional Schubert cell, so it contributes a factor of $q^2$. Explicitly, one can choose $i_0 = e_{-\alpha_1}(c_1)e_{-\alpha_0}(c_2)$ where $c_1,c_2 \in \kk$.
One computes $\overline{x_0} = e$, and as in the first case $x_0^{-1}x_1 = \pi^{\alpha_0+\delta}s_{\alpha_0+\delta} = s_{(\alpha_0+\delta)[1]}$. We can choose $i_1 = x_0 e_{-(\alpha_0+\delta)[1])}(c_3) x_0^{-1} = e_{-\alpha_1[-1]}(c_2)$ where $c_2 \in \kk^\times$.
In terms of one-parameter Chevalley subgroups, the paths in the masure retracting onto the $I_\infty$-Hecke path correspond to the following subset of $I_\infty$:
\begin{align}
  \label{eq:123}
 \left\{ e_{-\alpha_1}(c_1)e_{-\alpha_0}(c_2) e_{-\alpha_1[-1]}(c_3)   \suchthat c_1,c_2 \in \kk, c_3 \in \kk^\times \right\}
\end{align}

\subsubsection{Weird phenomena about parabolic quotients}
\label{sec:weird-phenomena-about-parabolic-quotients}

Therefore, we have computed:
\begin{align}
  \label{eq:124}
R^{K}_{\Lambda_0,\Lambda_0+\delta}(q) = q^2(q-1) + q(q-1)
\end{align}
As we will discuss in \S \ref{sec:the-question-of-completions}, based on constructions of double affine Grassmannian slices, one na\"ively expects instead a polynomial of degree $4$. There we explain that this discrepancy should be due to the fact that we are working with the minimal Kac-Moody group as opposed to a completion.

This discrepancy also corresponds to the following unexpected behavior of the double affine Bruhat order. As we proved with Orr in \cite{Muthiah-Orr-2019}, the double affine Bruhat order on $\WTits$ (at least in untwisted affine ADE types) is graded by a length function $\ell : \WTits \rightarrow \ZZ$. Precisely, given $x \leq y$ the length of all maximal chains between $x$ and $y$ is equal to $\ell(y) - \ell(x)$. In particular, we expect that the length function should describe relative dimensions of double affine Schubert varieties in $G^+/I$.

One can also consider the parabolic quotient $\WTits/\Wv$. Generalizing definitions from the Coxeter case, one identifies $\WTits/\Wv$ with minimal length representatives and defines a Bruhat order by restriction to minimal length representatives.
Both $\pi^{\Lambda_0+\delta}$ and  $\pi^{\Lambda_0}$ are minimal length representatives modulo $\Wv$. Looking at \eqref{eq:137}, one computes that the interval between them in 
$\WTits/\Wv$ is exactly 
\begin{align}
  \label{eq:136}
  \pi^{\Lambda_0+\delta} \leftarrow \pi^{\Lambda_0+\alpha_1}s_0 \leftarrow \pi^{\Lambda_0-\alpha_1}s_1 s_0 \leftarrow \pi^{\Lambda_0}
\end{align}
which is only a $3$-term chain, despite the fact that $\ell(\pi^{\Lambda_0+\delta}) - \ell(\pi^{\Lambda_0}) = 4$. In particular, restricting the length function to minimal length representatives \emph{does not} give a grading of the poset $\WTits/\Wv$. 

The weird phenomenon is that $\pi^{\Lambda_0}$ has exactly two covers in the double affine Bruhat order: $\pi^{\Lambda_0}s_0$ and $\pi^{\Lambda_0}s_1$, and both are equivalent to $\pi^{\Lambda_0}$ modulo $\Wv$. This phenomenon does not occur for Coxeter groups because the length function is always a grading for parabolic quotients. In particular it does not occur when $\bG$ is finite type. This seems to indicate that double affine Schubert varieties in $G^+/K$ will have stranger behavior than those in $G^+/I$.

\section{Iwahori versions of $R$-polynomials }
\label{sec:iwahori-versions-of-R-polynomials}

\subsection{Iwahori-Spherical $R$-polynomials}
\label{sec:iwahori-spherical-r-polynomials}

Above we studied $(K \pi^\lambda K \cap I_\infty \pi^\nu K)/K$. Because we are considering $K \pi^\lambda K$ for $\lambda$ in the Tits cone, by the Cartan decomposition we assumed $\lambda$ dominant. The case where $\nu$ is dominant is the most interesting, but the above discussion also carries through for general $\nu$ in the Tits cone. Now we would like to replace $K \pi^\lambda K$ by $I \pi^\mu K$ to compute
\begin{align}
  \label{eq:128}
  (I \pi^\mu K \cap I_\infty \pi^\nu K)/K
\end{align}
where $\mu, \nu \in \overset{\circ}{\TitsCone}$ but not necessarily dominant. 

Let us define the set of {\it $I_\infty$-Hecke paths} of shape $\mu$ and endpoint $\nu$ by:
\begin{align}
  \label{eq:25}
\cH^{\mu,\nu}_{I_\infty} = \{ \rho_{I_\infty}(\phi) \suchthat \phi \in I.[0,\lambda] \textand \rho_{I_\infty}(\phi)(1)=\nu \}
\end{align}
That is, instead of the straight line path from $0$ to $\lambda$, we will consider the straight line path from $0$ to $\mu$. Making this modification will only cause a change at time $t_0=0$. Specifically, the condition that $g \in K$, is now replaced with the condition that $g \in I$. Then the set of choices for $i_0$ in \eqref{eq:018} is given by:
\begin{align}
  \label{eq:62}
 I_\infty x_0 \cdot P_{[0,0+ \epsilon)} /  P_{[0,0+ \epsilon)} \cap I \cdot P_{[0,0 + \epsilon)} /P_{[0,0+ \epsilon)} \subseteq P_{0}/P_{[0,\epsilon)}
\end{align}
We emphasize that here that $P_{[0,\epsilon)} = P_{[0,\epsilon \mu)}$. When $\mu$ is dominant, the set $I \cdot P_{[0,0 + \epsilon)} /P_{[0,0+ \epsilon)}$ is the big cell in $P_{0}/P_{[0,\epsilon)}$, but for general $\mu$ it is a finite-codimensional Schubert cell.

Let us analyze carefully, the condition for the intersection \eqref{eq:62} to be non-empty. Let $\mu_+$ be the dominant translate of $\mu$, and let $w \in \Wv$ be such that $\mu = w(\mu_+)$. Then we have a bijection between \eqref{eq:62} and:
\begin{align}
  \label{eq:64}
 I_\infty x_0 w^{-1} \cdot P_{[0,0+ \epsilon \mu_+)} /  P_{[0,0+ \epsilon\mu_+)} \cap I \cdot w^{-1} P_{[0,0 + \epsilon\mu_+)} /P_{[0,0+ \epsilon\mu_+)} \subseteq P_{0}/P_{[0,\epsilon\mu_+)}
\end{align}
This intersection is non-empty if and only if $w^{-1} \leq_{\Wv/\Wv_{\mu_+}} x_0 w^{-1}$, which explicitly means that there exists a sequence of positive roots $\beta_1, \ldots, \beta_s$ such that $\langle \beta_i , \mu_+ \rangle > 0$,
$x_0 = s_{\beta_N} \cdots s_{\beta_1}$, $w(\beta_1) > 0, ws_{\beta_1}(\beta_2) > 0, \cdots ws_{\beta_1} \cdots s_{\beta_{s-1}}(\beta_s) > 0$.
By a calculation analogous to the proof of Proposition \ref{prop:folding-condition-equiv-to-bruhat-order-condition}, we can conclude that this condition is equivalent to: 
\begin{align}
  \label{eq:68}
\pi^\mu \geq_{W_{t_0-1}/W_{[t_0-1,t_0-1+\eps)}} x_0 \pi^\mu  
\end{align}

Therefore, we see that the definition of $I_\infty$ Hecke paths will exactly as in the spherical case: folding times and folding directions that satisfy the inequalities \eqref{eq:48}. Precisely:
\begin{align}
  \label{eq:117}
  \pi^\mu \underset{W_{t_0-1}/W_{[t_0-1,t_0-1+\epsilon)},\Phi^+}{\geq} x_0\pi^\mu \underset{W_{t_1-1}/W_{[t_1-1,t_1-1+\epsilon},\Phi^+}{>} \cdots x_{N-1}\pi^\mu \underset{W_{t_N-1}/W_{[t_N-1,t_N-1+\epsilon)},\Phi^+}{>} x_N\pi^\mu
\end{align}
As before, we can deduce the finiteness of the set of $I_\infty$ Hecke paths from finiteness of intervals in the double affine Bruhat order. We therefore can make the following definition
\begin{Definition}
  \label{def:iwahori-spherical-R-poly}
Let $\mu,\nu \in \overset{\circ}{\cT}$. We define the \emph{Iwahori-spherical $R$-polynomial} by: 
\begin{align}
  \label{eq:58-iwahori}
  R_{\nu,\lambda} =  \sum_{\tau \in \cH^{\mu,\nu}_{I_\infty}} &| (I_\infty x_0 \cdot P_{[t_0,t_0+ \epsilon)} \cap I \cdot P_{[0,0 + \epsilon)} /P_{[0,0+ \epsilon)})/P_{[t_0,t_0+ \epsilon)}| \cdot \\ &\prod_{i=1}^N
  |(x_{k-1}^{-1} I_\infty x_k \cdot P_{[t_k,t_k+\epsilon)} \cap  P_{(t_{k}-\epsilon,t_k]} \cdot P_{[t_k,t_k+\epsilon)})/ P_{(t_{k}-\epsilon,t_k]}|
\end{align}
\end{Definition}
Again, we realize the factors in the product as explicit Deodhar problems, so we get a combinatorial formula.
\begin{Remark}
Observe that when $\lambda$ is dominant, the indexing sets in the formulas for $R_{\nu,\lambda}$ and $R^K_{\nu,\lambda}$ are both equal to $\cH^{\mu,\nu}_{I_\infty}$, but the terms in the summation are different. In particular, there is no conflict with the notation.
\end{Remark}

\subsection{Iwahori $R$-polynomials}

Let $\pi^\mu w, \pi^\nu v \in \WTits$ with $\mu,\nu \in \overset{\circ}{\cT}$. Then we can try to understand the intersection:
\begin{align}
  \label{eq:72}
 (I \pi^\mu w I \cap I_\infty \pi^\nu v I)/I 
\end{align}
As before, we start with $g \in I$ such that $g \pi^\mu w \in (I \pi^\mu w I \cap I_\infty \pi^\nu v I)/I$.

The computation we are considering is a refinement of the problem of computing $I \pi^\mu K \cap I_\infty \pi^\nu K$ considered in the previous section. First we recall how we understood that problem. Using the straight line path from $0$ to $\mu$, as \S \ref{sec:iwahori-spherical-r-polynomials}, we can find a sequence of folding times $0=t_0 < t_1 < \cdots < t_N < t_{N+1} = 1$ and folding directions $(x_0,\ldots,x_{N})$ such that for all $k \in \{0,\cdots,N\}$
\begin{align}
  \label{eq:69}
  g = i_0 \cdots i_k x_k p_k
\end{align}
where $i_0,\ldots,i_N \in I_\infty$, $x_k \in W$ and $p_k \in P_{[t_k,t_{k+1}]}$. The set of possible folding times and folding directions, $\cH^{\mu,\nu}_{I_\infty}$ is finite.

Now we need to further restrict our attention to \eqref{eq:72}. Let $\fc$ be the local chamber whose fixator is precisely $I$, and let $\Omega = \pi^\mu w(\fc)$. We have $P_\Omega = \pi^\mu w I w^{-1} \pi^{-\mu}$. We can then further decompose:
\begin{align}
  \label{eq:70}
  p_N = x_{N}^{-1} i_{N+1} x_{N+1} p_{N+1}
\end{align}
where $i_{N+1} \in I_{\infty}$, $x_{N}^{-1} x_{N+1} \in W_{1} = W_{\mu}$, and $p_{N+1} \in P_{\Omega}$. Observe that  $p_N \in P_{(1-\epsilon,1]}$. Observe also that we must have $x_{N+1}\pi^\mu w = \pi^\nu v$, so there is no choice for $x_{N+1}$. Fixing one of the finitely many possibilities for $x_N$, the set of possible choices for $i_{N+1}$ is in bijection with the intersection:
\begin{align}
  \label{eq:71}
 (x_{N}^{-1} I_\infty x_{N+1} P_{\Omega} \cap P_{(1-\epsilon,1]} P_{\Omega})/P_{\Omega}
\end{align}
As before, this can be realized as an explicit Deodhar problem. This is a bit different than previous Deodhar problems we have encountered because $P_{(1-\epsilon,1]} P_{\Omega}/P_{\Omega}$ is a translate of a finite dimensional Schubert cell, and $x_{N}^{-1} I_\infty x_{N+1} P_{\Omega}$ is a translate of a finite codimensional Schubert cell. The non-emptiness of \eqref{eq:71} imposes a further condition on $x_N$

We now define an $I_\infty$-Hecke path $\tau$ of shape $\pi^\mu w$ and endpoint $\pi^\nu v$ to be the data of folding times $0=t_0 < t_1 < \cdots < t_N < t_{N+1} = 1$ and folding directions $(x_0,\ldots,x_{N})$ inside $\cH^{\mu,\nu}_{I_\infty}$ such that
\begin{align}
  \label{eq:74}
  (x_{N}^{-1} I_\infty x_{N+1} P_{\Omega} \cap P_{(1-\epsilon,1]} P_{\Omega})/P_{\Omega} \neq \emptyset
\end{align}
where $x_{N+1} = \pi^{v}v w^{-1} \pi^{-\mu}$.
Let $\cH^{\pi^\mu w,\pi^\nu v}_{I_\infty}$ denote the set of (Iwahori) $I_\infty$-Hecke paths of shape $\pi^\mu w$ and endpoint $\pi^\nu v$. We see that $\cH^{\pi^\mu w,\pi^\nu v}_{I_\infty}$ is a subset of $\cH^{\mu,\nu}_{I_\infty}$; in particular, it is a finite set. Now we can define $R$-polynomials.
\begin{Definition}
  \label{def:iwahori-iwahori-R-poly}
Let $\pi^\mu w, \pi^\nu v \in \WTits$ with $\mu,\nu \in \overset{\circ}{\cT}$.
  We define the \emph{(Iwahori) $R$-polynomial} by:
\begin{align}
  \label{eq:58-iwahori-iwahori}
  R_{\pi^\mu w, \pi^\nu v} =  \sum_{\tau \in \cH^{\pi^\mu w,\pi^\nu v}_{I_\infty}} &| (I_\infty x_0 \cdot P_{[0,0+ \epsilon)} \cap I \cdot P_{[0,0 + \epsilon)} )/P_{[0,0+ \epsilon)}| \cdot \\ &\prod_{i=1}^N
  |(x_{k-1}^{-1} I_\infty x_k \cdot P_{[t_k,t_k+\epsilon)} \cap  P_{(t_{k}-\epsilon,t_k]} \cdot P_{[t_k,t_k+\epsilon)})/ P_{[t_{k},t_k+\epsilon)}| \cdot \\ & |(x_{N}^{-1} I_\infty x_{N+1} P_{\Omega} \cap P_{(1-\epsilon,1]} P_{\Omega})/P_{\Omega}|
\end{align}
\end{Definition}
Here, as before, the dependence of the folding times and directions on $\tau$ is notationally suppressed. The subscripts of the parahoric subgroups refer to the straight line path from $0$ to $\mu$, and $P_{\Omega} = \pi^\mu w I w^{-1} \pi^{-\mu}$.

\section{Kazhdan-Lusztig polynomials}
\label{sec:kazhdan-lusztig-polynomials}
\newcommand{\IC}{\mathrm{IC}}
\newcommand{\Tr}{\mathrm{Tr}}

Our next goal is to pass from our Kazhdan-Lusztig $R$-polynomials, which are point-counts, to Kazhdan-Lusztig $P$-polynomials, which are Poincar\'e polynomials of intersection cohomology stalks. The method we will use is exactly the method used by Kazhdan and Lusztig in their original paper on intersection cohomology of Schubert varieties \cite{Kazhdan-Lusztig-1980}. Their method involves non-trivial geometry: the Grothendieck-Lefschetz fixed point theorem, Deligne's theory of weights, and a theorem of Springer about stalks of $\GG_m$-equivariant sheaves on cones. The output however is a purely combinatorial recursive formula for calculating the $P$-polynomials. We recall their method briefly below.

\subsection{The Kazhdan-Lusztig recursion formula}
\label{sec:the-kazhdan-lusztig-recursion-formula}
In this subsection, as in \S \ref{sec:deodhar-problems-and-twinned-buildings}, we will consider a Kac-Moody $\G$ with opposite Borels $\B$ and $\B^-$ considered over $\kk$. Fix elements $v,w$ in the Weyl group with $v \leq w$, and $X^w_v$ denote the transversal slice to the Schubert variety $\overline{\B v \B/\B}$  embedded as a sub variety of the Schubert variety $\overline{\B w \B/\B}$. We consider these objects as varieties over $\kk$. Explicitly, we have:
\begin{align}
  \label{eq:73}
  X^w_v = \overline{\B w \B/\B} \cap \B^- v \B/\B
\end{align}
For every $u$, we also denote:
\begin{align}
  \label{eq:93}
  \overset{\circ}{X^u_v} = \B u \B/\B \cap \B^- v \B/\B
\end{align}
We have a stratification:
\begin{align}
  \label{eq:94}
  X^w_v = \bigsqcup_{u: v \leq u \leq w} \overset{\circ}{X^u_v}
\end{align}

For any irreducible variety, let $\IC(X)$ denote the $\ell$-adic intersection cohomology sheaf of $X$ (for $\ell \neq \text{char}(\kk)$), normalized so that stalks are concentrated in non-negative degrees. Note that under this normalization, Verdier duality causes $\IC(X)$ to be shifted by $2 \dim(X)$ in cohomological degree.

Let $P_{u,w}$ denote the trace of Frobenius acting on the stalk of $\IC(X^w_u)$ at the point $u$. By Springer's theorem \cite{Springer}, the stalk at $u$ with its Frobenius action is canonically isomorphic to the total intersection cohomology $\HH^\bullet(IC(X^w_u))$ of $X^w_u$. Because the stratum  $\overset{\circ}{X^u_v}$ of $X^w_v$ has a transversal slice isomorphic to $X^w_u$, $P_{w,u}$ is also equal to the trace of Frobenius acting on the stalk of $\IC(X^w_v)$ at any point of $\overset{\circ}{X^u_v}$. Therefore, by the Grothendieck-Lefschetz trace formula, we have
\begin{align}
  \label{eq:95}
  \Tr( \HH_c^\bullet(\IC(X^w_v))) = \sum_{u: v \leq u \leq w} |\overset{\circ}{X^u_v}(\kk)| \cdot P_{w,u}
\end{align}
where we have written $\Tr(\cdot)$ to denote the (super)-trace of Frobenius acting on a complex.

In the course of their computation, Kazhdan and Lusztig prove that $P_{w,u}$ is in fact a polynomial in $q$, that the degree of $P_{u,w}$ is less than or equal to $\frac{1}{2}(\ell(w) - \ell(u) - 1)$ for $u < w$, and $P_{w,w} = 1$.  Finally, they show that $P_{u,w}$ is in fact the Poincar\'e polynomial of $\IC(X^w_u)$ at the point $u$. This final conclusion uses the strength of Deligne's theory of weights, and the facts that intersection cohomology of projective varieties is pure and satisfies the Hard Lefschetz theorem. This implies the deep fact that $P_{u,w}$ has positive coefficients.

By Springer's theorem, the compactly supported hypercohomology $\HH_c^\bullet(\IC(X^w_v))$ is canonically isomorphic to the $!$-stalk of $\IC(X^w_v)$ at the point $v$. From this, one computes $\Tr( \HH_c^\bullet(\IC(X^w_v))) = q^{\ell(w)-\ell(y)}\overline{P_{v,w}}$, where $\overline{P_{v,w}}(q) = P_{v,w}(q^{-1})$.

Therefore, formula \eqref{eq:95} can be rewritten equivalently as the familiar recursion for $R$- and $P$-polynomials:
\begin{align}
  \label{eq:96}
  q^{\ell(w)-\ell(v)}\overline{P_{v,w}} - P_{v,w} = \sum_{u: v < u \leq w} R_{v,u} \cdot P_{u,w}
\end{align}
The $R$-polynomials are defined by $R_{v,u}(q) = |\overset{\circ}{X^u_v}(\kk)|$.
The degree bounds imply that there is no cancellation in the left hand side, so \eqref{eq:96} characterizes the Kazhdan-Lusztig $P$-polynomials. Furthermore, the right hand side in

\subsection{The Iwahori case} 
\label{sec:the-iwahori-case}

A close look at the calculation above, we see the essential ingredients are: (1) the Bruhat order that controls the strata that appear in transversal slices, (2) the length function that controls the codimension of strata, and (3) the $R$-polynomials that give the point-counts of strata over finite fields. In the double affine case, the author and Daniel Orr have studied the Bruhat order and length function in detail \cite{Muthiah-2018,Muthiah-Orr-2019}, and the purpose of the present article is to give a definition of $R$-polynomials.

\begin{Conjecture}
  \label{conj:existence-of-transversal-slices-to-schubert-varieties}
  For every $\pi^\mu w, \pi^\nu v \in \WTits$, there exists an affine algebraic variety (defined over $\kk$), with contracting $\GG_m$-action $X^{\pi^\mu w}_{\pi^\nu v}$, whose dimension is $\ell(\pi^\mu w) - \ell(\pi^\nu v)$ and admitting a stratification by open sets
  \begin{align}
    \label{eq:97}
    X^{\pi^\mu w}_{\pi^\nu v} = \bigsqcup_{\pi^\xi u : \pi^\nu v \leq \pi^\xi u \leq \pi^\mu w} \overset{\circ}{X^{\pi^\xi u}_{\pi^\nu v}}
  \end{align}
such that 
\begin{align}
  \label{eq:98}
\overset{\circ}{X^{\pi^\xi u}_{\pi^\nu v}}(\kk) \cong (I \pi^\xi u I \cap I_\infty \pi^\nu v I )/I 
\end{align}
as sets.
Furthermore, each stratum $\overset{\circ}{X^{\pi^\xi u}_{\pi^\nu v}}$ admits a transversal slice isomorphic to $X^{\pi^\mu w}_{\pi^\nu v}$
\end{Conjecture}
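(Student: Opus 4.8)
The plan is to construct $X^{\pi^\mu w}_{\pi^\nu v}$ as an explicit intersection inside a (yet to be built) ind-scheme model of the double affine flag variety $G^+/I$, in direct analogy with the reductive formula \eqref{eq:73}, and then to check that its combinatorial invariants match the data produced in \S\ref{sec:iwahori-versions-of-R-polynomials}. The essential prerequisite is the retraction $\rho_{I_\infty}$, whose construction requires a generalized Birkhoff decomposition for $G^+$ and a theory of twinned masures; this is the subject of forthcoming work with Patnaik and is assumed throughout the present paper. Granting $\rho_{I_\infty}$, the computations of \S\ref{sec:iwahori-versions-of-R-polynomials} already show that $(I \pi^\xi u I \cap I_\infty \pi^\nu v I)/I$ is a finite disjoint union, indexed by $\cH^{\pi^\xi u, \pi^\nu v}_{I_\infty}$, of products of Deodhar sets, each isomorphic to $\AA^a \times \GG_m^b$; this already realizes the stratum $\overset{\circ}{X^{\pi^\xi u}_{\pi^\nu v}}$ as a set and suggests that the total space should be glued from these cells.

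Concretely I would proceed as follows. First, endow the Schubert variety $\overline{I \pi^\mu w I/I}$ with the structure of a finite-type projective $\kk$-scheme; the natural tool is a double affine analogue of the Bott-Samelson resolution, using the correspondence recalled in \S\ref{sec:deodhar-problems-and-twinned-buildings} (after \cite{Contou-Carrere}) between galleries in buildings/masures and points of Bott-Samelson varieties, with the length function $\ell$ of \cite{Muthiah-Orr-2019} cutting out the relevant finite-dimensional piece. Second, realize the opposite cell $I_\infty \pi^\nu v I/I$ as the attracting set of a contracting $\GG_m$-action on this ind-scheme, coming from a cocharacter $\rho$ playing the role of the principal cocharacter; the attracting-set description should follow from the generalized Birkhoff decomposition, and it would automatically equip the intersection $X^{\pi^\mu w}_{\pi^\nu v}$ with the required contracting action and with affineness. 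Third, the stratification \eqref{eq:97} is then inherited from the Bruhat stratification of the Schubert variety, the identification \eqref{eq:98} follows from the point-counting of this paper, and the dimension statement $\dim \overset{\circ}{X^{\pi^\xi u}_{\pi^\nu v}} = \ell(\pi^\xi u) - \ell(\pi^\nu v)$ is obtained by summing the local Deodhar dimensions along a maximal $I_\infty$-Hecke path and invoking the fact, from \cite{Muthiah-Orr-2019}, that $\ell$ grades the double affine Bruhat order. Fourth, for the transversal-slice property I would adapt the Kazhdan-Lusztig local-structure argument: show that near a point of $\overset{\circ}{X^{\pi^\xi u}_{\pi^\nu v}}$ the ambient Schubert variety is $\GG_m$-equivariantly a product of an affine cell, namely the Schubert cell $I \pi^\xi u I/I$, with $X^{\pi^\mu w}_{\pi^\xi u}$, which is again a manifestation of the twinned masure structure at the corresponding point.

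The main obstacle is that essentially none of this ambient geometry currently exists: $G^+$ is only a sub-semigroup of $G$, so quotients cannot be formed naively, and it is not even known that $\overline{I \pi^\mu w I/I}$ is of finite type. Thus the construction of a double affine Bott-Samelson variety resolving the Schubert variety, and the contracting $\GG_m$-action coming from the generalized Birkhoff decomposition, are the crux of the problem; beyond them, everything is controlled by \cite{Muthiah-2018,Muthiah-Orr-2019,Welch-2019} and the computations of the present paper. I would also flag, as a warning informed by the $\widehat{\SL_2}$ computation in \S\ref{sec:an-sl-two-hat-example} and the discussion in \S\ref{sec:weird-phenomena-about-parabolic-quotients}, that the minimal Kac-Moody group may be too small for the $P$-polynomial application: just as the spherical $R^K_{\Lambda_0,\Lambda_0+\delta}$ fails to match the degree-$4$ answer expected from the Braverman-Finkelberg-Nakajima Coulomb branch, the Iwahori variety $X^{\pi^\mu w}_{\pi^\nu v}$ may only be correctly produced after passing to a suitable completion of $\bG$, and comparing the construction against the Coulomb-branch picture in parallel would be a good consistency check.
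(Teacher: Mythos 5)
The statement you are addressing is not proved in the paper at all: it is stated as Conjecture \ref{conj:existence-of-transversal-slices-to-schubert-varieties}, and the author explicitly works around its absence by taking the recursion \eqref{eq:101} as a \emph{definition} of the $P$-polynomials. So there is no paper proof to compare against, and your text should be read as a research plan rather than a proof. Judged as a proof, it has a genuine gap --- in fact the gap is the entire content of the conjecture. Every step that would actually produce the variety is assumed rather than established: the existence of an ind-scheme model of $G^+/I$ (problematic because $G^+$ is only a sub-semigroup), the finite-typeness of the closure $\overline{I\pi^\mu w I/I}$, the existence of a double affine Bott--Samelson resolution, the realization of $I_\infty\pi^\nu v I/I$ as an attracting set of a contracting $\GG_m$-action (which in turn presupposes the generalized Birkhoff decomposition and $\rho_{I_\infty}$, themselves only conjectural in this paper), the affineness of the intersection, and the local product structure giving the transversal-slice property. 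You candidly flag the first two as ``the crux of the problem,'' which is accurate, but it means the argument reduces the conjecture to a list of statements each at least as hard as the conjecture itself.

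Two further points deserve care even within your conditional framework. First, the identification \eqref{eq:98} is not ``already shown'' by \S\ref{sec:iwahori-versions-of-R-polynomials}: the decomposition of $(I\pi^\xi u I\cap I_\infty\pi^\nu v I)/I$ into Deodhar pieces is itself conditional on $\rho_{I_\infty}$ existing with formal properties analogous to $\rho_{U^-}$, as the paper stresses, so it cannot serve as an unconditional input. Second, your dimension argument --- summing local Deodhar dimensions along a maximal $I_\infty$-Hecke path and invoking the grading of the double affine Bruhat order by $\ell$ from \cite{Muthiah-Orr-2019} --- would at best bound the dimension of each piece; you would still need to exhibit a path whose total Deodhar dimension equals $\ell(\pi^\xi u)-\ell(\pi^\nu v)$ and rule out larger contributions, and the $\widehat{\SL_2}$ computation in \S\ref{sec:an-sl-two-hat-example} (where the spherical intersection behaves as if it were $3$-dimensional while the Coulomb-branch slice is $4$-dimensional) is a warning that naive dimension expectations can fail for the minimal group. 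Your closing remark that a completion of $\bG$ may be needed is well taken and consistent with the paper's own caveat that Conjecture \ref{conj:existence-of-transversal-slices-to-schubert-varieties} may require modification; but that observation reinforces that the conjecture, and hence your proposal, remains open.
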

The affine variety $X^{\pi^\mu w}_{\pi^\nu v}$  provides a partial compactification of the set $\overset{\circ}{X^{\pi^\mu w}_{\pi^\nu v}}$, which has a group theoretic nature.

Assuming this conjecture, and assuming that our definition of $R$-polynomials correctly computes the cardinality of $(I \pi^\xi u I \cap I_\infty \pi^\nu v I )/I$, we immediately can apply the Kazhdan-Lusztig argument. As before, one can define $P_{\pi^\nu v, \pi^\mu w}$ to be the Poincar\'e polynomials of the intersection cohomology stalks of
$X^{\pi^\mu w}_{\pi^\nu v}$ at the point $\pi^\nu v$. Then we have
\begin{align}
  \label{eq:99}
  \deg P_{\pi^\nu v, \pi^\mu w} \leq \frac{1}{2} ( \ell(\pi^\mu w) - \ell(\pi^\nu v) - 1)
\end{align}
for all $\pi^\nu v$ and $\pi^\mu w$ with $\pi^\nu v < \pi^\mu w$, and:
\begin{align}
  \label{eq:100}
  P_{\pi^\mu w, \pi^\mu w} = 1
\end{align}
Finally, we have the recursion
\begin{align}
  \label{eq:101}
  q^{\ell(\pi^\mu w)-\ell(\pi^\nu v)}\overline{P_{\pi^\nu v,\pi^\mu w}} - P_{\pi^\nu v,\pi^\mu w} = \sum_{\pi^\xi u: \pi^\nu v < \pi^\xi u \leq \pi^\mu w} R_{\pi^\nu v,\pi^\xi u} \cdot P_{\pi^\xi u,\pi^\mu w}
\end{align}
that together with \eqref{eq:99} and \eqref{eq:100} uniquely characterizes $P_{\pi^\nu v,\pi^\mu w}$. Observe that once we have the existence of the variety of Conjecture \ref{conj:existence-of-transversal-slices-to-schubert-varieties}, the Kazhdan-Lusztig argument automatically would tell us that the coefficients of $P_{\pi^\nu v,\pi^\mu w}$ are positive.

For now, because we do not presently have Conjecture \ref{conj:existence-of-transversal-slices-to-schubert-varieties} available, we instead take \eqref{eq:101} as a definition. Note, however, that this formula may not have a solution because it requires that the coefficients of right hand side of \eqref{eq:101}, as a polynomial in $q$, must satisfy a signed palindrome property. A failure of such a property would indicate that Conjecture \ref{conj:existence-of-transversal-slices-to-schubert-varieties} would need to be modified. Such a modification would not be unthinkable because, as we will explain below, a modification is expected in the spherical case.

\subsubsection{$R$-polynomials and the  matrix of Kazhdan-Lusztig involution}
\label{sec:R-polynomials-and-the-matrix-of Kazhdan-Lusztig-involution}

The classical development of Kazhdan-Lusztig polynomials typically proceeds using the Hecke algebra and the Kazhdan-Lusztig involution.  Our approach described above currently makes no use of the Hecke algebra, but nonetheless one expects that there should be a close relationship with the Iwahori-Hecke algebra of $G$. 

Recall the Iwahori-Hecke algebra $\cH_I$, which consists of $I$-biinvariant complex valued functions on $G^+$ that are supported on finitely many double cosets \cite{Braverman-Kazhdan-Patnaik,BardyPanse-Gaussent-Rousseau-2016}. The double coset decomposition of $G^+$ into $I$-double cosets gives $\cH_I$ the basis $\{T_x\}$ indexed by $x \in W_{\cT}$.  By \cite{Muthiah-2018, BardyPanse-Gaussent-Rousseau-2016}, the structure coefficients of this basis lie in $\ZZ[q,q^{-1}]$. Therefore, we can redefine $\cH_I$ to be the $\ZZ[q,q^{-1}]$-span of the basis $\{T_x\}$.

Define the algebra involution $\overline{\cdot} : \ZZ[q,q^{-1}] \rightarrow \ZZ[q,q^{-1}]$  by $\overline{q} = q^{-1}$. In the special case where $\bG$ is finite type, the algebra $\cH_I$ is isomorphic to the affine Hecke algebra and has a Kazhdan-Lusztig involution $\overline{\cdot} : \cH_I \rightarrow \cH_I$, which is defined on the $\{T_x\}$ basis by 
\begin{align}
  \label{eq:135}
  \overline{T_{\pi^\mu w}} = \sum_{\pi^\nu v \leq \pi^\mu w} \overline{R_{\pi^\nu v, \pi^\mu w}} q^{-\ell(\pi^\nu v)} T_{\pi^\nu v}
\end{align}
and for arbitrary elements by the requirement that it be semilinear with respect to the involution $\overline{\cdot}$ of $\ZZ[q,q^{-1}]$.

In the case where $\bG$ is untwisted affine Kac-Moody type, we do not currently have a definition of the Kazhdan-Lusztig involution, but we can now make sense of the right hand side of \eqref{eq:135}. The definition of the $R$-polynomial is the subject the present paper, the double affine Bruhat order was defined and studied in \cite{Braverman-Kazhdan-Patnaik,Muthiah-2018}, and the definition of the length function $\ell(\cdot)$ was defined and studied in \cite{Muthiah-2018,Muthiah-Orr-2019}. However, the right hand side of \eqref{eq:135} has infinitely many terms. We therefore conjecture that there exist some completion of $\cH_I$ such that the right hand side of \eqref{eq:135} defines an algebra involution. We hope that such a result should also help clarify the complicated combinatorics of $I_\infty$-Hecke paths.

\subsection{Affine Grassmannian slices and Coulomb branches}
\label{sec:affine-Grassmannian-slices-and-Coulomb-branches}

Let us temporarily consider the case when $\bG$ is finite-type. In this case,
one can identify $G/K$ with the $\kk$-points of certain ind-scheme called the \emph{affine Grassmannian}. Let $\lambda, \nu$ be dominant coweights such that $\lambda \geq \nu$. The intersection $(K \pi^\lambda K \cap I_\infty \pi^\nu K)/K$ is a locally closed subvariety of $G/K$ of dimension $\langle 2 \rho, \lambda -\nu \rangle$. The \emph{affine Grassmannian slice} $\overline{\cW}^\lambda_\nu$ is defined to be the one-sided closure:
\begin{align}
  \label{eq:125}
\overline{\cW}^\lambda_\nu = (\overline{K \pi^\lambda K} \cap I_\infty \pi^\nu K)/K = \bigsqcup_{\mu : \lambda \geq \mu \geq \nu} (K \pi^\mu K \cap I_\infty \pi^\nu K)/K 
\end{align}
The variety $\overline{\cW}^\lambda_\nu$ is a an example of a Schubert slice, and is in particular a conical affine varieties. Among Schubert slices, affine Grassmannian slices are quite special and have many special properties and structures. 

\subsubsection{Geometric Satake and the Kato-Lusztig formula}
One does not expect explicit formulas for Kazhdan-Lusztig polynomials in general, but a special feature of affine Grassmannian slices is the following explicit formula for its intersection homology.

Let $P^{K}_{\nu,\lambda}$ be the Poincar\'e polynomial of the intersection homology of $\overline{\cW}^\lambda_\nu$. The Kato-Lusztig formula (\cite{Kato,Lusztig}) says
\begin{align}
  \label{eq:126}
 P^K_{\nu,\lambda}(q) = K_{\nu,\lambda}(q) 
\end{align}
where $K_{\nu,\lambda}(q)$ is the so called $q$-analogue of weight multiplicity. In brief, $K_{\nu,\lambda}(q)$ are the matrix entries of the change of basis matrix between the Hall-Littlewood polynomials and Weyl characters. We mention Viswanath has developed the theory of Hall-Littlewood polynomials in the symmetrizable Kac-Moody case \cite{Viswanath}.

Formula \eqref{eq:126} is considered to have been the first glimpse of the \emph{geometric Satake correspondence}, and we expect its generalization to be a key milestone for the Kac-Moody geometric Satake correspondence.

\subsubsection{Coulomb branch construction of Kac-Moody affine Grassmannian slices}

Recently, Braverman, Finkelberg, and Nakajima have given another construction of affine Grassmannian slices by giving a mathematically rigorous construction of the notion of a Coulomb branch. Precisely speaking, they construct the Coulomb branches of $3d$ $\cN = 4$ gauge theories of cotangent type, a specific class of which is the so called quiver gauge theories. 

Let us now drop our restriction that $\bG$ is finite type, but assume that $\bG$ has symmetric Cartan matrix. The Braverman-Finkelberg-Nakajima Coulomb branch construction produces an affine variety $\overline{\cW}^\lambda_\nu$ of dimension $\langle 2 \rho, \lambda - \nu \rangle$ associated to a pair of dominant coweights $\lambda$ and $\nu$.
In the case that $\bG$ is finite-type, they prove that this is exactly the affine Grassmannian slice considered above.
Therefore, one expects that in Kac-Moody type, $\overline{\cW}^\lambda_\nu$ should give a good definition of \emph{Kac-Moody affine Grassmannian slices}. In affine type, these are usually called \emph{double affine Grassmannian slices}. 

Recent work by Nakajima and Weekes \cite{Nakajima-Weekes} has further extended the construction to many symmetrizable Kac-Moody types allowing one to relax the symmetric condition.

\subsubsection{The question of completions}
\label{sec:the-question-of-completions}

This variety contains a dense open subset $\cW^\lambda_\nu$ that in finite-type is in bijection with  $(K \pi^\lambda  K \cap I_\infty \pi^\nu K)/K$. However, in finite-type we do not expect this is true without considering an appropriate completion of the Kac-Moody group. For example, when $\bG = \widehat{\SL_2}$, $\cW^{\Lambda_0+\delta}_{\Lambda_0}$ is known to be $4$-dimensional, but our calculation in \S \ref{sec:an-sl-two-hat-example} shows that $(K \pi^{\Lambda_0+\delta}  K \cap I_\infty \pi^{\Lambda_0} K)/K$ behaves like a $3$-dimensional variety. We believe this discrepancy is because we are considering only the minimal Kac-Moody group. As we explained in \ref{sec:an-sl-two-hat-example}, this example also exhibits some weird behavior of the parabolic quotient $\WTits/\Wv$.

In finite-type, it is known that coordinate ring of $\overline{\cW}^\lambda_\nu$ quantizes to a non-commutative algebra called the truncated shifted Yangians. One can define the truncated shifted Yangians in Kac-Moody type as well, and they most naturally behave like functions on subsets of a certain completion of the Kac-Moody group. Specifically, they naturally quantize the ``scattering matrix'' model of affine Grassmannian slices \cite[\S 2(xi) ]{Braverman-Finkelberg-Nakajima}, which at first appears to require completions for both the positive and negative parts of the Kac-Moody group. See also \cite{Krylov,Muthiah-Weekes} for further discussion of the scattering matrix model.

\subsubsection{Strange stratifications and correction factors}

In finite-type, one has a stratification
\begin{align}
  \label{eq:127}
  \overline{\cW}^\lambda_\nu = \bigsqcup_{\mu : \lambda \geq \mu \geq \nu} \cW^\mu_\nu
\end{align}
This na\"ive stratification does not work in Kac-Moody type. In affine type A, Nakajima and Takayama \cite[Theorem 7.26]{Nakajima-Takayama} have explicitly described the correct stratification of Coulomb branches. We refer the reader to their paper for the details, but we mention that symmetric powers of algebraic surfaces naturally appear as extra factors in their stratification

Similar strange stratifications also appear for \emph{affine Zastava spaces}. As explained in \cite{Braverman-Finkelberg-Kazhdan}, the strange stratification of affine Zastava spaces is exactly responsible for a correction factor in the affine Gindikin-Karpelevich formula. This correction factor also appears in the computation of the Satake Transform \cite{Braverman-Kazhdan-Patnaik,BardyPanse-Gaussent-Rousseau-2019}. Finally, it appears as a correction factor in an identity for the Poincar\'e series for the Weyl group $\Wv$ \cite{Macdonald,Muthiah-Puskas-Whitehead}. It would be very interesting to explain these strange stratifications in terms of $p$-adic Kac-Moody groups!

\bibliographystyle{amsalpha}
\bibliography{references}

\end{document}